\DeclarePairedDelimiter{\ceil}{\lceil}{\rceil}
\DeclarePairedDelimiter\floor{\lfloor}{\rfloor}
\newcommand{\lebn}
\theoremstyle{plain}
\newtheorem{proposition}[equation]{Proposition}
\newtheorem{theorem}[equation]{Theorem}
\newtheorem{conjecture}[equation]{Conjecture}
\newtheorem{corollary}[equation]{Corollary}
\newtheorem{lemma}[equation]{Lemma}
\theoremstyle{definition}
\newtheorem{remark}[equation]{Remark}
\numberwithin{equation}{section}
\newcommand{\D}{\Delta}
\tikzset{nodc/.style={circle,draw=blue!50,fill=pink!80,inner sep=1.6pt}}
\tikzset{nodr/.style={circle,draw=black,fill=green!50!black,inner sep=2.5pt}}
\tikzset{nodel/.style={circle,draw=black,inner sep=2.2pt}}
\tikzset{nodinvisible/.style={circle,draw=white,inner sep=2pt}}
\tikzset{nodpale/.style={circle,draw=gray,fill=gray,inner sep=1.6pt}}
\tikzset{nod1/.style={circle,draw=black,fill=black,inner sep=1pt}}
\tikzset{nod2/.style={circle,draw=black,fill=blue!75!black,inner sep=1.6pt}}
\tikzset{nodgs/.style={circle,draw=black,dotted,fill=gray,inner sep=1.6pt}}
\tikzset{nod3/.style={circle,draw=black,fill=black,inner sep=1.8pt}}
\tikzset{noddiam/.style={diamond,draw=black,inner sep=2pt}}
\tikzset{nodw/.style={circle,draw=black,inner sep=1.8pt}}
 \def\@textbottom{\vskip \z@ \@plus 10pt}
 \let\@texttop\relax
\begin{document}

\bibliographystyle{plain}

\title[2-distance coloring of planar graphs]{A $2$-distance $(2\D+7)$-coloring of planar graphs}

\author{Zakir Deniz}
\address{Department of Mathematics, D\"uzce University, D\"uzce, 81620, T{\"{u}}rk{\.{\.i}}ye.} 
\email{zakirdeniz@duzce.edu.tr}

\keywords{Coloring, 2-distance coloring, girth, planar graph.}
\date{\today}
\thanks{}
\subjclass[2010]{}

\begin{abstract}

A vertex coloring of a graph $G$ is called a $2$-distance coloring if any two vertices at a distance at most $2$ from each other receive different colors.  
Recently, Bousquet et al. (Discrete Mathematics, 346(4), 113288, 2023)   proved that $2\D+7$ colors are sufficient for the $2$-distance coloring of planar graphs with maximum degree $\D\geq 9$. In this paper, we strengthen their result by removing the maximum degree constraint and show that all planar graphs admit a 2-distance $(2\D+7)$-coloring.
This particularly improves the result of  Van den Heuvel  and McGuinness (Journal of Graph Theory, 42(2), 110-124, 2003).
\end{abstract}
\maketitle

\section{Introduction}

Among the numerous problems and concepts associated with graphs, coloring stands out as a fundamental and extensively studied topic.
A vertex coloring of a graph assigns colors to its vertices so that adjacent vertices receive distinct colors. One particularly interesting variant of vertex coloring is 2-distance coloring, where any two vertices at a distance of $2$ or less have distinct colors. Its motivation arises from the necessity in some real-life problems to assign different colors not only to adjacent vertices but also to those in close proximity \cite{krumke,cormic}.
The concept of 2-distance coloring was first introduced in \cite{kramer-1, kramer-2}, and it has received considerable attention due to the fact that certain problems, such as the Total Coloring Conjecture (see \cite{daniel} for details), can be formulated as a 2-distance coloring of specific graphs.
A comprehensive survey on 2-distance coloring and its related coloring concepts has been presented by Cranston \cite{daniel}.

The smallest number of colors for which graph $G$ admits a 2-distance coloring is known as the 2-distance chromatic number $\chi_2(G)$ of $G$. 
In 1977, Wegner \cite{wegner} proposed the following conjecture.

\begin{conjecture}\label{conj:main}
For every planar graph $G$ with maximum degree $\D$, $\chi_2(G) \leq 7$ if $\Delta=3$, \linebreak $\chi_2(G) \leq \Delta+5$ if $4\leq \Delta\leq 7$, and $\chi_2(G) \leq  \floor[\big]{\frac{3\Delta}{2}}+1$ if $\Delta\geq 8$.
\end{conjecture}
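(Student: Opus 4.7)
Wegner's conjecture is the canonical target of the discharging method in 2-distance coloring, so the plan is to mount a discharging attack on a minimum counterexample, case-split according to $\Delta$. Assume for contradiction that $G$ is a planar graph of maximum degree $\Delta$ violating the conjectured bound $k(\Delta)$, and minimum with this property in, say, $|V(G)|+|E(G)|$. First I would isolate a family of \emph{reducible configurations} --- local substructures such that some vertex $v$ can be deleted, a $2$-distance $k(\Delta)$-coloring of $G-v$ obtained by minimality, and then extended back to $v$ because strictly fewer than $k(\Delta)$ distinct colors appear on the closed second neighborhood of $v$. Next I would assign Euler-type initial charges, for instance $\mu(v)=\deg(v)-4$ and $\mu(f)=\deg(f)-4$, which sum to $-8$ by Euler's formula, and design a redistribution so that the absence of reducible configurations forces every final charge to be non-negative, a contradiction.

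For $\Delta=3$, the square $G^2$ has maximum degree at most $6$, and the task is to $7$-color it; the natural reducible configurations are edges between low-degree vertices and short faces with enough freedom for a greedy color, and the discharging has to absorb the positive charges on $\geq 5$-faces into the deficits at $3$-faces and low-degree vertices. For $4\leq \Delta\leq 7$, the value of $\Delta$ is small enough to permit an essentially exhaustive analysis of neighborhoods of low-degree vertices, with a discharging scheme tailored per value of $\Delta$. For $\Delta\geq 8$, the target is $\lfloor 3\Delta/2\rfloor+1$; here the reducible configurations should involve vertices of small degree whose second neighborhoods contain few high-degree vertices, and the discharging should push charge from high-degree vertices and large faces toward low-degree vertices.

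The main obstacle, and the reason Wegner's conjecture has remained open since 1977, is the regime $\Delta\geq 8$: the bound $\lfloor 3\Delta/2\rfloor+1$ is tight via explicit extremal constructions, so the discharging scheme must have \emph{no slack} at the worst graphs. The strongest general upper bounds known are of the form $\lceil 5\Delta/3\rceil+O(1)$ or $2\Delta+O(1)$ (the latter being precisely what the present paper establishes, in the form $2\Delta+7$), and the leading constant has resisted reduction to $3/2$ for decades. Pushing past this barrier almost certainly requires genuinely new reducible configurations --- for example long paths of medium-degree vertices, or simultaneous reductions at several vertices --- coupled with a much finer discharging scheme; a purely local reducibility analysis appears insufficient. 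The plan above should therefore be read as the canonical attack rather than as a strategy expected to resolve the conjecture outright.
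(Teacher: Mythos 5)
The statement you were asked to prove is Wegner's Conjecture, which is \emph{not} proved in the paper and remains open; the paper cites it only as motivation and then proves the much weaker bound $\chi_2(G)\leq 2\Delta+7$. You correctly recognize this: your ``proposal'' is an outline of the canonical discharging attack, together with an honest and accurate explanation of why that attack has not closed the conjecture (the bound $\lfloor 3\Delta/2\rfloor+1$ is tight, so there is no slack, and purely local reducibility has so far topped out around $2\Delta+O(1)$ or $\lceil 5\Delta/3\rceil+O(1)$). Since the paper contains no proof of this statement to compare against, the right assessment is simply that you have correctly identified the statement as an open conjecture and declined to claim a proof; your high-level description of the discharging framework matches the style of argument the paper actually uses to establish its weaker theorem (minimum counterexample, $\mu(v)=d(v)-4$, $\mu(f)=\ell(f)-4$, reducible configurations plus redistribution), so your understanding of the method is sound.

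One small factual nit: for $\Delta=3$ the case is already settled (Thomassen, and independently Hartke--Jahanbekam--Thomas), and the paper says so; your write-up treats $\Delta=3$ as still needing the same speculative treatment as the open cases. Be careful to separate the resolved regime $\Delta=3$ from the genuinely open regimes $\Delta\geq 4$.
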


Wegner's Conjecture is one of the most popular problems in graph coloring, and it has remained open for many years, except for the case when $\D=3$, which was solved by Thomassen \cite{thomassen} (independently by Hartke et al. \cite{hartke}). The conjecture is also known to be asymptotically true for large values of $\D$, as shown in \cite{amini,havet}.  
For general planar graphs,  Van den Heuvel and McGuinness \cite{van-den} showed that $\chi_2(G) \leq 2\Delta + 25$, while the bound $\chi_2(G) \leq \ceil[\big]{ \frac{5\D}{3}}+78$ was proved by Molloy and Salavatipour \cite{molloy}.
On the other hand, some improved results are presented in \cite{bosquet-max4,bu-zhu-g6,deniz-max5, deniz-g5, deniz-g6} with a certain degree or girth restrictions. In particular, Bousquet et al. \cite{bosquet}   recently showed that $\chi_2(G) \leq 2\D+7$ if $\D\geq 9$,  which significantly improves upon the work in \cite{kry}.

In this paper, we strengthen the result of Bousquet et al. \cite{bosquet} by removing the maximum degree constraint and show that all planar graphs admit a 2-distance $(2\D+7)$-coloring, which particularly improves the best-known bound of $2\Delta+25$ provided by  Van den Heuvel  and McGuinness \cite{van-den}.

\begin{theorem}\label{thm:main}
For every planar graph $G$, we have $\chi_2(G) \leq 2\D+7$. 
\end{theorem}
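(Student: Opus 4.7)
The plan is to argue by contradiction via a minimum counterexample combined with the discharging method. Suppose $G$ is a planar graph with $\chi_2(G)>2\Delta+7$ that minimizes $|V(G)|+|E(G)|$. Since Bousquet et al.\ already establish the bound when $\Delta\geq 9$, we may restrict to the range $\Delta(G)\leq 8$; the task then is to rule out any such minimum counterexample.

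First I would build a list of \emph{reducible configurations}: local substructures whose presence in $G$ would let one delete (or suitably modify) a small piece, apply minimality to obtain a $2$-distance $(2\Delta+7)$-coloring of the smaller graph, and then extend back to $G$. The baseline comes from greedy extension: a vertex $v$ of degree $k$ has at most $k\Delta$ vertices within distance $2$, so whenever $k\Delta<2\Delta+7$ a free color is guaranteed; this already forces $\delta(G)\geq 3$ and, when $\Delta\leq 6$, kills degree-$3$ vertices too. For $\Delta\in\{7,8\}$ the inequality becomes tight or fails, and the reductions must either produce a non-greedy extension (for example, by a Kempe-style swap in the second neighborhood) or forbid richer local configurations such as two adjacent low-degree vertices, a $3$-vertex whose neighbors share common neighbors, a $3$-vertex on a short face, and so on.

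Second I would run a discharging argument on a fixed planar embedding of $G$. Assign the standard initial charges $\mu(v)=d(v)-4$ on vertices and $\mu(f)=\ell(f)-4$ on faces, so that by Euler's formula
\[
\sum_{v\in V(G)}\mu(v)+\sum_{f\in F(G)}\mu(f)=-8.
\]
I would then design discharging rules sending charge from $\Delta$-vertices and long faces toward $3$-vertices, $4$-vertices, and triangles, tuned so that the reducible configurations from the first step guarantee each vertex and face finishes with nonnegative charge. The contradiction with the negative total completes the proof.

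The main obstacle is the tightness of the bound for the small regime $\Delta\in\{7,8\}$: here the naive estimate $3\Delta\leq 2\Delta+7$ fails, so a degree-$3$ vertex cannot be handled by plain deletion, and the Bousquet et al.\ reductions (which had the slack of $\Delta\geq 9$) do not directly transfer. I expect the crux of the argument to consist of (i) identifying a short enough list of additional reducible configurations around low-degree vertices that exploit color repetition in the second neighborhood, and (ii) matching those configurations with a unified discharging scheme that succeeds simultaneously across all $\Delta\leq 8$. Getting (i) and (ii) to close on each other, without losing the gains already encoded in Bousquet et al.'s $\Delta\geq 9$ framework, will be the delicate part.
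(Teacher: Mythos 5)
Your roadmap matches the paper's framework: minimal counterexample with $|V|+|E|$ minimized, reduction to small maximum degree via Bousquet et al., a catalogue of reducible configurations, and discharging on the initial charges $\mu(v)=d(v)-4$, $\mu(f)=\ell(f)-4$ with the Euler-formula total $-8$. But the sketch leaves essentially all of the difficulty untouched, and the few concrete claims you do make are off in ways that matter. First, the remaining range is not all of $\Delta\le 8$: the paper also cites Zou--Han--Lai and Deniz to dispose of $\Delta\le 5$, so the new work is confined to $\Delta\in\{6,7,8\}$ — and, contrary to your suggestion that $\Delta\le 6$ is finished off by greedy counting, the $\Delta=6$ case is genuinely hard and occupies a full third of the argument. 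Second, greedy counting does not ``kill degree-$3$ vertices'' when $\Delta\le 6$. Deleting a $3$-vertex $v$ can put two of its former neighbours at distance $>2$, so a $2$-distance coloring of $G-v$ need not extend to $G$; the natural fix of adding edges between $v$'s neighbours can raise a neighbour's degree above $\Delta$, which ruins the minimality hypothesis (a $(2\Delta(G')+7)$-coloring is useless if $\Delta(G')>\Delta(G)$). The paper does not delete $3$-vertices for $\Delta=6$; it keeps them and constrains them structurally in Lemma~\ref{6lem:3-vertex} (no incident triangle, at most one $4$-face, all neighbours $6(4^-)$), then feeds that into discharging.

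Beyond these misreadings, the entire substance of the proof is absent. The heart of the argument is three separately tailored lists of reducible configurations (Lemmas~\ref{6lem:3-vertex}--\ref{6lem:6-5-vertex}, \ref{7lem:3-vertex}--\ref{7lem:7-vertex}, \ref{8lem:3-vertex}--\ref{8lem:7-vertex}) paired with three distinct rule sets; there is no single scheme that ``succeeds simultaneously across all $\Delta\le 8$.'' The reductions are also not Kempe-chain swaps: the paper's only tool is the construction of a smaller \emph{proper} graph $G'$ by deleting a vertex and adding carefully chosen edges that simultaneously preserve planarity, maximum degree, and every distance-$2$ pair, followed by greedy extension — the $\Delta$-preservation constraint is precisely what generates many of the structural lemmas and is exactly the subtlety your greedy estimate glosses over. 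In short, the architecture you describe is the right one, but as written the proposal proves nothing, and where it ventures specifics it misidentifies both the case split and the mechanism.
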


All graphs in this paper are assumed to be simple, and we refer to \cite{west} for terminology and notation not defined here. When $G$ is a graph, we use $V(G),E(G),F(G),$ and $\D(G)$ to denote the vertex, edge and face set, and the maximum degree of $G$, respectively. If there is no confusion in the context, we abbreviate $\D(G)$ to $\D$. For a positive integer $t$, we denote by  $[t]$ the  set of integers between $1$ and $t$. Given a planar  graph $G$, 
we denote by $\ell(f)$ the \emph{length} of a face $f$ and by $d(v)$ the \emph{degree} of a vertex $v$. 
A \emph{$k$-vertex} is a vertex of degree $k$. A \emph{$k^{-}$-vertex} is a vertex of degree at most $k$ while a \emph{$k^{+}$-vertex} is a vertex of degree at least $k$. A \emph{$k$ ($k^-$ or $k^+$)-face} is defined analogously as for the vertices, where the \emph{degree} of a face is the number of edges incident to it. A vertex $u\in N(v)$ is called \emph{$k$-neighbour} (resp. \emph{$k^-$-neighbour}, \emph{$k^+$-neighbour}) of $v$ if $d(u)=k$ (resp. $d(u)\leq k$, $d(u)\geq k$).  

For a vertex $v\in V(G)$, we use $n_i(v)$ to denote the number of $i$-vertices adjacent to $v$. 
We denote by $d(u,v)$ the \emph{distance} between $u$ and $v$ for a pair $u,v\in V(G)$. Also, we set $N_i(v):=\{u\in V(G) \ | \  1 \leq d(u,v) \leq i \}$ for $i\geq 1$, so $N_1(v)=N(v)$ and let $d_2(v)=|N_2(v)|$.
For $v \in V(G)$, we use $m_k(v)$ to denote the number of $k$-faces incident with $v$.
A $k$-vertex $v$ with $m_3(v)=d$ is called \emph{$k(d)$-vertex}. In particular, a $k$-vertex $v$ is called  \emph{$k(d^-)$-vertex}   (resp. \emph{$k(d^+)$-vertex}) if  $m_3(v)\leq d$ (resp. $m_3(v)\geq d$). Two faces $f_1 $ and $f_2$ are said to be \emph{adjacent} if they share a common edge.


\section{The Proof of Theorem \ref{thm:main}} 

Let $G$ be a minimal counterexample to Theorem~\ref{thm:main} such that 
$|V(G)| + |E(G)|$ is minimum.  
Thus, $G$ does not admit any $2$-distance $(2\Delta + 7)$-coloring, 
whereas every planar graph $G'$ obtained from $G$ with 
$|V(G')| + |E(G')| < |V(G)| + |E(G)|$ admits a $2$-distance $(2\Delta + 7)$-coloring.  
By the results of \cite{bosquet, deniz-max5}, it suffices to consider the case 
$6 \le \Delta \le 8$.  
Clearly, $G$ is connected.

%

\smallskip

We call a graph $H$ \emph{proper} with respect to $G$ if $H$ is obtained from $G$ 
by deleting some edges or vertices and adding some edges, in such a way that 
every pair of vertices $x_1, x_2 \in V(G) \cap V(H)$ that have distance at most $2$ in $G$ 
also have distance at most $2$ in $H$.  
If $f$ is a $2$-distance coloring of such a graph $H$, then $f$ can be extended to a 
$2$-distance coloring of $G$, provided that each remaining uncolored vertex has an 
available color.  
By the minimality of $G$, we obtain the following.

\begin{remark}\label{rem:proper}
If $H$ is proper with respect to $G$ such that $ V(G)\setminus V(H) =\{v\}$, then  $d_2(v) \geq  2\Delta + 7$.
\end{remark}

\begin{proof}
Assume, for a contradiction, that there exists a vertex \(v \in V(G) \setminus V(H)\) 
such that \(d_2(v) \le 2\Delta + 6\).  
Since \(H\) is proper with respect to \(G\), by the minimality of \(G\), the graph
\(H\) admits a \(2\)-distance coloring using \(2\Delta + 7\) colors.  
As \(d_2(v) \le 2\Delta + 6\), there is at least one available color for \(v\), 
and thus the coloring of \(H\) extends to a \(2\)-distance coloring of \(G\), 
a contradiction to the minimality of \(G\).
\end{proof}

For a vertex $v$ with $d(v)=k$, let $v_1, v_2, \ldots, v_k$ denote the neighbors of $v$
in clockwise order, and let $f_1, f_2, \ldots, f_k$ be the faces incident to $v$.
Define $E(v)=\{ v_i v_{i+1} \in E(G) : i \in [k] \,\}$, where indices are taken modulo $k$.
Let $t(v)$ denote the number of edges in $E(v)$ that are contained in two $3$-faces.
We begin with a useful lemma that provides an upper bound on $d_2(v)$.

\begin{lemma}\label{lem:d2v}
For any vertex $v$, we have $d_2(v)\leq \left(\sum_{u\in N(v)}d(u)\right)-2m_3(v)-m_4(v)-t(v)$. 
\end{lemma}

\begin{proof}
Recall that $d_2(v)$ counts the number of vertices at distance at most $2$ from $v$, 
that is, $d_2(v) = |N_2(v)|$.  
Let $p = \sum_{u \in N(v)} d(u)$.  
From $p$, we subtract $2$ for each $3$-face incident to $v$ and $1$ for each $4$-face 
incident to $v$.  
Furthermore, for each edge $v_i v_{i+1}$, if $v_i$ and $v_{i+1}$ have a common neighbor 
other than $v$, we subtract an additional $1$ from $p$.  
Consequently, we obtain
$d_2(v) \le p - 2m_3(v) - m_4(v) - t(v).$
\end{proof}

In the remainder of the paper, we divide the proof into three cases according to 
\( \Delta \in \{6,7,8\} \). In each case, we employ the same overall strategy: 
we first derive structural properties from the minimality of \(G\), and then 
introduce an appropriate set of discharging rules and apply the discharging 
method to reach a contradiction, thereby showing that \(G\) cannot exist.

\subsection{The case \texorpdfstring{$\D=6$}{D6} } \label{sub:6}~~\medskip

Recall that $G$  does not admit a $2$-distance $19$-coloring, whereas 
every planar graph $G'$ obtained from $G$ with a smaller value of $|V(G')|+|E(G')|$ admits one.

\begin{lemma}\label{6lem:min-deg-4}
$\delta(G)\geq 3$. 
\end{lemma}
\begin{proof}
If $v$ is a vertex of degree at most $2$, then $d_2(v)\leq 12$. Let $G'$ be the graph obtained from $G-v$ by adding an edge between the vertices in $N(v)$.  Notice that $G'$ is proper with respect to $G$. By Remark \ref{rem:proper}, this yields a contradiction.
\end{proof}

\begin{lemma}\label{6lem:3-vertex}
Let $v$ be a $3$-vertex. Then $m_3(v)=0$, $m_4(v)\le 1$, and every neighbor of $v$ is a $6(4^-)$-vertex.
\end{lemma}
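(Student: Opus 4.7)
The plan is to derive each of the three conclusions by contradiction with the minimality of $G$, using the template of Lemma~\ref{6lem:min-deg-4}. Whenever a conclusion fails, I will construct a planar graph $G'$ that is proper with respect to $G$, has $\Delta(G') \leq 6$, and satisfies $|V(G')|+|E(G')| < |V(G)|+|E(G)|$; the minimality of $G$ then yields a $2$-distance $19$-coloring of $G'$, legitimate because $\chi_2(G') \leq 2\Delta(G')+7 \leq 19$. Since any $3$-vertex has $d_2(v) \leq 3 + 3\cdot 5 = 18 < 19$, the coloring of $G'$ extends to a $2$-distance $19$-coloring of $G$, contradicting the choice of $G$.

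For $m_3(v) = 0$, I will assume that $vv_1v_2$ is a $3$-face (so $v_1v_2 \in E(G)$) and let $G' = G - v$, adding the edge $v_1v_3$ inside the merged face at $v$ if $v_1v_3 \notin E(G)$. Then $v_1$ is a common neighbour of $v_2, v_3$ in $G'$, so $G'$ is proper. The at most one added edge has endpoints $v_1, v_3$, each of which loses the neighbour $v$, so $\Delta(G') \leq 6$. For $m_4(v) \leq 1$, I will assume two $4$-faces, say $f_{12} = vv_1xv_2$ and $f_{23} = vv_2yv_3$; then $x$ and $y$ serve as common neighbours of $(v_1, v_2)$ and $(v_2, v_3)$ in $G - v$, and I take $G' = G - v$, adding $v_1v_3$ only if $(v_1, v_3)$ are non-adjacent in $G$ and have no common neighbour in $G - v$. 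The degree check is identical.

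For the third claim I will proceed in two substeps. First, to show each neighbour is a $6$-vertex, I assume $d_G(v_1) \leq 5$ and use $v_1$ as a pivot: I form $G'$ by deleting $v$ and adding both $v_1v_2$ and $v_1v_3$ wherever these edges are absent from $G$; then $v_1$ is a common neighbour of $v_2, v_3$ in $G'$, its degree is at most $(d_G(v_1) - 1) + 2 \leq 6$, and $v_2, v_3$ each gain at most one neighbour, so $\Delta(G') \leq 6$. Second, for $m_3(v_i) \leq 4$ at each $6$-vertex neighbour $v_i$, I will observe that the two faces at $v_i$ containing the edge $vv_i$ coincide with two of the three faces at $v$ and therefore have length $\geq 4$ by $m_3(v) = 0$, so at most $6 - 2 = 4$ of the six faces at $v_i$ can be $3$-faces.

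The main obstacle will be maintaining $\Delta(G') \leq 6$ after the edge additions, since otherwise minimality would only provide a $(2\Delta(G')+7)$-coloring with $2\Delta(G')+7 > 19$, which need not extend. Each edge-addition strategy is chosen precisely to respect this degree bound: reusing the existing $3$-face edge in the first step, exploiting the $4$-face interior vertices in the second, and pivoting on a low-degree neighbour in the third. The last substatement is purely structural (no reduction is needed) and follows directly from the already-established property $m_3(v) = 0$.
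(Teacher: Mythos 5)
Your proof is correct and follows essentially the same approach as the paper's: delete $v$ and add back the necessary chord(s) among $N(v)$ to keep $G'$ proper, then extend a $19$-coloring of $G'$ to $v$ using that $d_2(v)\leq 18$, while the exclusion of $6(5^+)$-neighbours is obtained structurally from $m_3(v)=0$. The only cosmetic differences are that you use the uniform bound $d_2(v)\leq 18$ where the paper computes the sharper $16$ or $17$, you add the chord only when absent, and you make the requirement $\Delta(G')\leq 6$ explicit (the paper leaves that condition implicit), none of which changes the argument.
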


\begin{proof}
Assume first, for a contradiction, that $v$ is incident to a $3$-face, say $f_1 = v_1 v v_2$.  
By Lemma~\ref{lem:d2v}, we have $d_2(v) \le 16$.  
If we set $G' = G - v + \{v_1 v_3\}$, then $G'$ is proper with respect to $G$, 
yielding a contradiction to Remark~\ref{rem:proper}.

Suppose next that $m_4(v) \ge 2$, that is, $v$ is incident to two $4$-faces, 
say $f_1 = v_1 v v_2 x$ and $f_2 = v_2 v v_3 y$.  
Again, $d_2(v) \le 16$.  
If we set $G' = G - v + \{v_1 v_3\}$, then $G'$ is proper with respect to $G$, 
giving the same contradiction as above.

We now show that every neighbor of $v$ is a $6(4^-)$-vertex.  
Since $m_3(v)=0$, the vertex $v$ cannot have a $6(5^+)$-neighbor.  
Moreover, $v$ cannot have a $5^-$-neighbor.  
Indeed, suppose $v_1$ is such a neighbor.  
Let $G' = G - v + \{v_1 v_2, v_1 v_3\}$.  
In this case $d_2(v) \le 17$, and $G'$ is proper with respect to $G$, 
again contradicting Remark~\ref{rem:proper}.  
Hence every neighbor of $v$ must be a $6(4^-)$-vertex.
\end{proof}

We now introduce some terminology for vertices of special types.
A $4(1)$- or $4(2)$-vertex is called a \emph{bad $4$-vertex}.  
Similarly, a $5(4)$- or $5(5)$-vertex is called a \emph{bad $5$-vertex}.

\begin{lemma}\label{6lem:4-vertex-m3v}
If $v$ is a $4$-vertex, then $m_3(v) \le 2$, and $v$ has no $k(k)$-neighbour for any $k\leq 6$.
\end{lemma}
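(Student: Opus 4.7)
The plan is to argue each claim by contradiction in the reducibility style of Lemmas \ref{6lem:min-deg-4} and \ref{6lem:3-vertex}: assume the property fails, construct a planar graph $G'$ with $|V(G')|+|E(G')|<|V(G)|+|E(G)|$ that is proper with respect to $G$, invoke minimality to obtain a 2-distance $19$-coloring of $G'$, and extend it to $G$ by assigning $v$ a color avoided by $N_2(v)$. The workhorse inequality throughout is
\[
d_2(v)\ \le\ \sum_{i=1}^{d(v)} d(v_i)\ -\ 2\bigl|E(G[N(v)])\bigr|,
\]
sharpened when necessary by tracking common neighbors among the $v_i$.

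For the first claim, I would assume $m_3(v)\ge 3$. Among the four faces incident to $v$, the non-triangular one is at most a single face, so in the cyclic order around $v$ the three triangles are consecutive; after relabeling, $v_1v_2$, $v_2v_3$, and $v_3v_4$ are edges. I would set $G'=G-v$ and, if $v_1v_4$ is not already an edge, add $v_1v_4$ inside the remaining face incident to $v$ (whose boundary contains both $v_1$ and $v_4$), keeping $G'$ planar and strictly smaller. Every pair in $N(v)$ is at distance at most $2$ in $G'$ via the path $v_1v_2v_3v_4$ or via the edge $v_1v_4$, so $G'$ is proper. With $\sum d(v_i)\le 24$ and $|E(G[N(v)])|\ge 3$, the displayed bound gives $d_2(v)\le 18$, so at least one color in $[19]$ remains available for $v$.

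For the second claim, I would suppose $v$ has a $6(6)$-neighbour $u$ and label the neighbours of $u$ cyclically as $v=u_1,u_2,\ldots,u_6$ with every $u_iu_{i+1}$ an edge (indices mod $6$). Then $u_2,u_6\in N(v)$, and $v$ has a fourth neighbour $y$. I would take $G'=G-v$ and, if $uy$ is not already present, add $uy$ inside the face formed when the four faces around $v$ merge upon deletion; both $u$ and $y$ lie on the boundary of this merged face, so planarity is preserved. Properness follows since every pair in $N(v)=\{u,u_2,u_6,y\}$ is within distance $2$ in $G'$ by routing through $u$. Exploiting the overlaps $u_3\in N(u)\cap N(u_2)$ and $u_5\in N(u)\cap N(u_6)$, I would obtain
\[
|N_2(v)\setminus N[v]|\ \le\ 3+\bigl(d(u_2)-2\bigr)+\bigl(d(u_6)-2\bigr)+\bigl(d(y)-1\bigr)-2,
\]
and hence $d_2(v)\le d(u_2)+d(u_6)+d(y)\le 18$; the subcases $y\in\{u_3,u_4,u_5\}$ only sharpen the estimate because they force additional edges into $G[N(v)]$.

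The chief difficulty I expect is the bookkeeping in the second part: I must verify that $uy$ can always be added planarly after deleting $v$ (which reduces to observing that both $u$ and $y$ lie on the boundary of the merged face), and separately check the subcases $y\in\{u_3,u_4,u_5\}$ to ensure $G'$ stays planar (usually no added edge is needed at all) and that the count $d_2(v)\le 18$ still goes through. Part 1 is a direct warm-up that closely mirrors the argument already given in Lemma \ref{6lem:3-vertex}.
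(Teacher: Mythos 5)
Your proposal is correct and follows essentially the same reduction as the paper: for the first claim you delete $v$ and add $v_1v_4$ after observing $d_2(v)\le 18$; for the second you identify the $6(6)$-neighbour $u$, observe that both faces through $vu$ are triangles and both $uu_2,uu_6$ lie in two triangles, then delete $v$ and add $uy$ (the paper's $v_2v_4$) and again obtain $d_2(v)\le 18$. The paper states $d_2(v)\le 18$ more tersely while you spell out the counting inequality $d_2(v)\le\sum_i d(v_i)-2|E(G[N(v)])|$ together with the overlap correction, but the underlying argument and the choice of proper graph $G'$ are the same.
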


\begin{proof}
Let $v$ be a $4$-vertex.  
Assume, for a contradiction, that $m_3(v) \ge 3$.  
Let $f_i = v_i v v_{i+1}$ for $i \in [3]$.  
By Lemma~\ref{lem:d2v}, we have $d_2(v) \le 18$.  
If we define $G' = G - v + \{v_1 v_4\}$ (assuming $v_1 v_4 \notin E(G)$), then $G'$ is proper with respect to $G$.  
This contradicts Remark~\ref{rem:proper}.

Now suppose that $v$ has a $k(k)$-neighbour for some $k\leq 6$.  
Without loss of generality, assume that $v_2$ is a $k(k)$-vertex.  
Then $v$ must be incident with two consecutive $3$-faces $f_1$ and $f_2$, say  
$f_1 = v_1 v v_2$ and $f_2 = v_2 v v_3$.  
Moreover, each of the edges $v_1v_2$ and $v_2v_3$ must be contained in two $3$-faces.  
Hence $d_2(v) \le 18$ by Lemma~\ref{lem:d2v}.  
If we set $G' = G - v + \{v_2 v_4\}$, then $G'$ would again be proper with respect to $G$, giving the same contradiction.
\end{proof}

\begin{lemma}\label{6lem:4-vertex-has-no-5(4)-neigh}
Let $v$ be a $4(1)$-vertex. If $v$ has a $4$-neighbour, then the other neighbours of $v$ are neither $4$-vertices nor bad $5$-vertices.
\end{lemma}

\begin{proof}
Suppose that $v$ has a $4$-neighbour, say $v_1$, and assume for a contradiction that
$v$ has another neighbour that is either a $4$-vertex or a $5(4^+)$-vertex.
In this case, Lemma~\ref{lem:d2v} implies $d_2(v) \le 18$.
If we form
$  G' = G - v + \{ v_1 v_2,\, v_1 v_3,\, v_1 v_4 \}$,
then $G'$ is proper with respect to $G$.
By Remark~\ref{rem:proper}, this yields a contradiction.
\end{proof}

\begin{lemma}\label{6lem:4-1-vertex}
Let $v$ be a $4(1)$-vertex.  
If $m_4(v)=r$ for $0 \le r \le 3$, then $n_6(v) \ge r+1$.
\end{lemma}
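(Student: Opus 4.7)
The plan is to argue by contradiction. Suppose $n_6(v) \leq r$, and let $f_1 = vv_1v_2$ be the unique $3$-face incident to $v$, with $r$ of the remaining faces $f_2, f_3, f_4$ being $4$-faces. At most $r$ of the four neighbours of $v$ have degree $6$, so
\[
\sum_{i=1}^4 d(v_i) \leq 6r + 5(4-r) = 20 + r.
\]
The key inequality is $d_2(v) \leq 18$. Each neighbour $v_i$ contributes at most $d(v_i)-1$ vertices to $N(v_i)\setminus\{v\}$; the $3$-face forces the edge $v_1v_2$, so $v_2\in N(v_1)\cap N(v)$ and symmetrically $v_1\in N(v_2)\cap N(v)$, reducing the distance-$2$ count by $2$; and each $4$-face $f_j = vv_jv_{j+1}x$ provides a common neighbour $x$ of $v_j$ and $v_{j+1}$, saving one more. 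Including the four vertices of $N(v)$ itself,
\[
d_2(v) \leq 4 + \sum_{i=1}^4(d(v_i)-1) - 2 - r = \sum_{i=1}^4 d(v_i) - 2 - r \leq 18.
\]

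Next I would build $G' = G - v + E^*$, where $E^*$ is a small set of chords among $\{v_1, v_2, v_3, v_4\}$ drawn inside the face produced by deleting $v$, chosen so that (i)~every pair of neighbours of $v$ has distance at most $2$ in $G'$, making $G'$ proper with respect to $G$; (ii)~the chords of $E^*$ are pairwise non-crossing, keeping $G'$ planar; and (iii)~$\D(G') \leq \D(G) = 6$. When $r = 3$, I would take $E^* = \{v_2v_3,\, v_4v_1\}$: two short chords that leave every neighbour's degree unchanged and place each non-consecutive pair at distance $2$ via a common neighbour. When $r \leq 2$, I would designate a degree-at-most-$5$ neighbour of $v$ (which exists since $n_6(v) \leq r \leq 3$) as a \emph{hub} and connect it to the remaining neighbours, preferably picking the hub from $\{v_1, v_2\}$ so that only two new edges are needed (since $v_1v_2$ is already an edge); if both $v_1$ and $v_2$ happen to be $6$-vertices, then $v_3$ and $v_4$ are forced to have degree at most $5$, and a short-chord construction tailored to the positions of the $4$-faces distributes the new edges over these low-degree vertices. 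Since $|V(G')|+|E(G')| < |V(G)|+|E(G)|$, by minimality of $G$ the graph $G'$ admits a $2$-distance $19$-coloring. Because $d_2(v) \leq 18$, at least one colour remains available for $v$; extending gives a $2$-distance $19$-coloring of $G$, contradicting our choice.

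The main obstacle is the construction of $E^*$: the three requirements (properness, planarity, and $\D(G')\leq 6$) interact with the positions of the $r$ four-faces around $v$ and the identity of the $6$-vertices among $v_1, \ldots, v_4$. The most delicate case is $r = 2$ when both $v_1$ and $v_2$ are $6$-vertices; here the hub cannot be placed on the $3$-face, and one must exploit the specific arrangement of the two $4$-faces (together with the forced low degrees at $v_3$ and $v_4$) to choose only short non-crossing chords that over-load no $6$-vertex.
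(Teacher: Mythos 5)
Your approach mirrors the paper's: both argue by contradiction, establish $d_2(v)\leq 18$ by the same accounting (low-degree neighbours and $4$-faces each save one), and then reduce to a graph $G'=G-v+E^*$ that is proper with respect to $G$. Your chord sets match the paper's in the cases you make explicit: $E^*=\{v_2v_3,v_1v_4\}$ for $r=3$, and $E^*=\{v_kv_3,v_kv_4\}$ when some $v_k\in\{v_1,v_2\}$ has degree at most $5$. For the ``delicate'' case you flag ($r=2$ with both $v_1,v_2$ of degree $6$, forcing $d(v_3),d(v_4)\leq 5$), the paper's resolution is simply $E^*=\{v_3v_4,\,v_2v_3,\,v_1v_4\}$: three consecutive, hence non-crossing, chords in the merged face, which together with the surviving edge $v_1v_2$ put every pair of $N(v)$ at distance at most $2$, and give $v_3,v_4$ a net degree change of $+1$ (so at most $6$) and $v_1,v_2$ a net change of $0$. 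In particular this choice does not depend on the positions of the two $4$-faces, so the tailoring you anticipated is unnecessary.
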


\begin{proof}
Let $f_1 = v_1 v v_2$ be the unique $3$-face incident to $v$.
Assume $m_4(v)=r$ for some $0 \le r \le 3$.
Suppose, for a contradiction, that $n_6(v) \le r$.
Then $v$ has $4-r$ neighbours of degree at most $5$, and is incident to $r$ $4$-faces.
By Lemma~\ref{lem:d2v}, this implies $d_2(v) \le 18$.
In particular, either $v$ has two $5^-$-neighbours, or $v$ is incident to three $4$-faces. We show that in each case, we can construct a graph $G'$ proper with respect to $G$, which gives a contradiction.

If $v$ has two $5^-$-neighbours $v_i,v_j$ such that  $\{i,j\}\cap \{1,2\}\neq \emptyset$, then we set $G'=G-v+\{v_kv_3,v_kv_4\}$ for  $k\in\{i,j\}\cap \{1,2\}$.
If $v$ has two $5^-$-neighbours  $v_i,v_j$ with $i<j$ such that  $\{i,j\}\cap \{1,2\}= \emptyset$, then we set $G'=G-v+\{v_iv_j,v_iv_2,v_jv_1\}$. 
If $v$ is incident to exactly three $4$-faces, then we set $G'=G-v+\{v_2v_3,v_1v_4\}$. 
In each case, $G'$ is proper with respect to $G$.
By Remark~\ref{rem:proper}, this yields a contradiction.
Hence, $n_6(v) \ge r+1$, as desired.
\end{proof}

\begin{lemma}\label{6lem:4-2-vertex}
Let $v$ be a $4(2)$-vertex. Then $m_4(v) \le 1$. In particular, if $m_4(v)=r$ for $0 \le r \le 1$, then $n_6(v) \ge r+3$.
\end{lemma}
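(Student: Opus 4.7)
The plan is to argue both claims by contradiction, using the minimality of $G$ together with the following standard estimate for a $4(2)$-vertex $v$ with neighbors $v_1,v_2,v_3,v_4$ in clockwise order:
\[
d_2(v) \le \sum_{i=1}^{4} d(v_i) - 2m_3(v) - m_4(v) = \sum_{i=1}^{4} d(v_i) - 4 - m_4(v).
\]
This bound comes from inclusion--exclusion on the neighborhoods $N(v_i)$: each triangle at $v$ produces an edge in $G[N(v)]$ (contributing $2$ to the drop via $\sum_i |N(v_i)\cap N(v)|$), and each $4$-face at $v$ forces two consecutive neighbors of $v$ to share a common vertex outside $N(v)\cup\{v\}$, creating one extra duplication in the union. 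Up to rotation, the two triangles at $v$ are either \emph{consecutive} (say $f_1 = v_1vv_2$ and $f_2 = v_2vv_3$) or \emph{opposite} (say $f_1 = v_1vv_2$ and $f_3 = v_3vv_4$), and the planar reduction of $G$ I use to invoke minimality will depend on this.

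For the first claim, I will suppose $m_4(v) \ge 2$. Since $m_3(v)+m_4(v) \le 4$, this forces $m_4(v)=2$ and every face at $v$ is a $3$- or $4$-face, so the estimate gives $d_2(v) \le 24-4-2 = 18$. In the consecutive case I take $G' = G - v + \{v_2v_4\}$: the vertex $v_2$ is then adjacent in $G'$ to $v_1,v_3$ (through the triangles) and to $v_4$, so every pair of vertices in $N(v)$ has distance at most $2$ in $G'$, and $G'$ is proper. In the opposite case I take $G' = G - v + \{v_1v_4,\,v_2v_3\}$: together with the triangle edges $v_1v_2$ and $v_3v_4$ these additions form the $4$-cycle $v_1v_2v_3v_4$, and since the endpoints of the two chords do not interleave in the boundary cyclic order of the face created by removing $v$, both can be drawn as non-crossing chords inside it, so $G'$ remains planar; the pairs $(v_1,v_3)$ and $(v_2,v_4)$ are then at distance $2$ in $G'$ via $v_2$ and $v_3$ respectively. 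In each case $G'$ is planar, proper with respect to $G$, and strictly smaller in $|V|+|E|$, so by minimality it admits a $2$-distance $19$-coloring, which extends to $v$ since $d_2(v) \le 18$, a contradiction.

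For the second claim, I will set $r = m_4(v) \in \{0,1\}$ and suppose for contradiction that $n_6(v) \le r+2$, so $v$ has at least $2-r$ neighbors of degree at most $5$. Then $\sum_i d(v_i) \le 5(2-r)+6(r+2) = 22+r$, hence the estimate yields $d_2(v) \le 22+r-4-r = 18$, and the same $G'$ constructions (consecutive or opposite, depending on the position of the two triangles) as above apply verbatim to produce the contradiction.

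The main obstacle is ensuring planarity of $G'$ in the opposite-triangle configuration: the natural attempt to add the two diagonals $v_1v_3$ and $v_2v_4$ fails because these chords cross inside the face left by removing $v$. The substitute $\{v_1v_4,\,v_2v_3\}$ avoids the crossing and still routes every pair of neighbors of $v$ through a length-$2$ path in $G'$ using the triangle edges $v_1v_2$ and $v_3v_4$. This is the key observation that makes a single uniform argument work across all sub-cases arising from combining ``consecutive vs.\ opposite'' with $r\in\{0,1,2\}$.
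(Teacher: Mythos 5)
Your proof is correct and follows essentially the same approach as the paper: you identify the two cases (consecutive vs.\ opposite $3$-faces) and use exactly the same reductions, namely $G-v+\{v_2v_4\}$ in the consecutive case and $G-v+\{v_1v_4,v_2v_3\}$ in the opposite case, together with the observation that $d_2(v)\le 18$ in every subcase. The only difference is that you make explicit the inclusion--exclusion bound $d_2(v)\le\sum_i d(v_i)-2m_3(v)-m_4(v)$, whereas the paper asserts $d_2(v)\le 18$ without spelling out the computation; your explicitness is helpful, and your remark on why the opposite-triangle reduction must use $\{v_1v_4,v_2v_3\}$ rather than the crossing diagonals matches the paper's choice.
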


\begin{proof}
Suppose that $f_1$ and $f_2$ are $3$-faces incident to $v$, where
$f_1 = v_i v v_{i+1}$ and $f_2 = v_j v v_{j+1}$ with $i < j$, and $i,j \in [4]$ taken cyclically.
Assume, for a contradiction, that $v$ is incident to two $4$-faces.
Then, by Lemma~\ref{lem:d2v}, we have $d_2(v) \le 18$.
If $f_1$ and $f_2$ are adjacent, choose  
$v_k \in N(v) \setminus \{v_i, v_{i+1}, v_j, v_{j+1}\}$ and set  
$G' = G - v + \{ v_j v_k \}$.
Otherwise, set  
$G' = G - v + \{ v_i v_{j+1},\, v_{i+1} v_j \}$.
In both cases, the resulting graph $G'$ is proper with respect to $G$.
By Remark~\ref{rem:proper}, this yields a contradiction.
Thus $m_4(v) \le 1$.

Now assume $m_4(v)=0$ and $n_6(v) \le 2$.
Then $v$ has two $5^-$-neighbours, and so Lemma~\ref{lem:d2v} gives $d_2(v) \le 18$.
Similarly, if $m_4(v)=1$ and $n_6(v) \le 3$, then $v$ has a $5^-$-neighbour, and again $d_2(v) \le 18$.
In each case, applying the same modification of $G$ as above yields a proper graph $G'$, contradicting Remark~\ref{rem:proper}.  
Thus $n_6(v) \ge r+3$ for $r = m_4(v)$.
\end{proof}

\begin{lemma}\label{6lem:4-vertex-has-no-bad5-66}
Let $v$ be a $4(2)$-vertex.  Then, $v$ has neither a $4$-neighbour nor a bad $5$-neighbour.
\end{lemma}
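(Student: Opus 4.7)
\textbf{The plan is to} proceed by contradiction, again using the deletion-and-extend strategy of Subsection~\ref{sub:6}. Assume that $v$ is a $4(2)$-vertex admitting a neighbour $v_a$ which is either a $4$-vertex or a bad $5$-vertex. By Lemma~\ref{6lem:4-2-vertex}, $m_4(v)\leq 1$; if $m_4(v)=1$, then $n_6(v)\geq 4$, forcing every neighbour of $v$ to be a $6$-vertex and immediately ruling out both alternatives for $v_a$. Hence $m_4(v)=0$, and Lemma~\ref{6lem:4-2-vertex} then gives $n_6(v)\geq 3$, so $v_a$ is the unique $5^-$-neighbour of $v$ and the other three neighbours have degree $6$.

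The first task is to prove $d_2(v)\leq 18$. The standard union count gives
\[
d_2(v)\;\leq\; \sum_{i=1}^{4} d(v_i)\;-\;2\,m_3(v)\;=\;\sum_{i=1}^{4} d(v_i)\;-\;4,
\]
so the $4$-neighbour case yields $d_2(v)\leq 4+6+6+6-4=18$ immediately. The bad $5$-neighbour case requires more work, as the naive bound only gives $d_2(v)\leq 23-4=19$. Here I would first observe that $v_a$ must belong to at least one of the two $3$-faces at $v$: otherwise both faces at $v_a$ bordering the edge $vv_a$ are non-triangles, which forces $m_3(v_a)\leq 3$ and contradicts $v_a$ being bad. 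Writing the cyclic neighbours of $v_a$ as $v,u_1,u_2,u_3,u_4$ with $u_1\in N(v)$, the condition $m_3(v_a)\geq 4$ guarantees, by a short case check on which face of $v_a$ (if any) is the missing triangle, that at least one of the $3$-faces $v_a u_1 u_2$ or $v_a u_4 u_3$ (in the latter case with $u_4\in N(v)$ as well) is present. This produces a vertex $w'\in\{u_2,u_3\}$ lying in $N(v_a)\cap N(w)$ for some $w\in N(v)\setminus\{v_a\}$. Such a $w'$ either belongs to $\{v_1,v_2,v_3,v_4\}$---contributing an extra unit to the count $\sum_i a_i$ of internal adjacencies---or is a genuinely new vertex counted in two different $N(v_i)$'s in the union $\bigcup_i N(v_i)$. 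Either way one gains one more unit of saving beyond the $-4$ coming from the two $3$-faces at $v$, and hence $d_2(v)\leq 19-1=18$.

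With $d_2(v)\leq 18$ in hand, I would then define $G'=G-v+E'$ by adding a small set of edges inside $N(v)$ exactly as in Lemma~\ref{6lem:4-2-vertex}, the chosen edges depending on whether the two $3$-faces at $v$ are adjacent or non-adjacent. Such a $G'$ is planar and proper with respect to $G$. By minimality, $G'$ admits a $2$-distance $19$-colouring, and since at most $18$ colours are forbidden at $v$, the colouring extends to $v$, contradicting the choice of $G$.

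\textbf{The main obstacle} will be the uniform verification of the overlap in the bad $5$-neighbour case: depending on whether $v_a$ lies in one or both of the $3$-faces at $v$, and on which face at $v_a$ (if any) fails to be a triangle, one must confirm that the produced $3$-face at $v_a$ always yields an overlap that is not already captured by the $-4$ savings from the two $3$-faces at $v$. Once this small case check is completed, the construction of $G'$ and the extension of the colouring follow the standard template used throughout this subsection.
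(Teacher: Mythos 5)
The proposal is correct and follows the same template as the paper's proof: assume a contradicting neighbour $v_a$, verify $d_2(v)\leq 18$, delete $v$ and add a few edges inside $N(v)$ to obtain a proper planar $G'$, and extend a $2$-distance $19$-colouring of $G'$ to $v$. Your opening detour through Lemma~\ref{6lem:4-2-vertex} to force $m_4(v)=0$ and $n_6(v)\geq 3$ is superfluous (since $\Delta=6$ already gives $\sum_i d(v_i)\leq 4+6+6+6$ or $5+6+6+6$ directly, and the $G'$ construction you invoke works for any $m_4(v)\leq 1$), and your cyclic-face case analysis around $v_a$ is an explicit unpacking of the paper's terser observation that some edge $v_av_j$ with $v_j\in N(v)$ must lie in two $3$-faces, which is what supplies the extra unit of saving beyond the $-4$ from $m_3(v)=2$.
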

\begin{proof}
Suppose first that one of $v_i$'s is a $4$-vertex, say $v_1$. Obviously, we have $d_2(v)\leq 18$, and let $G'=G-v+\{v_1v_2,v_1v_3,v_1v_4\}$, assuming these edges are not already present.  Observe that $G'$ is proper with respect to $G$. This contradicts  Remark \ref{rem:proper}.

Suppose now that $v$ has a bad $5$-neighbour, say $v_i$. Then, there exists an edge $v_iv_j$ for $v_{j}\in N(v)$ such that $v_iv_{j}$ is contained in two $3$-faces. By Lemma \ref{lem:d2v}, this implies $d_2(v)\leq 18$.  If we set $G'=G-v+\{v_iv_p,v_iv_t\}$ for $v_p,v_t\in N(v)\setminus \{v_i,v_j\}$, then $G'$ would be proper with respect to $G$.  Similarly as above, we get a contradiction.
\end{proof}

\begin{lemma}\label{6lem:5(4)-vertex-has-no-two-4 or 5(4)-neigh}
Let $v$ be a $5(4)$-vertex.  
If $v$ has a $4$-neighbour, then the other neighbours of $v$ are neither $4$-vertices nor $5(4)$-vertices.
\end{lemma}

\begin{proof}
Let $f_i = v_i v v_{i+1}$ for $i \in [4]$.
Suppose that $v$ has a $4$-neighbour, and assume for a contradiction that $v$ has another neighbour that is either a $4$-vertex or a $5(4)$-vertex.  
Then, by Lemma~\ref{lem:d2v}, we obtain $d_2(v) \le 18$.
If we define $G' = G - v + \{ v_1 v_5 \}$, then $G'$ is proper with respect to $G$.
By Remark~\ref{rem:proper}, this yields a contradiction.
\end{proof}

The following is an immediate consequence of Lemma~\ref{6lem:5(4)-vertex-has-no-two-4 or 5(4)-neigh},  
since a $5(4)$-vertex cannot simultaneously have a $4$-neighbour and a $5(4)$-neighbour.

\begin{corollary}\label{6cor:4-1-vertex-has-no-two-5-4}
If $v$ is a $4(1)$-vertex, then $v$ has no two $5(4)$-neighbours.
\end{corollary}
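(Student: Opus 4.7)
The plan is to combine a face-counting pigeonhole with Lemma \ref{6lem:5(4)-vertex-has-no-two-4 or 5(4)-neigh}. The whole point is to show that both hypothetical $5(4)$-neighbours of $v$ are forced to sit on its unique $3$-face, after which they are automatically adjacent and the preceding lemma applies to one of them.

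Set up notation by letting $v$ be a $4(1)$-vertex with cyclic neighbours $v_1,v_2,v_3,v_4$, and assume without loss of generality that the unique $3$-face incident to $v$ is $v_1vv_2$. For each neighbour $v_i$ of $v$, the edge $vv_i$ lies on the boundary of exactly two faces of $G$, and each of these faces is incident to both $v$ and $v_i$.

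The main step is the following claim: every $5(4)$-neighbour of $v$ belongs to $\{v_1,v_2\}$. To see this, let $v_i$ be a $5(4)$-neighbour of $v$. Since $v_i$ is a $5(4)$-vertex, exactly one of the five faces incident to $v_i$ is non-triangular, so at most one of the two faces at $v_i$ incident to the edge $vv_i$ fails to be a triangle; in particular, at least one of these two faces must be a $3$-face. This $3$-face is also incident to $v$, so by the $4(1)$-assumption on $v$ it must be the unique triangular face $v_1vv_2$ at $v$. Hence $v_i\in\{v_1,v_2\}$.

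Now suppose for contradiction that $v$ has two $5(4)$-neighbours. By the claim, they must be exactly $v_1$ and $v_2$. Since $v_1v_2\in E(G)$ (as an edge of the triangle $v_1vv_2$), the vertex $v_1$ is then a $5(4)$-vertex with both a $4$-neighbour (namely $v$) and a $5(4)$-neighbour (namely $v_2$), directly contradicting Lemma \ref{6lem:5(4)-vertex-has-no-two-4 or 5(4)-neigh}. The only delicate point is the pigeonhole step identifying the forced triangle at $vv_i$, but this is immediate from the definition of a $5(4)$-vertex, so I do not anticipate any real obstacle.
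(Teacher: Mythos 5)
Your proof is correct and follows essentially the same route as the paper. The paper dispatches this corollary in a single line as a "direct consequence" of Lemma \ref{6lem:5(4)-vertex-has-no-two-4 or 5(4)-neigh}, and your face-counting pigeonhole — that a $5(4)$-neighbour of a $4(1)$-vertex must lie on its unique incident triangle, hence the two hypothetical $5(4)$-neighbours would have to be $v_1$ and $v_2$ and thus adjacent — is exactly the implicit step that makes the paper's deduction go through.
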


\begin{lemma}\label{6lem:5-4-vertex-n4vleq2}
Let $v$ be a $5(4)$-vertex. 
\begin{itemize}
\item[$(a)$] If $m_4(v)=0$, then $v$ has at least two $6(5^-)$-neighbours.
\item[$(b)$] If $m_4(v)=1$, then $v$ has at least three $6(5^-)$-neighbours.
\end{itemize}
\end{lemma}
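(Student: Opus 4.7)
I argue by contradiction. Let $v$ be a $5(4)$-vertex with neighbours $v_1,\ldots,v_5$ in clockwise order, $f_i=vv_iv_{i+1}$ triangles for $i\in[4]$, and $f_5$ the remaining face. Partition $N(v)$ by type, letting $a$, $b$, $c$, $d$ denote the numbers of $4$-, $5$-, $6(6)$-, and $6(5^-)$-neighbours, respectively; by Lemmas~\ref{6lem:min-deg-4} and \ref{6lem:3-vertex} there is no $3^-$-neighbour, so $a+b+c+d=5$. Since $v_1$ and $v_5$ are incident with the non-triangular face $f_5$, neither can be a $6(6)$-vertex, giving $c\leq 3$; and by Lemma~\ref{6lem:5(4)-vertex-has-no-two-4 or 5(4)-neigh}, $a\leq 1$. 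Assuming $v$ has at most one $6(5^-)$-neighbour in (a) or at most two in (b) yields $a+b+c\geq 4$ or $\geq 3$, respectively.

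\textbf{Bounding $d_2(v)$.} The four forced edges $v_iv_{i+1}$ ($i\in[4]$) guarantee $|E(G[N(v)])|\geq 4$, and thus
\[
d_2(v)\leq \sum_{i=1}^{5} d(v_i)-8=(30-2a-b)-8=22-2a-b.
\]
Each $6(6)$-neighbour $v_i\in\{v_2,v_3,v_4\}$ forces both edges $v_{i-1}v_i$ and $v_iv_{i+1}$ to lie in a second triangle; the third vertex of such a triangle either falls in $N(v)$ (adding a new edge to $G[N(v)]$, worth $2$ units of saving) or is external (yielding a common neighbour of two of the sets $E_i:=N(v_i)\setminus (N(v)\cup\{v\})$, worth $1$ unit). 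Letting $s$ be the number of edges in $\{v_1v_2,v_2v_3,v_3v_4,v_4v_5\}$ having at least one $6(6)$-endpoint, a direct enumeration shows $s=0$ when $c=0$ and $s\geq c+1$ when $c\in\{1,2,3\}$ (equality occurring when the $6(6)$-neighbours are consecutive). In case (b), the fourth vertex $y$ of the $4$-face $f_5=vv_5yv_1$ is an additional common external neighbour of $v_1$ and $v_5$, contributing one more unit of saving. Combining,
\[
d_2(v)\leq 22-2a-b-s-m_4(v).
\]
A brief case check over $(a,b,c)$ using $a\leq 1$, $c\leq 3$, and the appropriate lower bound on $a+b+c$ confirms $2a+b+s+m_4(v)\geq 4$ in every subcase, whence $d_2(v)\leq 18$.

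\textbf{Contradiction via minimality.} Let $G'=G-v+\{v_1v_5\}$ (the new edge drawn inside the former face $f_5$, or a no-op if $v_1v_5\in E(G)$); then $G'$ is planar and $|V(G')|+|E(G')|<|V(G)|+|E(G)|$. In $G'$ the vertices $v_1,\ldots,v_5$ form a $5$-cycle, so every pair in $N(v)$ has distance at most $2$ in $G'$; together with the preservation of all other $G$-distances, $G'$ is proper with respect to $G$. By minimality, $G'$ admits a $2$-distance $19$-colouring, and since at most $d_2(v)\leq 18$ colours are forbidden at $v$, this colouring extends to $G$, contradicting the choice of $G$.

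The most delicate step is the savings accounting for $6(6)$-neighbours: one must verify that every event contributes at least one unit of saving even when the second-triangle vertex $z$ lies in $N(v)$ (producing new intra-$N(v)$ edges), and that distinct events sharing a common $z$ still yield aggregate savings at least equal to the number of events. Once this is handled, the case analysis reduces to routine arithmetic.
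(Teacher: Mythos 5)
Your proof is correct and uses the same reduction as the paper: assume fewer $6(5^-)$-neighbours than claimed, show $d_2(v)\leq 18$, pass to $G'=G-v+\{v_1v_5\}$, and invoke minimality. Where you differ is in detail, not in route: the paper simply asserts ``$v$ has four [resp.\ three] neighbours consisting of $6(6)$- or $5^-$-vertices, and so $d_2(v)\leq 18$,'' whereas you make the counting explicit, parameterising by $(a,b,c,d)$, using $c\le 3$ (since $v_1,v_5$ cannot be $6(6)$), and accounting for savings from outer triangle apices and the $4$-face. Your extra observation $a\le 1$ via Lemma~\ref{6lem:5(4)-vertex-has-no-two-4 or 5(4)-neigh} is not actually needed, since the $-2a$ term only helps. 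The one place where your write-up is thinner than it should be is exactly where you flag it: you assert but do not prove that the savings from distinct $s$-edges and from the $4$-face aggregate without loss even when apices coincide or lie in $N(v)$. This does hold (a coinciding apex $z$ appearing in $k\ge 2$ of the sets $N(v_i)$ contributes $k-1$ to the overcount, and an apex inside $N(v)$ forces at least one extra edge in $G[N(v)]$, worth two units), but a referee would want that spelled out; the paper sidesteps this by leaving the bound $d_2(v)\leq 18$ entirely to the reader.
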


\begin{proof}
Let $f_i = v_i v v_{i+1}$ for $i \in [4]$.

$(a)$ Let $m_4(v)=0$ and assume, for a contradiction, that $v$ has at most one $6(5^-)$-neighbour. 
Then $v$ has four neighbours that are all $6(6)$- or $5^-$-vertices, and hence $d_2(v) \le 18$ by Lemma~\ref{lem:d2v}.  
If we define $G' = G - v + \{ v_1 v_5 \}$, then $G'$ is proper with respect to $G$.  
By Remark~\ref{rem:proper}, this yields a contradiction.

\smallskip
$(b)$ Let $m_4(v)=1$ and assume, again for a contradiction, that $v$ has at most two $6(5^-)$-neighbours.
Then $v$ has three neighbours that are $6(6)$- or $5^-$-vertices, implying $d_2(v) \le 18$ by Lemma~\ref{lem:d2v}.  
As in the previous case, this leads to a contradiction.
\end{proof}

\begin{proposition}\label{6prop:5(4)-has-no-two-non-adj-5(4)}
A $5(4)$-vertex cannot have two non-adjacent $5(4)$-neighbours.
\end{proposition}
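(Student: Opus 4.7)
The plan is to mimic the pattern of the preceding lemmas: suppose for contradiction that $v$ is a $5(4)$-vertex with two non-adjacent $5(4)$-neighbours $v_p, v_q$, show that $d_2(v)\leq 18$, form the smaller graph $G'=G-v+\{v_1v_5\}$ (which will be proper with respect to $G$), extend a 2-distance $19$-coloring of $G'$ (guaranteed by minimality) to $v$, and derive a contradiction.

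First, I would apply Lemma \ref{6lem:5(4)-vertex-has-no-two-4 or 5(4)-neigh} in its contrapositive form: since $v$ has a $5(4)$-neighbour, it has no $4$-neighbour, so every $v_i$ has degree $5$ or $6$, with $v_p, v_q$ of degree exactly $5$. The standard inclusion estimate
\[
d_2(v)\;\leq\;|N(v)|+\sum_{i=1}^{5}(d(v_i)-1)-2\,e(N(v))-O,
\]
where $O=\sum_{w\in N_2(v)\setminus N(v)}\bigl(|\{i:w\in N(v_i)\}|-1\bigr)$, together with $\sum_{i=1}^{5}(d(v_i)-1)\leq 4+4+5+5+5=23$ and $e(N(v))\geq 4$ (from the four triangles $f_1,\ldots,f_4$ at $v$), gives $d_2(v)\leq 20-2\bigl(e(N(v))-4\bigr)-O$, so it remains to show that $2\bigl(e(N(v))-4\bigr)+O\geq 2$.

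The core step uses the $5(4)$-structure at $v_p$ and $v_q$. Each such vertex has at most one non-triangular incident face; at $v_p$, the two faces $f_{p-1}, f_p$ incident with the edges $vv_p$ are triangular, so among the remaining three faces at $v_p$ at least two are triangles. Looking at the cyclic order of $N(v_p)$, this forces at least one of the edges $v_pv_{p-1}$ or $v_pv_{p+1}$ to lie in a second triangle at $v_p$, whose third vertex is a common neighbour of $v_p$ with one of $v_{p-1}, v_{p+1}$. That auxiliary vertex either lies in $N(v)$ (producing an extra edge inside $N(v)$ and hence raising $e(N(v))$ by at least $1$, contributing $2$ to our bound) or lies outside $N(v)$ (becoming a common neighbour of two of the $v_i$'s and contributing at least $1$ to $O$). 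The same conclusion holds at $v_q$. A case analysis on the cyclic positions of $(v_p,v_q)$ (up to reflection symmetry at $v$ the four cases are $(v_1,v_3),(v_1,v_4),(v_1,v_5),(v_2,v_4)$) then confirms that the contributions from $v_p$ and $v_q$ combine to at least $2$: either two distinct external common neighbours each contribute $1$ to $O$, or the two coincide in a single vertex adjacent to at least three of the $v_i$'s (giving $O\geq 2$ from this vertex alone), or at least one extra edge inside $N(v)$ suffices on its own. In every subcase, $d_2(v)\leq 18$.

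Finally, with $G'=G-v+\{v_1v_5\}$, the triangle edges $v_1v_2,v_2v_3,v_3v_4,v_4v_5$ together with the added edge $v_1v_5$ form a $5$-cycle on $N(v)$, so every pair of vertices of $N(v)$ has distance at most $2$ in $G'$; every other distance-$2$ pair in $G$ whose endpoints are both different from $v$ is automatically preserved in $G-v$, because its connecting $2$-path cannot go through $v$. Hence $G'$ is proper with respect to $G$, so by minimality $G'$ admits a 2-distance $19$-coloring, and the bound $d_2(v)\leq 18$ leaves at least one colour available for $v$, contradicting the minimality of $G$. The main obstacle I anticipate is precisely the overcount/edge-count bookkeeping in the core step: checking uniformly across all four symmetry classes of $(v_p,v_q)$ that the two $5(4)$-structures never conspire to produce less than the required total reduction of $2$, particularly in the degenerate subcases where the forced common neighbours coincide with one another or fall inside $N(v)$.
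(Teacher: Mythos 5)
Your proposal is correct and follows essentially the same route as the paper: in both, the $5(4)$-structure of $v_p$ and $v_q$ forces two additional triangles attached along $N(v)$, which (via the overcount at common neighbours or extra edges in $G[N(v)]$) brings $d_2(v)$ down to at most $18$; one then deletes $v$, adds $v_1v_5$, colours the smaller graph by minimality, and extends. One small slip in your write-up: when $p\in\{1,5\}$, the two faces containing the edge $vv_p$ are \emph{not} both triangular --- one of them is the unique $4^+$-face of the $5(4)$-vertex $v$. But the argument is then even simpler: that same $4^+$-face is forced to be the unique $4^+$-face of $v_p$ as well, so the face on the far side of $v_1v_2$ (resp.\ $v_4v_5$) must be a triangle, and the extra triangle you need is still there.
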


\begin{proof}
Let $v$ be a $5(4)$-vertex, and let $f_i = v_i v v_{i+1}$ for $i \in [4]$.
Assume that $v$ has two $5(4)$-neighbours $v_i$ and $v_j$ such that $|i-j| \ge 2$.
Then either $v_i v_{i-1}$ or $v_i v_{i+1}$ (in cyclic order) is contained in two $3$-faces.
Similarly, either $v_j v_{j-1}$ or $v_j v_{j+1}$ is contained in two $3$-faces.
Thus $d_2(v) \le 18$ by Lemma~\ref{lem:d2v}.  
Setting $G' = G - v + \{ v_1 v_5 \}$ yields a graph $G'$ that is proper with respect to $G$, giving a contradiction by Remark~\ref{rem:proper}.
\end{proof}

\begin{lemma}\label{6lem:5-5-vertex-has-no-bad5}
Let $v$ be a $5(5)$-vertex.  
Then $v$ has neither a $4$-neighbour nor a bad $5$-neighbour nor a $6(6)$-neighbour.  
In particular, $n_5(v) \le 1$.
\end{lemma}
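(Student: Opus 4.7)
The plan is to apply the same template used throughout this subsection. Since $v$ is a $5(5)$-vertex, all five faces incident to $v$ are $3$-faces, so the edges $v_iv_{i+1}$ (indices cyclic) lie in $E(G)$, making $\{v_1,\ldots,v_5\}$ a $5$-cycle in $G$. This $5$-cycle has diameter $2$, so any two neighbours of $v$ are already at distance at most $2$ in $G-v$, which means $G':=G-v$ is proper with respect to $G$ without needing to add any edges. By minimality, $G'$ admits a $2$-distance $19$-coloring. In each of the four forbidden situations we will show $d_2(v) \le 18$, producing an available color for $v$ and a contradiction.

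The core inequality: since each $v_i$ has $v, v_{i-1}, v_{i+1}$ among its neighbours, the set $N(v_i)\setminus(\{v\}\cup\{v_1,\ldots,v_5\})$ has size at most $d(v_i)-3$, and therefore
\[
 d_2(v) \;\le\; 5 + \sum_{i=1}^{5}\bigl(d(v_i)-3\bigr) - (\text{overlaps in }N_2(v)\setminus N_1(v)).
\]
Since $\Delta\le 6$, each factor $d(v_i)-3$ is at most $3$. The work is to inspect each configuration and either lower one of the summands, or identify enough overlaps, to reach the bound $18$.

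I treat the four cases as follows. If $v_1$ is a $4$-neighbour, its contribution is at most $1$ while the other $v_i$ contribute at most $3$ each, giving $d_2(v)\le 5+1+12=18$. If $v_1$ has two $5$-neighbours (handling the $n_5(v)\le 1$ claim), each contributes at most $2$ and the remaining three contribute at most $3$, so $d_2(v)\le 5+2+2+9=18$. If $v_1$ is a $6(6)$-neighbour, all six faces at $v_1$ are $3$-faces; writing the cyclic neighbours of $v_1$ as $v_5,v,v_2,x_1,x_2,x_3$, the $3$-face condition forces $v_2x_1, x_1x_2, x_2x_3, x_3v_5 \in E(G)$, so $x_1\in N(v_2)$ and $x_3\in N(v_5)$ produce two overlaps, yielding $d_2(v)\le 5+3+12-2=18$. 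Finally, if $v_1$ is a bad $5$-neighbour, its contribution is at most $2$; the two $3$-faces at $v_1$ incident to $v$ account for two of its $3$-faces, and among the three remaining faces at $v_1$ at least two (for $5(4)$) or all three (for $5(5)$) are $3$-faces, which in every placement of the non-$3$-face forces at least one of $v_1$'s extra neighbours to lie in $N(v_2)$ or $N(v_5)$, producing at least one overlap. Hence $d_2(v)\le 5+2+12-1=18$.

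The main obstacle is the bad-$5$-neighbour case. Here one must verify by case analysis on the location of the (at most one) non-$3$-face at $v_1$ among the three faces of $v_1$ not incident to $v$ that at least one $3$-face among those remaining shares an edge with $v_2$ or $v_5$, so that the two ``new'' neighbours of $v_1$ cannot both escape being counted already through $v_2$ or $v_5$. The other three cases are essentially degree accounting, modulo the structural fact that $v_1$ being $6(6)$ forces its extra neighbours at the two ends of its cyclic neighbour list to be adjacent to $v_2$ and $v_5$ respectively.
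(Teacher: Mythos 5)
Your proof is correct, and for the bad-$5$-neighbour, $6(6)$-neighbour, and two-$5$-neighbour cases it is the same argument the paper uses (delete $v$, observe $G-v$ is proper, verify $d_2(v)\le 18$), with the degree/overlap accounting spelled out where the paper merely asserts $d_2(v)\le 18$. The genuine difference is the $4$-neighbour case: the paper handles it purely structurally, noting that any $4$-neighbour $u$ of a $5(5)$-vertex would satisfy $m_3(u)\ge 2$, hence be a $4(2)$-vertex by Lemma~\ref{6lem:4-vertex-m3v}, and $4(2)$-vertices have no bad $5$-neighbour by Lemma~\ref{6lem:4-vertex-has-no-bad5-66} --- whereas you run the same $d_2(v)\le 5+\sum(d(v_i)-3)$ count ($5+1+4\cdot 3 = 18$) and get the contradiction directly. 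Your route is more uniform and self-contained across all four subcases; the paper's route reuses already-proven reducibility and avoids repeating the count. Both are valid, and one minor remark: in your overlap counts you should note that an "extra" neighbour such as $x_1$ landing on $v_3$ (i.e.\ already in $N(v)$) only helps, since then $v_1$'s raw contribution drops below $d(v_1)-3$; this keeps the estimates safe in all placements.
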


\begin{proof}
Let $f_i = v_i v v_{i+1}$ for $i \in [5]$ in cyclic order.
By Lemma~\ref{6lem:4-vertex-m3v}, $v$ has no $4$-neighbour.
Now suppose that $v$ has either a bad $5$-neighbour, or a $6(6)$-neighbour, or two $5$-neighbours.  
In each of these cases, Lemma~\ref{lem:d2v} gives $d_2(v) \le 18$.  
If we set $G' = G - v$, then $G'$ remains proper with respect to $G$.  
By Remark~\ref{rem:proper}, this contradicts the minimality of $G$.  
\end{proof}

The following is an immediate consequence of Lemma~\ref{6lem:5-5-vertex-has-no-bad5}.

\begin{corollary}\label{6cor:5-5-vertex-has-four-light}
If $v$ is a $5(5)$-vertex, then $v$ has four $6(5^-)$-neighbours.
\end{corollary}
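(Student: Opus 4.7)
The plan is to combine the restrictions from Lemma~\ref{6lem:5-5-vertex-has-no-bad5} with the ambient facts $\D=6$ and $\delta(G)\geq 3$ in order to classify each of the five neighbours of a $5(5)$-vertex $v$. Write $v_1,\dots,v_5$ for the neighbours of $v$; since $v$ is a $5(5)$-vertex, $v$ is incident to the five $3$-faces $f_i=v_ivv_{i+1}$ (cyclically), so every edge $vv_i$ lies in two consecutive $3$-faces and consecutive neighbours $v_{i},v_{i+1}$ are adjacent.

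First, I would argue that no $v_i$ can be a $3$-vertex: if $d(v_i)=3$, then $v_i$ is incident to the $3$-face $v_{i-1}vv_i$, giving $m_3(v_i)\geq 1$, which contradicts Lemma~\ref{6lem:3-vertex}. Combined with $\delta(G)\geq 3$ from Lemma~\ref{6lem:min-deg-4}, this shows every $v_i$ has degree at least $4$.

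Next, Lemma~\ref{6lem:5-5-vertex-has-no-bad5} rules out $4$-neighbours, bad $5$-neighbours, $6(6)$-neighbours, and forces $n_5(v)\leq 1$. Since $\D=6$, every $v_i$ is either a $5$- or a $6$-vertex; the $5$-neighbour (if any) must be a $5(3^-)$-vertex (as bad $5$-neighbours are forbidden), and every $6$-neighbour is automatically a $6(5^-)$-vertex (as $6(6)$-neighbours are forbidden). With at most one of the five neighbours being a $5$-vertex, we conclude $n_6(v)\geq 4$, and so $v$ has at least four $6(5^-)$-neighbours.

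I do not anticipate a real obstacle: once the previous lemma is in hand, the corollary is a direct bookkeeping step, with the only subtle point being the quick exclusion of $3$-neighbours via Lemma~\ref{6lem:3-vertex}, which is not stated in Lemma~\ref{6lem:5-5-vertex-has-no-bad5} but is needed to cover all degree classes permitted by $\delta(G)\geq 3$ and $\D=6$.
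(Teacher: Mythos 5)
Your proof is correct and matches the paper's intent: the paper labels this an ``easy consequence'' of Lemma~\ref{6lem:5-5-vertex-has-no-bad5}, and your bookkeeping (excluding $3$-neighbours via Lemma~\ref{6lem:3-vertex}, then invoking no $4$-neighbours, no bad $5$-neighbours, no $6(6)$-neighbours, and $n_5(v)\leq 1$, with $\Delta=6$) is exactly the intended unpacking.
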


An edge $uv$ is said to be \emph{special} if $v$ is a $5(5)$-vertex and $uv$ is contained
in two $3$-faces $f_1,f_2$, each of which is adjacent to a $4^+$-face
(see Figure~\ref{fig:special edge}).  
Note that $u$ is a $5^+$-vertex by Lemmas~\ref{6lem:3-vertex} and \ref{6lem:4-vertex-m3v},

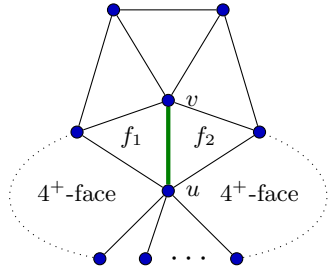
\begin{figure}[htb]
\centering   
\begin{tikzpicture}[scale=1.2]
\node [nod2] at (0,0) (v) [label=right: {\scriptsize $u$}] {};
\node [nod2] at (0,1) (u) [label=right: {\scriptsize $v$}] {}
	edge[green!50!black, ultra thick]  (v);
\node [nod2] at (-1,.65) (u1)  {}
	edge  (u)
	edge  (v);
\node [nod2] at (1,.65) (u2)  {}
	edge  (u)
	edge  (v);
\node [nod2] at (-.6,2) (u3)  {}
	edge  (u)
	edge  (u1);
\node [nod2] at (.6,2) (u4)  {}
	edge  (u)
	edge  (u2)
	edge  (u3);
\node [nod2] at (-.75,-.75) (v1)  {}
	edge  (v);
\node [nod2] at (-.25,-.75) (v2)  {}
	edge  (v);
\node [nod2] at (.75,-.75) (v3)  {}
	edge  (v);
\node at (0.25,-.75) (asd)     {$\cdots$} ;

\node at (-.4,.6) (f1)     {\scriptsize $f_1$} ;
\node at (.4,.6) (f2)     {\scriptsize $f_2$} ;

\draw[dotted] (v1) .. controls (-2,-.5) and (-2,.25) .. (u1);
\node at (-1,0) (asd)     {\scriptsize $4^+$-face} ;
	
\draw[dotted] (v3) .. controls (2,-.5)  and (2,.25) .. (u2);
\node at (1,0) (asd)     {\scriptsize $4^+$-face} ;

\end{tikzpicture}  
\caption{A special edge $uv$ with a $5(5)$-vertex $v$ and a $5^+$-vertex $u$.}
\label{fig:special edge}
\end{figure}

\begin{proposition}\label{6prop:5-5-vertex-edge-contained-3-faces}
Let $v$ be a $5(5)$-vertex, and let $v_iv_{i+1} \in E(v)$.
\begin{itemize}
\item[$(a)$] If $v_iv_{i+1}$ is contained in two $3$-faces, then all neighbours of $v$ are $6(5^-)$-vertices.  
In particular, for every vertex $v_j \in N(v) \setminus \{v_i,v_{i+1}\}$, the edge $vv_j$ is a special edge.

\item[$(b)$] If no edge in $E(v)$ is contained in two $3$-faces, then  each edge $vv_i$ is a special edge for every $v_i \in N(v)$, and $v$ has four $6(4^-)$-neighbours.
\end{itemize}
\end{proposition}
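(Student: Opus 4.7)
The plan is to use the standard counting bound
\[
d_2(v) \;\le\; \sum_{i=1}^{5} d(v_i) \;-\; 2\,|E(G[N(v)])| \;-\; s,
\]
where $s\ge 0$ measures the saving from shared second-neighbours lying outside $N[v]$. Let the neighbours of $v$ be $v_1,\ldots,v_5$ in clockwise order so that $f_i=v_ivv_{i+1}$ for each $i\in[5]$; since all five incident faces are triangles, the edges $v_1v_2,v_2v_3,\ldots,v_5v_1$ form a $5$-cycle in $G$, so $|E(G[N(v)])|\ge 5$ and this $5$-cycle has diameter $2$. Consequently $G'=G-v$ is proper with respect to $G$, so whenever I can show $d_2(v)\le 18$ the minimality of $G$ yields a contradiction: $G'$ admits a $2$-distance $19$-coloring which extends to $v$.

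For part $(a)$, I would first argue that all five neighbours of $v$ are $6(5^-)$-vertices. By Lemma~\ref{6lem:5-5-vertex-has-no-bad5}, the only alternative is a single $5(3^-)$-neighbour, in which case $\sum d(v_i)\le 29$. The second $3$-face containing $uw$ has the form $uwy$ for some $y\ne v$: if $y\in N(v)$, this creates at least one extra edge in $G[N(v)]$, giving $d_2(v)\le 29-12=17$; if $y\notin N(v)$, then $y$ is a shared second-neighbour of $u$ and $w$, so $s\ge 1$ and $d_2(v)\le 29-10-1=18$. Either way we reach a contradiction, so every neighbour of $v$ is a $6(5^-)$-vertex.

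For the ``in particular'' clause, write $u=v_i,\ w=v_{i+1}$ and suppose for contradiction that for some $j\notin\{i,i+1\}$ the edge $vv_j$ is not special, so one of $v_{j-1}v_j,\,v_jv_{j+1}$ lies in a second $3$-face. This produces two \emph{distinct} edges of $G[N(v)]$ each contained in two $3$-faces. Now $\sum d(v_i)=30$, so it suffices to accumulate a total saving $(2|E(G[N(v)])|-10)+s\ge 2$. The main technical obstacle is the configuration where both ``apex'' vertices of the extra $3$-faces lie outside $N(v)$ and coincide: here I would observe that the common apex is then adjacent to at least three distinct neighbours of $v$ (since two distinct edges share at most one vertex), so it alone forces $s\ge 2$. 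In every other configuration, each extra $3$-face independently contributes either at least one new edge to $G[N(v)]$ or at least one unit to $s$. Thus $d_2(v)\le 18$ in all cases, yielding the contradiction and forcing $vv_j$ to be a special edge.

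Part $(b)$ follows almost immediately from the hypothesis. Since no edge in $G[N(v)]$ lies in two $3$-faces, each cycle edge $v_{i-1}v_i$ has its non-$v$-incident face being a $4^+$-face, so both triangles at $vv_j$ are adjacent to a $4^+$-face for every $j$, making $vv_j$ special. Moreover, for any $6$-vertex neighbour $v_j$, the two faces of $G$ at $v_j$ across $v_{j-1}v_j$ and $v_jv_{j+1}$ are $4^+$-faces by the same reasoning, so $m_3(v_j)\le 4$, i.e.\ $v_j$ is a $6(4^-)$-vertex. Since Corollary~\ref{6cor:5-5-vertex-has-four-light} furnishes four $6(5^-)$-neighbours of $v$---hence four degree-$6$ neighbours---these four are actually $6(4^-)$-neighbours, completing the proof.
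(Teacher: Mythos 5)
Your proof is correct and follows essentially the same route as the paper: reduce everything to showing $d_2(v)\le 18$ and invoke minimality with $G'=G-v$. The main difference is that you spell out the counting bound $d_2(v)\le \sum_i d(v_i)-2|E(G[N(v)])|-s$ explicitly where the paper merely asserts the resulting inequality, and, usefully, in part~$(b)$ you make explicit the step (left implicit in the paper, which cites only Corollary~\ref{6cor:5-5-vertex-has-four-light} for $6(5^-)$-neighbours) that the faces of $v_j$ across $v_{j-1}v_j$ and $v_jv_{j+1}$ are $4^+$-faces and hence these neighbours are in fact $6(4^-)$-vertices.
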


\begin{proof}
Let $f_i = v_i v v_{i+1}$ for $i \in [5]$ in cyclic order.

$(a)$ Suppose that the edge $v_i v_{i+1}$ is contained in two $3$-faces.  
If $v$ has either a $5^-$-neighbour or a $6(6)$-neighbour, then Lemma~\ref{lem:d2v} implies $d_2(v) \le 18$.  
Removing $v$ (i.e., taking $G' = G - v$) yields a graph proper with respect to $G$, contradicting Remark~\ref{rem:proper}.  
Hence every neighbour of $v$ must be a $6(5^-)$-vertex.

Now suppose that there exists another neighbour $v_j \in N(v) \setminus \{v_i, v_{i+1}\}$ such that either $v_j v_{j-1}$ or $v_j v_{j+1}$ is contained in two $3$-faces.  
Then Lemma~\ref{lem:d2v} again gives $d_2(v) \le 18$, producing the same contradiction.  
Therefore, for every $v_j \in N(v) \setminus \{v_i, v_{i+1}\}$, the edge $vv_j$ is a special edge.

\smallskip
$(b)$ Suppose no edge in $E(v)$ is contained in two $3$-faces.  
Then, by definition of a special edge, each $vv_i$ is a special edge for all $v_i \in N(v)$.  
Since a $5(5)$-vertex has no bad $5$-neighbour and no $6(6)$-neighbour by Lemma~\ref{6lem:5-5-vertex-has-no-bad5}, it follows from Corollary~\ref{6cor:5-5-vertex-has-four-light} that $v$ has four $6(5^-)$-neighbours.  
\end{proof}

\begin{lemma}\label{6lem:6-5-vertex}
Let $v$ be a $6(5)$-vertex. Then $n_4(v) \le 3$. In particular:
\begin{itemize}
\item[$(a)$] If $n_4(v)=3$, then $v$ has no bad $5$-neighbour.
\item[$(b)$] If $n_4(v)=2$, then $v$ has at most two bad $5$-neighbours.
\item[$(c)$] If $n_4(v)=2$ and $v$ has two bad $5$-neighbours, then $v$ has two $6(4^-)$-neighbours.
\item[$(d)$] If $n_4(v)=1$, then $v$ has at most three bad $5$-neighbours.
\item[$(e)$] If $n_4(v)=1$ and $v$ has three bad $5$-neighbours, then $v$ has a $6(4^-)$-neighbour.
\item[$(f)$] If $n_4(v)=0$, then $v$ has at most four bad $5$-neighbours.
\item[$(g)$] If $n_4(v)=0$, $m_4(v)=1$, and $v$ has four bad $5$-neighbours, then $v$ has two $6(4^-)$-neighbours.
\end{itemize}
\end{lemma}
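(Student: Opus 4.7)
The plan is to dispatch all nine assertions by contradiction: in each case I will assume the conclusion fails, show $d_2(v)\le 18$ by a careful count, and then apply minimality to a strictly smaller proper planar graph $G'$ obtained from $G$ by deleting $v$ and adding a planar fan of chords among $v_1,\dots,v_6$ inside the face freed by removing $v$. This yields a $2$-distance $19$-coloring of $G'$ which, since $d_2(v)\le 18$, extends to $v$ and contradicts the counterexample status of $G$.

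I will label the faces around $v$ so that $f_i=v_ivv_{i+1}$ is a triangle for $i\in[5]$ and $f_6$ is the unique $4^+$-face at $v$ (incident with $vv_1$ and $vv_6$). The five triangle edges $v_iv_{i+1}$ then lie in $G[N(v)]$, and $|N(v_i)\cap N(v)|\ge 1$ for $i\in\{1,6\}$ and $|N(v_i)\cap N(v)|\ge 2$ for $i\in\{2,3,4,5\}$. The standard bound
\[
d_2(v)\le d(v)+\sum_{i=1}^6\bigl(d(v_i)-1-|N(v_i)\cap N(v)|\bigr)
\]
therefore gives the baseline $d_2(v)\le \sum_{i=1}^6 d(v_i)-10$; in particular $n_4(v)\ge 4$ forces $\sum d(v_i)\le 28$ and $d_2(v)\le 18$, proving $n_4(v)\le 3$.

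For the finer items I will use an extra saving of at least one per ``heavy'' neighbour. A bad $5$-neighbour $v_j$ at an interior position $j\in\{2,3,4,5\}$ has at least two triangles at $v_j$ avoiding $v$, and by the cyclic order around $v_j$ at least one of these extra triangles uses an edge $v_{j-1}v_j$ or $v_jv_{j+1}$, thereby exposing either a common neighbour of $v_j$ with $v_{j\pm 1}$ outside $N(v)$ (a double count missed by the baseline) or an additional neighbour of $v_j$ inside $N(v)$ (so $d(v_j)-|N(v_j)\cap N(v)|$ drops by one); either way $d_2(v)$ decreases by at least one. The same reasoning yields an extra saving per $6(5^+)$-neighbour (with at least three extra triangles available) and a strictly larger saving per corner bad $5$-neighbour at position $j\in\{1,6\}$.

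With these tools each part becomes short arithmetic: (a) $n_4(v)=3$ plus one bad $5$-neighbour gives $\sum d(v_i)\le 29$ and saving $\ge 1$, so $d_2(v)\le 18$; (b), (d), (f) each extra bad $5$-neighbour beyond the stated cap increases $\sum d(v_i)$ by $1$ and the saving by $1$, preserving $d_2(v)\le 18$; (c), (e), (g) the negation forces some remaining slot to be either a $5^-$-vertex (dropping $\sum d(v_i)$ by at least one) or a $6(5^+)$-vertex (supplying the extra saving), so $d_2(v)\le 18$ follows in either subcase. Once $d_2(v)\le 18$ is in hand I will set $G'=G-v+\{v_2v_4,v_2v_5,v_2v_6\}$ (a planar fan of chords drawn inside the merged former-$v$-face, which keeps every pair of $v_i$'s at distance $\le 2$ in $G'$ via $v_2$), which is proper with respect to $G$ and strictly smaller; minimality then finishes the argument. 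The tightest bookkeeping arises in (g), where only two free slots remain to carry the argument: the savings from the four bad $5$-neighbours and from any $6(5^+)$-intruder must be shown to be disjoint---otherwise two of them could both be credited to the same edge $v_jv_{j+1}$---which I expect to settle by a direct face-tracing around each heavy neighbour that ties its contributing extra triangle to a unique edge of $G[N(v)]$.
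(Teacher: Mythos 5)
Your high-level strategy (show $d_2(v)\le 18$, then delete $v$ and add chords, and invoke minimality) is the same as the paper's, but the two places where the plan gets concrete both have genuine problems.

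First, the uniform reduction $G'=G-v+\{v_2v_4,v_2v_5,v_2v_6\}$ does not work. Minimality only gives a $2$-distance $(2\Delta(G')+7)$-coloring of $G'$, so you need $\Delta(G')\le\Delta(G)=6$, which forces $v_2$ to be a $4$-vertex (it loses $v$ and gains three new neighbours). In parts~(f) and~(g) the hypothesis is $n_4(v)=0$, so no neighbour of $v$ is a $4$-vertex and the construction necessarily pushes some degree to $7$ or $8$. The paper instead uses different chord-sets in different cases (e.g.\ $G'=G-v+\{v_1v_6,v_3v_1,v_3v_5\}$ with $v_1,v_3$ of degree at most $5$), each chosen so that no vertex exceeds degree $6$; and in one sub-case of~(g) \emph{no} such $G'$ is available, which is precisely why the paper falls back on a separate structural lemma (Proposition~\ref{6prop:5(4)-has-no-two-non-adj-5(4)}). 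Your proposal has no mechanism to handle that sub-case.

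Second, the ``saving per heavy neighbour'' bookkeeping does not close. Each bad $5$-neighbour $v_j$ certifies that at least one of the edges $v_{j-1}v_j,\,v_jv_{j+1}$ lies in two $3$-faces, but adjacent bad $5$-neighbours can certify the \emph{same} edge, so the savings are not additive. The paper's own proof of~(g) makes this explicit: four bad $5$-neighbours guarantee only two distinct edges $v_iv_{i+1}$, $v_jv_{j+1}$ in two $3$-faces, not four. With only two savings the arithmetic in your plan gives $d_2(v)\le 20$, not $\le 18$; the extra reduction in the paper's count comes from additional, case-specific sources (a $6(5)$-corner-neighbour and the fourth vertex of the $4$-face $f_6$), which your uniform argument does not extract. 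You flag the possible overlap (``I expect to settle by a direct face-tracing'') but never resolve it, and in the worst case it cannot be resolved the way you hope. A smaller issue: corner bad $5$-neighbours are forced to be $5(4)$-vertices (the $4^+$-face $f_6$ occupies one of their five face-slots), so they also give only a saving of $1$, not the ``strictly larger'' saving you claim.

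Because of these two gaps the proposal is not a proof of the lemma; it would need a per-case choice of $G'$ respecting the degree cap and a genuinely disjoint accounting of the edge-savings (or, as in the paper, an appeal to Proposition~\ref{6prop:5(4)-has-no-two-non-adj-5(4)} where no suitable $G'$ exists).
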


\begin{proof}
Let $f_i = v_i v v_{i+1}$ for $i \in [5]$.  
Recall that $v$ has no $3$-neighbour by Lemma~\ref{6lem:3-vertex}.

\smallskip

We claim that $n_4(v)\leq 3$. Indeed, if $v$ had four $4$-neighbours, then Lemma~\ref{lem:d2v} would imply $d_2(v) \le 18$.  
Among $v_2,v_3,v_4,v_5$, at least one must be a $4$-vertex; assume $v_2$ is such.  
Then setting  
$G' = G - v + \{ v_2 v_4, v_2 v_5, v_2 v_6 \}$  
yields a graph that is proper with respect to $G$, contradicting Remark~\ref{rem:proper}.  

\smallskip
$(a)$ Let $n_4(v)=3$.
If $v$ had a bad $5$-neighbour $v_i$, then  by Lemma~\ref{lem:d2v} we would have $d_2(v) \le 18$.  
Similarly as above, this leads to a contradiction.

\smallskip
$(b)$ Let $n_4(v)=2$.
If $v$ had three bad $5$-neighbours, then  by Lemma~\ref{lem:d2v} we would have $d_2(v) \le 17$.  
If one of $v_2,v_3,v_4,v_5$ is a $4$-vertex (say $v_2$), take  
$G' = G - v + \{ v_2 v_4, v_2 v_5, v_2 v_6 \}$.  
Otherwise, both $v_1$ and $v_6$ are $4$-vertices; with a $5$-neighbour $v_2$, we may take  
$G' = G - v + \{ v_1 v_6, v_2 v_4, v_2 v_6 \}$.  
In both cases, $G'$ is proper with respect to $G$, a contradiction.

\smallskip
$(c)$ Let $n_4(v)=2$, and suppose that $v$ has two bad $5$-neighbours.
Each bad $5$-neighbour $v_i$ forces either $v_i v_{i-1}$ or $v_i v_{i+1}$ to lie in two $3$-faces.  
If $v$ had at most one $6(4^-)$-neighbour, then the last neighbour of $v$ would be either a $5$- or  $6(5)$- or  $6(6)$-vertex, giving $d_2(v) \le 18$.  
Applying the same reductions as in part (b) yields a contradiction.
Thus $v$ has at least two $6(4^-)$-neighbours.

\smallskip
$(d)$ Let $n_4(v)=1$.
If $v$ had four bad $5$-neighbours, then by Lemma~\ref{lem:d2v} we would have $d_2(v) \le 18$. 
If one of $v_2,v_3,v_4,v_5$ is a $4$-vertex (say $v_2$), use  
$G' = G - v + \{ v_2 v_4, v_2 v_5, v_2 v_6 \}$.  
Otherwise the $4$-vertex is $v_1$ or $v_6$; assume $v_1$.  
Since $v$ has four $5$-neighbours, one of $v_3,v_5$ is $5$; assume $v_3$.  
Then  
$G' = G - v + \{ v_1 v_6, v_3 v_1, v_3 v_5 \}$  
is proper with respect to $G$, a contradiction.

\smallskip
$(e)$ Let $n_4(v)=1$ and suppose that $v$ has three bad $5$-neighbours.
Observe that two distinct edges among $v_i v_{i+1}$ and $v_j v_{j+1}$ are contained in two $3$-faces.  
If $v$ had no $6(4^-)$-neighbour, then its remaining neighbours would be $5$-, $6(5)$-, or $6(6)$-vertices, giving $d_2(v) \le 18$ by Lemma~\ref{lem:d2v}.  
If one of $v_2,v_3,v_4,v_5$ is a $4$-vertex (say $v_2$), then define  
$G' = G - v + \{ v_2v_4, v_2v_5, v_2v_6 \}$.
Otherwise, the $4$-vertex must be $v_1$ or $v_6$; assume $v_1$.  
If $v_2,v_4,v_6$ are $5$-vertices, then we set  
$G' = G - v + \{ v_1v_6, v_2v_4, v_2v_6 \}$.
Else, one of $v_3,v_5$ is a $5$-vertex; assume $v_3$.  
Then $G' = G - v + \{ v_1v_6, v_3v_1, v_3v_5 \}$.
In each case, $G'$ is proper with respect to $G$, contradicting Remark~\ref{rem:proper}.  
Therefore, $v$ must have a $6(4^-)$-neighbour.

\smallskip
$(f)$ Let $n_4(v)=0$.
If $v$ had five bad $5$-neighbours, then there would  exist three distinct edges of $E(v)$ such that each of them is contained in two $3$-faces, giving $d_2(v) \le 18$ by Lemma~\ref{lem:d2v}.
Among $v_1,v_6$, one is $5$-vertex; assume $v_1$.  
Among $v_3,v_5$, one is $5$-vertex; assume $v_3$.  
Then  
$G' = G - v + \{ v_1 v_6, v_3 v_1, v_3 v_5 \}$  
is proper with respect to $G$, a contradiction.
Thus $v$ has at most four bad $5$-neighbours.

\smallskip
$(g)$ Let $n_4(v)=0$, $m_4(v)=1$, and suppose that $v$ has four bad $5$-neighbours.
Observe that two non-adjacent edges among $v_i v_{i+1}$ is contained  in two $3$-faces.  
If $v$ had at most one $6(4^-)$-neighbour, then its remaining neighbour would be a $5(3^-)$-vertex or a $6(5^+)$-vertex, giving $d_2(v) \le 18$ by Lemma~\ref{lem:d2v}.  
This implies either $v$ has five $5$-neighbours or three edges around $v$ lie in two $3$-faces.

If $v$ has five $5$-neighbours, the argument from part (f) applies, giving a contradiction.  
Thus $v$ has exactly four $5$-neighbours.  
If $v_1$ (resp. $v_6$) is a $5$-vertex and one of $v_3,v_5$ (resp. $v_2,v_4$) is a $5$-vertex, we again use the same reduction as in (f).  
Therefore we may assume $v_1,v_6$ are $6$-vertices and $v_2,v_3,v_4,v_5$ are bad $5$-vertices.
Since a $5(5)$-vertex cannot have any bad $5$-neighbour by Lemma~\ref{6lem:5-5-vertex-has-no-bad5}, each of $v_2, v_3, v_4, v_5$ must be a $5(4)$-vertex.  
However, by Proposition~\ref{6prop:5(4)-has-no-two-non-adj-5(4)}, a $5(4)$-vertex cannot have two non-adjacent $5(4)$-neighbours.  
This contradicts planarity of $G$, and therefore $v$ must have at least two $6(4^-)$-neighbours.
\end{proof}

We  now apply discharging method to show that $G$ cannot exist. First, we assign to each vertex $v$ a charge $\mu(v)=d(v)-4$ and to each face $f$ a charge $\mu(f)=\ell(f)-4$. By Euler's formula, we have
$$\sum_{v\in V(G)}\left(d(v)-4\right)+\sum_{f\in F(G)}(\ell(f)-4)=-8  .$$

We next present some rules and redistribute accordingly. Once the discharging finishes, we check the final charge $\mu^*(v)$ and $\mu^*(f)$. If $\mu^*(v)\geq 0$ and $\mu^*(f)\geq 0$, we get a contradiction that no such a counterexample can exist. \medskip

\noindent  
\textbf{{Discharging Rules}} \medskip

We apply the following discharging rules.

\begin{itemize}
\item[\textbf{R1:}] Every $3$-face  receives $\frac{1}{3}$  from each of its incident vertices.
\item[\textbf{R2:}] Let $f$ be a $5^+$-face. Then $f$ gives
\begin{itemize}
\item[$(a)$]  $\frac{1}{3}$  to each of its incident $3$-vertices and bad $4$-vertices,
\item[$(b)$]  $\frac{1}{5}$  to each of its incident $5(4)$-vertices,
\item[$(c)$] $\frac{1}{5}$  to each of its incident $6(5)$-vertex having no $4$-neighbour.
\end{itemize}
\item[\textbf{R3:}] Every $6(3^-)$-vertex gives $\frac{1}{6}$  to each of its  neighbours.
\item[\textbf{R4:}] Let $v$ be a $6(4)$-vertex. Then $v$ gives
\begin{itemize}
\item[$(a)$]  $\frac{1}{12}$  to each of its  $4$- and $6(5)$-neighbours,
\item[$(b)$]  $\frac{1}{9}$  to each of its $3$- and $5(4)$-neighbours,
\item[$(c)$] $\frac{1}{6}$  to each of its  $5(5)$-neighbour $u$ if $uv$ is a special edge,
\item[$(d)$]  $\frac{1}{9}$  to each of its  $5(5)$-neighbour $u$ if $uv$ is not a special edge.
\end{itemize}
\item[\textbf{R5:}] Let $v$ be a $6(5)$-vertex. Then $v$ gives
\begin{itemize}
\item[$(a)$]  $\frac{1}{12}$  to each of its bad  $4$-neighbours,
\item[$(b)$]  $\frac{1}{9}$  to each of its bad $5$-neighbours.
\end{itemize}
\end{itemize}

\vspace*{1em}

\noindent
\textbf{Checking} $\mu^*(v), \mu^*(f)\geq 0$ for $v\in V(G), f\in F(G)$.\medskip

First we show that $\mu^*(f)\geq 0$ for each $f\in F(G)$. Given a face $f\in F(G)$, 
if $f$ is a $3$-face, then it receives $\frac{1}{3}$ from each of incident vertices by R1, and so $\mu^*(f)= -1+3\times \frac{1}{3}=0$. 
If $f$ is a $4$-face, then $\mu(f)=\mu^*(f)= 0$.

Let $f$ be a $5$-face.
Suppose first that $f$ is not incident to any $3$-vertex. Note that if $f$ is not incident to any bad $4$-vertices, then  $\mu^*(v)\geq 1-5\times\frac{1}{5}= 0$ after   $f$ sends  $\frac{1}{5}$ to each of its incident vertices by R2(b),(c).
Therefore we  further suppose that $f$ is incident to at least one bad $4$-vertex, let $u$ be such a bad $4$-vertex incident to $f$. 
If $u$ is a $4(1)$-vertex, then it has at most one neighbour consisting of  $4$- or  $5(4)$-vertex by Lemma \ref{6lem:4-vertex-has-no-5(4)-neigh} and Corollary \ref{6cor:4-1-vertex-has-no-two-5-4}.
Besides, if $u$ is a $4(2)$-vertex, then the neighbours of $u$ are neither $4$-vertex nor $5(4)$-vertex by Lemma \ref{6lem:4-vertex-has-no-bad5-66}.
As a result, $f$ is incident to at most three bad $4$-vertices. On the other hand, a $5(4)$-vertex having a $4$-neighbour cannot have any other $4$- or $5(4)$-neighbour by Lemma \ref{6lem:5(4)-vertex-has-no-two-4 or 5(4)-neigh}.
Thus, we conclude the following.
\begin{itemize}
\item If $f$ is incident to three bad $4$-vertices, then the remaining vertices incident to $f$ are neither a $4$-vertex, nor a $5(4)$-vertex, nor a \(6(5)\)-vertex having no $4$-neighbour.
\item If $f$ is incident to exactly two bad $4$-vertices, then $f$ is incident to at most one vertex  that is either a \(5(4)\)-vertex or a \(6(5)\)-vertex  having no $4$-neighbour.
\item If $f$ is incident to exactly one bad $4$-vertex, say $w$, then one of the neighbors of $w$ lying on $f$ is neither a $4$-vertex, nor a $5(4)$-vertex, nor a \(6(5)\)-vertex having no $4$-neighbour. 
\end{itemize}
In each case, we deduce that $\mu^*(f)\geq  0$, since the face $f$ sends $\frac{1}{3}$ to each of its incident bad $4$-vertices by R2(a), $\frac{1}{5}$ to each of its incident $5(4)$-vertices by R2(b) and $\frac{1}{5}$ to each of its incident $6(5)$-vertices having no $4$-neighbour by R2(c).

Now we suppose that $f$ is incident to a $3$-vertex, say $v_1$. Then the neighbours of $v_1$ lying on $f$, denote by $v_2,v_5$, are different from $5^-$- and $6(5^+)$-vertex by Lemma \ref{6lem:3-vertex}. Thus,  $\mu^*(v)\geq 1-3\times\frac{1}{3}= 0$ after   $f$ sends  at most $\frac{1}{3}$ to each of its incident vertices other than $v_2,v_5$ by R2.

On the other hand, if $f$ is a $6^+$-face, then we have $\mu^*(v)\geq \ell(f)-4-\ell(f)\times\frac{1}{3}\geq 0$ after  $v$ sends at most $\frac{1}{3}$ to each of its incident vertices by R2.  \medskip

We now pick a vertex $v\in V(G)$ with $d(v)=k$. By Lemma \ref{6lem:min-deg-4}, we have $k\geq 3$. \medskip

\textbf{(1).} Let $k=3$. By Lemma \ref{6lem:3-vertex}, $m_3(v)=0$, $m_4(v)\leq 1$, and each neighbour of $v$ is a $6(4^-)$-vertex. Note that $v$ is incident to at least two $5^+$-faces, and each of them gives $\frac{1}{3}$ to $v$ by R2(a). On the other hand, $v$ is incident to three $6(4^-)$-vertices, and each of them gives at least $\frac{1}{9}$ to $v$ by R3 and R4(b). Thus, we have $\mu^*(v)\geq -1 + 2\times\frac{1}{3}+3\times \frac{1}{9}= 0$.  \medskip

\textbf{(2).} Let $k=4$. By Lemma \ref{6lem:4-vertex-m3v}, $v$ is incident to at most two $3$-faces, in particular,  $v$ has no $6(6)$-neighbour. 
Clearly, we have $\mu^*(v)\geq  0$ when $v$ is not incident to any $3$-face, since $v$ does not give a charge to any vertex or face. We may therefore assume that $1\leq m_3(v)\leq 2$; namely, $v$ is a bad $4$-vertex.

First suppose that $m_3(v)=1$. Let $m_4(v)=r$ for $0\leq r \leq 3$. By Lemma \ref{6lem:4-1-vertex}, we have $n_6(v)\geq r+1$, i.e., $v$ has at least $r+1$ many $6(5^-)$-neighbours, and each of them gives at least $\frac{1}{12}$ to $v$ by R3, R4(a) and R5(a). On the other hand,  $v$ is incident to $3-r$ many $5^+$-faces, and each of them gives $\frac{1}{3}$ to $v$ by R2(a).  Consequently, $v$ receives totally $(3-r)\times \frac{1}{3}$ from its incident $5^+$-faces, and totally $(r+1)\times \frac{1}{12}$ from its $6(5^-)$-neighbours. Thus,  $\mu^*(v)\geq (3-r)\times \frac{1}{3}+(r+1)\times \frac{1}{12}-\frac{1}{3}\geq 0$ after  $v$ sends $\frac{1}{3}$ to its incident $3$-face by R1. 

We now suppose that $m_3(v)=2$. Then, by Lemma \ref{6lem:4-2-vertex}, we have   $m_4(v)=r \leq 1$ and $n_6(v)\geq r+3$. Since $v$ is not adjacent to any $6(6)$-vertex, $v$ has at least $r+3$ many $6(5^-)$-neighbours, and each of them gives at least $\frac{1}{12}$ to $v$ by R3, R4(a) and R5(a). On the other hand, $v$ is incident to $2-r$ many $5^+$-faces, and each of them gives $\frac{1}{3}$ to $v$ by R2(a).  Consequently, $v$ receives totally $(2-r)\times \frac{1}{3}$ from its incident $5^+$-faces, and totally $(r+3)\times \frac{1}{12}$ from its $6$-neighbours. Thus,  $\mu^*(v)\geq (2-r)\times \frac{1}{3}+(r+3)\times \frac{1}{12}-2\times\frac{1}{3}\geq 0$ after  $v$ sends $\frac{1}{3}$ to each of its incident $3$-faces by R1.  \medskip

\textbf{(3).} Let $k=5$. We first note that if $m_3(v) \leq 3$, then  $\mu^*(v)\geq 1-3\times \frac{1}{3}=0$ after  $v$ sends $\frac{1}{3}$ to each of its incident $3$-faces by R1. Therefore, we may assume that $m_3(v)\geq 4$; namely, $v$ is a bad $5$-vertex. 

Let $m_3(v)=4$. If $m_4(v)=0$, then $v$ has at least two $6(5^-)$-neighbours by Lemma \ref{6lem:5-4-vertex-n4vleq2}(a), and $v$ receives at least $\frac{1}{9}$ from each of its $6(5^-)$-neighbours by R3, R4(b) and R5(b). Also, $v$ receives $\frac{1}{5}$ from its incident $5^+$-face by R2(b). Thus,  $\mu^*(v)\geq  1+ 2\times \frac{1}{9}+\frac{1}{5}-4\times\frac{1}{3}> 0$ after  $v$ sends $\frac{1}{3}$ to each of its incident $3$-faces by R1. 
Suppose now that $m_4(v)=1$. Then $v$ has at least three $6(5^-)$-neighbours by Lemma \ref{6lem:5-4-vertex-n4vleq2}(b). It follows from applying R3, R4(b) and R5(b) that $v$ receives at least $\frac{1}{9}$ from each of its $6(5^-)$-neighbours. Thus,  $\mu^*(v)\geq  1+3\times \frac{1}{9}-4\times\frac{1}{3}= 0$ after  $v$ sends $\frac{1}{3}$ to each of its incident $3$-faces by R1.

Let $m_3(v)=5$.  Recall that $v$ has no $6(6)$-neighbour by Lemma \ref{6lem:5-5-vertex-has-no-bad5}. Suppose first that there exists $v_iv_{i+1}\in E(v)$ such that $v_iv_{i+1}$ is contained in two $3$-faces. It then follows from Proposition \ref{6prop:5-5-vertex-edge-contained-3-faces}(a) that all neighbours of $v$ are $6(5^-)$-vertices, and each $vv_j$  for $v_j \in N(v)\setminus \{v_i,v_{i+1}\}$ is a special edge. This particularly implies that  each $v_j \in N(v)\setminus \{v_i,v_{i+1}\}$ is a $6(4^-)$-vertex.
Then, $v$ receives $\frac{1}{6}$ from each $v_j$ for $j\in [5]\setminus \{i,i+1\}$ by R3 and R4(c), and at least $\frac{1}{9}$ from each of $v_i,v_{i+1}$ by R3, R4(d) and R5(b). Thus, $v$ totally receives at least $\frac{2}{3}$ from its $6$-neighbours. 
Next we suppose that no edge $v_iv_{i+1}\in E(v)$ is contained in two $3$-faces. By Proposition \ref{6prop:5-5-vertex-edge-contained-3-faces}(b), there exist four $6(4^-)$-vertices $v_1,v_2,v_3,v_4 \in N(v)$ such that each $vv_i$ is a special edge. Then, $v$ receives $\frac{1}{6}$ from each $v_i$ for $i\in [4]$ by R3 and R4(c). Clearly, $v$  receives totally at least $\frac{2}{3}$ from its $6$-neighbours. 
Hence,  $\mu^*(v)\geq  1+ \frac{2}{3}-5\times\frac{1}{3}= 0$ after  $v$ sends $\frac{1}{3}$ to each of its incident $3$-faces by R1. \medskip

\textbf{(4).} Let $k=6$. Notice first that if $m_3(v) \leq 3$, then  $\mu^*(v)\geq 2- 3\times \frac{1}{3}-6\times \frac{1}{6}=0$ after  $v$ sends $\frac{1}{3}$ to each of its incident $3$-faces by R1, and at most $\frac{1}{6}$ to each of its neighbours by R3. In addition, if $m_3(v)=6$, then  $\mu^*(v)\geq 2-6\times \frac{1}{3} =0$ after  $v$ sends $\frac{1}{3}$ to each of its incident $3$-faces by R1.
Therefore we may assume that $4\leq m_3(v)\leq 5$. \medskip

\textbf{(4.1).} Let $m_3(v)=4$. Obviously, $v$ has one of the three configurations depicted in Figure \ref{6fig:6-4-config}.  Notice that $v$ only gives charge to its incident $3$-faces, and $4^-$-, $5(4^+)$-, $6(5)$-neighbours by R1, R4. 
We first note that if $v$ has no $5(5)$-neighbour $v_i$  such that $vv_i$ is a special edge, then $v$ gives at most $\frac{1}{9}$ to each of its neighbours by R4(a),(b),(d), and so   $\mu^*(v)\geq 2-6\times \frac{1}{9}-4\times\frac{1}{3}=0$ after  $v$ sends $\frac{1}{3}$ to each of its incident $3$-faces by R1.  We may further assume that $v$ has at least one $5(5)$-neighbour $v_i$ such that $vv_i$ is a special edge. In fact, we will determine the final charge of $v$ based on the number of $5(5)$-neighbors $v_i$ of $v$ for which each edge $vv_i$ is a special edge.
It can be easily observed that $v$ cannot have three $5(5)$-neighbours by Lemma \ref{6lem:5-5-vertex-has-no-bad5}.

\begin{figure}[htb]
\centering   
\subfigure[]{
\begin{tikzpicture}[scale=.7]
\node [nodr] at (0,0) (v) [label=above: {\scriptsize $v$}] {};
\node [nod2] at (1,-1.5) (v3) [label=below:{\scriptsize $v_3$}] {}
	edge  (v);		
\node [nod2] at (-1,-1.5) (v4) [label=below:{\scriptsize $v_4$}] {}
	edge [] (v);
\node [nod2] at (-2,0) (v5) [label=left:{\scriptsize $v_5$}]  {}
	edge [] (v)
	edge [] (v4);
\node [nod2] at (-1,1.5) (v6) [label=above:{\scriptsize $v_6$}] {}
	edge [] (v)
	edge [] (v5);	
\node [nod2] at (1,1.5) (v1) [label=above:{\scriptsize $v_1$}] {}
	edge [] (v);
\node [nod2] at (2,0) (v6) [label=right:{\scriptsize $v_2$}]  {}
	edge [] (v)
	edge [] (v3)
	edge [] (v1);				
\end{tikzpicture}  }\hspace*{1cm}
\subfigure[]{
\begin{tikzpicture}[scale=.7]
\node [nodr] at (0,0) (v) [label=above: {\scriptsize $v$}] {};
\node [nod2] at (1,-1.5) (v3) [label=below:{\scriptsize $v_3$}] {}
	edge  (v);		
\node [nod2] at (-1,-1.5) (v4) [label=below:{\scriptsize $v_4$}] {}
	edge [] (v)
	edge [] (v3);
\node [nod2] at (-2,0) (v5) [label=left:{\scriptsize $v_5$}]  {}
	edge [] (v);
\node [nod2] at (-1,1.5) (v6) [label=above:{\scriptsize $v_6$}] {}
	edge [] (v)
	edge [] (v5);	
\node [nod2] at (1,1.5) (v1) [label=above:{\scriptsize $v_1$}] {}
	edge [] (v);
\node [nod2] at (2,0) (v6) [label=right:{\scriptsize $v_2$}]  {}
	edge [] (v)
	edge [] (v3)
	edge [] (v1);				
\end{tikzpicture}  }\hspace*{1cm}
\subfigure[]{
\begin{tikzpicture}[scale=.7]
\node [nodr] at (0,0) (v) [label=above: {\scriptsize $v$}] {};
\node [nod2] at (1,-1.5) (v3) [label=below:{\scriptsize $v_3$}] {}
	edge  (v);		
\node [nod2] at (-1,-1.5) (v4) [label=below:{\scriptsize $v_4$}] {}
	edge [] (v)
	edge [] (v3);
\node [nod2] at (-2,0) (v5) [label=left:{\scriptsize $v_5$}]  {}
	edge [] (v)
	edge [] (v4);
\node [nod2] at (-1,1.5) (v6) [label=above:{\scriptsize $v_6$}] {}
	edge [] (v);	
\node [nod2] at (1,1.5) (v1) [label=above:{\scriptsize $v_1$}] {}
	edge [] (v);
\node [nod2] at (2,0) (v6) [label=right:{\scriptsize $v_2$}]  {}
	edge [] (v)
	edge [] (v3)
	edge [] (v1);				
\end{tikzpicture} }
\caption{Three configurations of $6(4)$-vertices.}
\label{6fig:6-4-config}
\end{figure}
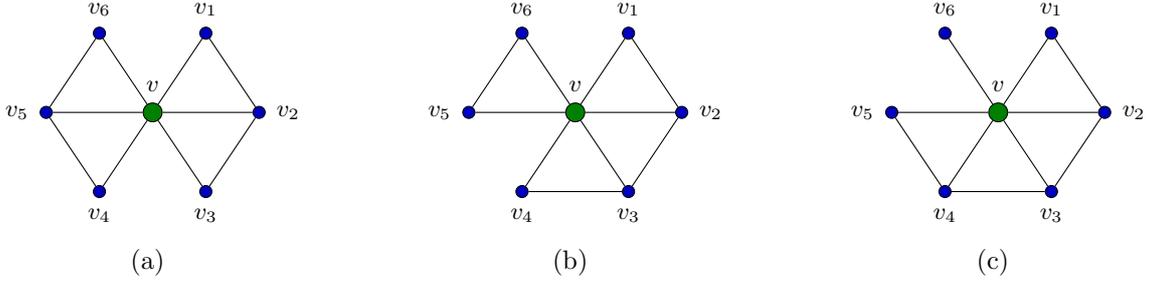

Suppose first that $v$ has two $5(5)$-neighbours $v_i $ and $ v_j$ such that both $vv_i$ and $vv_j$ are special edges. By Lemma \ref{6lem:5-5-vertex-has-no-bad5}, $v_i $ and $ v_j$  are non-adjacent, so the configuration in Figure \ref{6fig:6-4-config}(b) is not possible for $v$. Moreover, $v$ cannot have the form in Figure \ref{6fig:6-4-config}(c) as well, since $vv_i$ and $vv_j$ are special edges.
Hence, we deduce that the neighbours of $v$ can only form as depicted in Figure \ref{6fig:6-4-config}(a), where $v_i = v_2$ and $v_j = v_5$. By Lemmas \ref{6lem:3-vertex} and \ref{6lem:5-5-vertex-has-no-bad5}, $v_2$ and $v_5$ cannot have a $4^-$-neighbour, bad $5$-neighbour and $6(6)$-neighbour, so the other neighbours of $v$ are   $6(5^-)$-vertices or $5(3^-)$-vertices. 
Thus, $v$ gives $\frac{1}{6}$ to each of $v_2,v_5$ by R4(c), and  at most $\frac{1}{12}$ to each of the other neighbours of $v$ by R4(a). Consequently, $\mu^*(v)\geq 2- 2\times\frac{1}{6}-4\times \frac{1}{12}-4\times\frac{1}{3}=0$ after  $v$ sends $\frac{1}{3}$ to each of its incident $3$-faces by R1.

Now we suppose that $v$ has exactly one $5(5)$-neighbour $v_i$ such that $vv_i$ is a special edge. Then, similarly as above, we deduce that the neighbours of $v$ can only form as depicted in Figure \ref{6fig:6-4-config}(a) where $v_i=v_2$ (or $v_i=v_5$). By Lemmas \ref{6lem:3-vertex} and \ref{6lem:5-5-vertex-has-no-bad5}, $v_2$ cannot have a $4^-$-neighbour, bad $5$-neighbour and $6(6)$-neighbour, so the neighbours of $v_2$ other than $v$ are  $6(5^-)$-vertices or $5(3^-)$-vertices. Thus, $v$ gives $\frac{1}{6}$ to $v_2$,  at most $\frac{1}{12}$ to each of $v_1,v_3$ by R4(a),  at most $\frac{1}{9}$ to  each of $v_4,v_5,v_6$ by R4(a),(b),(d).  Hence,  $\mu^*(v)\geq 2- \frac{1}{6}-2\times \frac{1}{12}-3\times \frac{1}{9}-4\times\frac{1}{3}=0$ after  $v$ sends $\frac{1}{3}$ to each of its incident $3$-faces by R1.\medskip

\textbf{(4.2).} Let $m_3(v)=5$. By Lemma \ref{6lem:6-5-vertex}, we have $n_4(v)\leq 3$.

Let $n_4(v)=3$. Note that $v$ has no bad $5$-neighbours by Lemma \ref{6lem:6-5-vertex}(a). So, $v$ only gives charge to its bad $4$-neighbours and its incident $3$-faces.  Thus, $\mu^*(v)\geq 2-5\times \frac{1}{3}-3\times\frac{1}{12}>0$ after  $v$ sends $\frac{1}{3}$ to each of its incident $3$-faces by R1, and  $\frac{1}{12}$ to each of its bad $4$-neighbours by R5(a).

Let $n_4(v)=2$. By Lemma \ref{6lem:6-5-vertex}(b), $v$ has at most two bad $5$-neighbours. If $v$ has such two neighbours, then the remaining neighbours of $v$ are $6(4^-)$-vertices by Lemma \ref{6lem:6-5-vertex}(c), i.e., $v$ has two $6(4^-)$-neighbours. In such a case, $v$ gives  $\frac{1}{12}$ to each of its bad $4$-neighbours by R5(a), and  $\frac{1}{9}$ to each of its bad $5$-neighbours by R5(b). On the other hand, $v$ receives at least $\frac{1}{12}$ from each of its $6(4^-)$-neighbours by R3 and R4(a). Thus,  $\mu^*(v)\geq 2- 2\times \frac{1}{12}-2\times \frac{1}{9}+2\times \frac{1}{12}-5\times\frac{1}{3}> 0$ after  $v$ sends $\frac{1}{3}$ to each of its incident $3$-faces by R1. Suppose now that $v$ has at most one bad $5$-neighbour. Then, $v$ gives  $\frac{1}{12}$ to each of its bad $4$-neighbours by R5(a), and  $\frac{1}{9}$ to its bad $5$-neighbour R5(b) (if exists).  Thus,  $\mu^*(v)\geq 2- 2\times \frac{1}{12}-\frac{1}{9}-5\times\frac{1}{3}>0$ after  $v$ sends $\frac{1}{3}$ to each of its incident $3$-faces by R1.

Let $n_4(v)=1$. By Lemma \ref{6lem:6-5-vertex}(d), $v$ has at most three bad $5$-neighbours. If $v$ has such three neighbours, then $v$ has a $6(4^-)$-neighbour by Lemma \ref{6lem:6-5-vertex}(e), and so  $v$ receives at least  $\frac{1}{12}$ from  its $6(4^-)$-neighbour by R3 and R4(a). On the other hand, $v$ gives  $\frac{1}{12}$ to its bad $4$-neighbour by R5(a), and  $\frac{1}{9}$ to each of its bad $5$-neighbours by R5(b).  Thus,  $\mu^*(v)\geq 2+\frac{1}{12}- \frac{1}{12}-3\times \frac{1}{9}-5\times\frac{1}{3}= 0$ after  $v$ sends $\frac{1}{3}$ to each of its incident $3$-faces by R1. Suppose now that $v$ has at most two bad $5$-vertices. Then,    $\mu^*(v)\geq 2- \frac{1}{12}-2\times\frac{1}{9}-5\times\frac{1}{3}> 0$ after  $v$ sends   $\frac{1}{12}$ to its bad $4$-neighbour R5(a), $\frac{1}{9}$ to each of its bad $5$-neighbours R5(b), and $\frac{1}{3}$ to each of its incident $3$-faces by R1.

Let $n_4(v)=0$. Notice first that $v$ has at most four bad $5$-neighbours by Lemma \ref{6lem:6-5-vertex}(f). If $v$ has at most three bad $5$-neighbours, then $v$ gives  $\frac{1}{9}$ to each of its bad $5$-neighbours by R5(b), and so  $\mu^*(v)\geq 2- 3\times\frac{1}{9}-5\times\frac{1}{3}= 0$ after  $v$ sends $\frac{1}{3}$ to each of its incident $3$-faces by R1. 
Assume further that $v$ has exactly four bad $5$-neighbours. 
If $m_4(v)=0$, i.e, $v$ is incident to a $5^+$-face, then  $v$ receives $\frac{1}{5}$ from its incident $5^+$-face by R2(c).  Thus, $\mu^*(v)\geq 2+\frac{1}{5}- 4\times\frac{1}{9}-5\times\frac{1}{3}> 0$ after  $v$ gives $\frac{1}{9}$ to each of its bad $5$-neighbours by R5(b), and $\frac{1}{3}$ to each of its incident $3$-faces by R1. 
If  $m_4(v)=1$, then $v$ has two $6(4^-)$-neighbours by Lemma \ref{6lem:6-5-vertex}(g), and so $v$ receives at least $\frac{1}{12}$ from each of its $6(4^-)$-neighbours by R3 and R4(a).   Thus,  $\mu^*(v)\geq 2+2\times \frac{1}{12}-4\times \frac{1}{9}-5\times\frac{1}{3}> 0$ after  $v$ sends  $\frac{1}{9}$ to each of its bad $5$-neighbours by R5(b), and $\frac{1}{3}$ to each of its incident $3$-faces by R1. \medskip





\subsection{The case \texorpdfstring{$\D=7$}{D7} } \label{sub:7}~~\medskip

Recall that $G$ does not admit any $2$-distance $21$-coloring, whereas any planar graph $G'$ obtained from $G$ with a smaller value of $|V(G')| + |E(G')|$ admits a $2$-distance $21$-coloring.  
We begin by establishing several structural properties of $G$, similar to those in the case $\Delta = 6$.

The proof of Lemma~\ref{7lem:min-deg-3} is omitted, as it is similar to the proof of Lemma~\ref{6lem:min-deg-4}.

\begin{lemma}\label{7lem:min-deg-3}
$\delta(G) \ge 3$.
\end{lemma}

\begin{lemma}\label{7lem:3-vertex}
If $v$ is a $3$-vertex, then $m_3(v)=0$, $m_4(v) \le 1$, and each neighbour of $v$ is a $7(5^-)$-vertex.
\end{lemma}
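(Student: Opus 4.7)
The plan is to mirror Lemma~\ref{6lem:3-vertex}, adjusting the bounds to the palette of size $2\Delta + 7 = 21$: in each sub-claim we target $d_2(v) \leq 20$ and construct a planar auxiliary graph $G' = G - v + E_0$, where $E_0$ is a small edge set that preserves pairwise distance at most $2$ among the neighbours of $v$, and such that $\Delta(G') \leq 7$ and $G'$ is proper with respect to $G$. Minimality then yields a $2$-distance $21$-colouring of $G'$, and $d_2(v) \leq 20$ leaves a free colour for $v$, contradicting the non-colourability of $G$.

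For $m_3(v) = 0$, suppose to the contrary that $v_1 v v_2$ is a $3$-face, so $v_1 v_2 \in E(G)$. This edge forces overlaps in $N_2(v)$ and one computes
\[
d_2(v) \leq 3 + (d(v_1) - 2) + (d(v_2) - 2) + (d(v_3) - 1) \leq 3 + 5 + 5 + 6 = 19,
\]
so $E_0 = \{v_1 v_3\}$ (added only if absent) works. Similarly, assuming $m_4(v) \geq 2$ produces two incident $4$-faces $v_1 v v_2 x$ and $v_2 v v_3 y$ with $x \in N(v_1) \cap N(v_2)$ and $y \in N(v_2) \cap N(v_3)$, yielding $d_2(v) \leq 3 + 3 \cdot 6 - 2 = 19$ by inclusion–exclusion, and the same $E_0$ suffices.

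For the neighbourhood structure, since $m_3(v) = 0$ both faces incident to each edge $v v_i$ are $4^+$-faces, so a degree-$7$ neighbour has at most $7 - 2 = 5$ incident $3$-faces; this excludes $7(6^+)$-neighbours outright. To exclude $6^-$-neighbours, suppose $v_1$ satisfies $d(v_1) \leq 6$; then
\[
d_2(v) \leq 3 + (d(v_1) - 1) + (d(v_2) - 1) + (d(v_3) - 1) \leq 3 + 5 + 6 + 6 = 20,
\]
and $E_0 = \{v_1 v_2, v_1 v_3\}$ (omitting any already-present edges) gives a proper $G'$ with $d_{G'}(v_1) \leq (d(v_1) - 1) + 2 \leq 7$, so $\Delta(G') \leq 7$. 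Minimality then extends the colouring to $v$, giving the final contradiction.

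The only delicate point is the degree bookkeeping in the $6^-$-neighbour case, which is tight: the computation must land exactly at $d_2(v) = 20$. This parallels the tight $\leq 18$ bound used repeatedly in Subsection~\ref{sub:6}, so no essential new idea is required beyond reworking the arithmetic with $\Delta = 7$.
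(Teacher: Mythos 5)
Your proof is correct and follows essentially the same route as the paper: rule out a $3$-face, two $4$-faces, and a $6^-$-neighbour by bounding $d_2(v)$ (by $19$, $19$, and $20$, respectively, matching the paper's bounds) and forming the same auxiliary graphs $G - v + \{v_1v_3\}$ and $G - v + \{v_1v_2, v_1v_3\}$, then invoking minimality. You add explicit $d_2$ arithmetic and an explicit $\Delta(G')\leq 7$ check that the paper leaves implicit, but these are only presentational differences.
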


\begin{proof}
Let $v$ be a $3$-vertex.
Assume first that $v$ is incident to a $3$-face, say $f_1 = v_1 v v_2$.  
By Lemma~\ref{lem:d2v}, we have $d_2(v) \le 19$.  
Setting $G' = G - v + \{v_1 v_3\}$ yields a graph that is proper with respect to $G$, contradicting Remark~\ref{rem:proper}.  
Thus $m_3(v)=0$.

Next, suppose that $m_4(v) \ge 2$, i.e., $v$ is incident to two $4$-faces, say  
$f_1 = v_1 v v_2 x$ and $f_2 = v_2 v v_3 y$.  
Again, Lemma~\ref{lem:d2v} gives $d_2(v) \le 19$.  
The graph $G' = G - v + \{v_1 v_3\}$ is proper with respect to $G$, giving the same contradiction.  
Hence $m_4(v) \le 1$.

Finally, we show that each neighbour of $v$ must be a $7(5^-)$-vertex.  
Since $m_3(v)=0$, the vertex $v$ cannot have a $7(6^+)$-neighbour.  
Moreover, $v$ cannot have any $6^-$-neighbour; otherwise,  
if $v_1$ is such a neighbour, then considering $G' = G - v + \{v_1 v_2, v_1 v_3\}$ gives $d_2(v) \le 20$ by Lemma~\ref{lem:d2v}, and $G'$ is proper with respect to $G$, again a contradiction.
Thus all neighbours of $v$ are $7(5^-)$-vertices.
\end{proof}

\begin{lemma}\label{7lem:4-vertex-m3v}
If $v$ is a $4$-vertex, then $m_3(v) \le 3$. In particular, $G$ has no $4(4)$-vertex.
\end{lemma}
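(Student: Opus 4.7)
The plan is to proceed by contradiction, following the same template used in the analogous statement for $\Delta=6$ (Lemma~\ref{6lem:4-vertex-m3v}) and exploiting the fact that the colour budget $2\Delta+7 = 21$ is one larger than the straightforward upper bound I will obtain for $d_2(v)$.

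Suppose toward a contradiction that $v$ is a $4(4)$-vertex with neighbours $v_1, v_2, v_3, v_4$ in clockwise order. Then each incident face is a triangle $f_i = v_i v v_{i+1}$ (indices cyclically modulo~$4$), which forces the four edges $v_iv_{i+1}$ to be present in $G$. In particular, $N(v)$ already induces a $4$-cycle.

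The first step I would carry out is to bound $d_2(v)$. Each $v_i$ has degree at most $\Delta = 7$, and three of its neighbours, namely $v$, $v_{i-1}$, and $v_{i+1}$, are already accounted for, so $v_i$ contributes at most $\Delta - 3 = 4$ new vertices to $N_2(v) \setminus N(v)$. Hence $d_2(v) \leq 4 + 4\cdot 4 = 20 < 21 = 2\Delta+7$. The second step is to construct a proper auxiliary graph of smaller size. Setting simply $G' = G - v$, I would verify that $G'$ is proper with respect to $G$: the only pairs at distance $\leq 2$ in $G$ whose shortest path must go through $v$ lie in $N(v)$, and among such pairs the four cyclically adjacent ones remain edges in $G'$, while the two diagonal pairs $\{v_1, v_3\}$ and $\{v_2, v_4\}$ are each at distance exactly $2$ in $G'$ via the other two neighbours of $v$ (these length-two routes exist precisely because $N(v)$ induces a $4$-cycle in $G$). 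Planarity of $G'$ and the decrease in $|V|+|E|$ are immediate. By minimality, $G'$ admits a 2-distance $21$-coloring, and since at most $20$ colours appear in $N_2(v)$ there remains at least one colour free for $v$, producing the desired contradiction.

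The main, and essentially only, subtle point I expect is the propriety check for $G' = G - v$. Unlike the $\Delta=6$ analogue, where the case $m_3(v) = 3$ required adding an auxiliary edge to bring the two extreme neighbours of $v$ back within distance $2$ after deletion, here the hypothesis $m_3(v) = 4$ closes $N(v)$ into a full $4$-cycle, so every diagonal pair is already at distance $2$ in $G-v$ and no edge addition is needed. The remaining verifications — the bound $d_2(v)\leq 20$, planarity, and the size decrease — are routine and parallel the corresponding parts of Lemma~\ref{6lem:4-vertex-m3v}.
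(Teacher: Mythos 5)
Your proof is correct and follows essentially the same approach as the paper: assume $m_3(v)=4$, observe $d_2(v)\leq 20 < 21$, set $G'=G-v$, and invoke minimality. You simply spell out the arithmetic behind $d_2(v)\leq 20$ and the propriety of $G-v$ (via the induced $4$-cycle on $N(v)$), both of which the paper leaves implicit.
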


\begin{proof}
Let $v$ be a $4$-vertex.  
Assume that $m_3(v)=4$.  
Then $d_2(v) \le 20$ by Lemma~\ref{lem:d2v}.  
If we take $G' = G - v$, then $G'$ is proper with respect to $G$, contradicting Remark~\ref{rem:proper}.  
Thus $m_3(v) \le 3$.
\end{proof}

We now introduce some vertices of special types:  
a $4(1)$-, $4(2)$-, or $4(3)$-vertex is called a \emph{bad $4$-vertex}, and a $5(4)$- or $5(5)$-vertex is called a \emph{bad $5$-vertex}.

\begin{lemma}\label{7lem:4-vertex-neighbours}
Let $v$ be a $4$-vertex.
\begin{itemize}
\item[$(a)$] $v$ has no $5(5)$-neighbour.
\item[$(b)$] If $m_3(v) \ge 1$, then $v$ has at most one $4$-neighbour.
\item[$(c)$] If $m_3(v) \ge 1$ and $v$ has a $4$-neighbour, then $v$ cannot have any $5(4)$-neighbour.
\end{itemize}
\end{lemma}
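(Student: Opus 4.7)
My plan is to prove each of (a), (b), (c) by contradiction, following the template used throughout the $\Delta=6$ subsection: assume the forbidden configuration exists at $v$, establish $d_2(v)\le 20$, and construct a planar graph $G'$ that is proper with respect to $G$ and satisfies $|V(G')|+|E(G')|<|V(G)|+|E(G)|$. The minimality of $G$ then yields a $2$-distance $21$-coloring of $G'$, which extends to $G$ by assigning $v$ any color not used on its at most $20$ distance-$2$ vertices, contradicting that $G$ is a counterexample.

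For (a), suppose $v_1$ is a $5(5)$-neighbour of $v$. Since every face at $v_1$ is a triangle, both faces at $v_1$ incident to $vv_1$ are triangles, say $vv_1v_2$ and $vv_1v_4$; labelling the remaining two neighbours of $v_1$ as $u_1,u_2$, the wheel structure at $v_1$ forces $v_2u_1, u_1u_2, u_2v_4 \in E(G)$. The new distance-$2$ vertices contributed by $v_1$ are just $u_1,u_2$, while $v_2$ and $v_4$ each contribute at most $d(v_i)-3\le 4$ after subtracting $v$, $v_1$, and the already-counted $u_1$ or $u_2$, and $v_3$ contributes at most $d(v_3)-1\le 6$. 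Summing gives $d_2(v)\le 4+2+4+4+6=20$. Setting $G'=G-v+\{v_1v_3\}$ (added only if not already an edge) yields a planar graph proper with respect to $G$, since all distance-$2$ pairs in $N(v)$ are covered through $v_1$.

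For (b), let $v_1,v_2$ be two $4$-neighbours and $vv_iv_j$ a triangle at $v$. The two $4$-neighbours contribute at most $3$ new distance-$2$ vertices each, the remaining two neighbours contribute at most $6$ each, and the triangle edge $v_iv_j$ reduces the contributions of $v_i$ and $v_j$ by $1$ each. Thus $d_2(v)\le 4+(3+3+6+6)-2=20$ in every sub-case of the triangle's location, and $G'=G-v+(\{v_1v_2,v_1v_3,v_1v_4\}\setminus E(G))$ is planar and proper. For (c), $v_1$ is a $4$-neighbour (contribution at most $3$) and $v_2$ is a $5(4)$-neighbour; since $v_2$ has only one non-triangular face, at least one face at $v_2$ incident to $vv_2$ is a triangle, forcing $|N(v_2)\cap N(v)|\ge 1$. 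Crucially, the chain of three further triangles around $v_2$ produces a vertex $z\in N(v_1)\cap N(v_2)\setminus N(v)$ in the tightest sub-case, providing a second overlap that brings the count to $d_2(v)\le 20$.

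The main obstacle is part (c): the naive degree-sum bound exceeds $20$, and even accounting for the one shared neighbour forced by the $5(4)$-structure at $v_2$ still leaves a gap. The extra vertex $z$ adjacent to both $v_1$ and $v_2$ comes from the wheel chain at $v_2$ (specifically, when the non-triangular face of $v_2$ lies on one side of $vv_2$ and $v_1$ sits on the other, the three triangles along the chain between them force such a $z$). Verifying that $z$ exists in every sub-configuration based on which faces at $v_2$ incident to $vv_2$ are triangles, and that the appropriate $G'$ is planar and proper, completes the argument.
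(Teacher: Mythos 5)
Your approach is exactly the paper's: for each item, argue $d_2(v)\le 20$ and construct a planar graph $G'$ proper with respect to $G$ with smaller $|V|+|E|$, so that minimality yields a $21$-coloring that extends to $v$. For parts (a) and (b), your vertex counts are correct and your choices of $G'$ — $G-v+\{v_1v_3\}$ for (a), $G-v+\{v_1v_2,v_1v_3,v_1v_4\}$ for (b) — match the paper's construction (up to relabeling). The paper only asserts $d_2(v)\le 20$ without the careful bookkeeping you supply, so your version of (a) and (b) is fuller and sound.

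For part (c), you correctly identify that the naive bound is $21$ and that the shortfall is repaired by an extra overlap arising from the wheel structure at the $5(4)$-neighbour. The bound $d_2(v)\le 20$ does hold in every configuration, so your conclusion is right. However, the specific claim that the overlap vertex lies in $N(v_1)\cap N(v_2)\setminus N(v)$, where $v_1$ is the $4$-neighbour, is only accurate in the sub-case where the forced triangle on $vv_2$ is $vv_1v_2$. In general, the forced triangle is $vv_2z$ with $z$ one of the two cyclic neighbours of $v_2$ around $v$, and $z$ need not be the $4$-neighbour (e.g., when $v_1$ and $v_2$ are opposite in the cyclic order around $v$, or the triangle goes toward $v_3$). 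When $z\neq v_1$, the wheel chain at $v_2$ instead produces a vertex $a$ with $za\in E$ and $a\in N(z)\cap N(v_2)$; the $4$-neighbour's low degree and the overlap at $z$ together still give $d_2(v)\le 20$, but the saving is not localized at $v_1$. You flag that all sub-configurations must be checked, so this is an imprecision in the stated invariant rather than a genuine gap, but the phrase ``$z\in N(v_1)\cap N(v_2)$'' should be replaced with ``$z\in N(u)\cap N(v_2)$ where $u$ is the common neighbour of $v$ and $v_2$ on the forced triangle.''
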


\begin{proof}
$(a)$ Suppose, for a contradiction, that $v$ has a $5(5)$-neighbour, say $v_2$.  
We may therefore assume that each of $v_1v_2,v_2v_3$ is contained in two $3$-faces.  
By Lemma~\ref{lem:d2v}, we have $d_2(v) \le 20$.  
Setting $G' = G - v + \{ v_2 v_4 \}$ yields a graph that is proper with respect to $G$, contradicting Remark~\ref{rem:proper}.

\smallskip
$(b)$ Let $m_3(v) \ge 1$.  
Suppose that $v$ has two $4$-neighbours, and let $v_1$ be one of them.  
Then $d_2(v) \le 20$ by Lemma~\ref{lem:d2v}.  
Taking $G' = G - v + \{ v_1 v_2, v_1 v_3, v_1 v_4 \}$ gives a graph that is proper with respect to $G$, a contradiction.

\smallskip
$(c)$ Let $m_3(v) \ge 1$.  
Suppose that $v$ has both a $4$-neighbour and a $5(4)$-neighbour.  
Then Lemma~\ref{lem:d2v} again gives $d_2(v) \le 20$, and the same reduction as in part (b) contradicts Remark~\ref{rem:proper}.
\end{proof}

\begin{lemma}\label{7lem:4-1-vertex-neighbours}
Let $v$ be a $4(1)$-vertex.
\begin{itemize}
\item[$(a)$] If $m_4(v)=2$, then $v$ has a $7$-neighbour.  
In particular, $v$ has either a $7(5^-)$-neighbour or two $7(6)$-neighbours.
\item[$(b)$] If $m_4(v)=3$, then $v$ has two $7(6^-)$-neighbours.
\end{itemize}
\end{lemma}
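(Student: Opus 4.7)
The plan is to argue by contradiction using the minimality of $G$, mirroring the template of the preceding lemmas in this subsection. Denote the neighbours of $v$ by $v_1,v_2,v_3,v_4$ in cyclic order, and let $f_1=vv_1v_2$ be the unique incident $3$-face, so that the other three faces $f_2,f_3,f_4$ (between the consecutive pairs $(v_2,v_3)$, $(v_3,v_4)$, $(v_4,v_1)$ at $v$) are all of length $\geq 4$. The baseline estimate is
\[
d_2(v)\ \leq\ \sum_{i=1}^{4} d(v_i)\ -\ 2\,m_3(v)\ -\ m_4(v)\ -\ (\text{extra identifications}),
\]
where the $2\,m_3(v)$ savings come from the edges $v_iv_{i+1}$ forced by $3$-faces at $v$ and each $4$-face at $v$ saves one more through a shared external neighbour. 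Once we reach $d_2(v)\leq 20$, we delete $v$ from $G$, add a small set of edges among $N(v)$ so that the resulting $G'$ is proper with respect to $G$, invoke minimality to obtain a $2$-distance $21$-colouring of $G'$, and extend the colouring to $v$ using an available colour, contradicting the minimal choice of $G$.

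A structural observation drives the refinement: $v_3$ and $v_4$ are each incident to two non-$3$-faces of $v$ (namely $f_2,f_3$ and $f_3,f_4$), so any $7$-vertex among $\{v_3,v_4\}$ has $m_3\leq 5$, i.e.\ is a $7(5^-)$-vertex. Similarly $v_1,v_2$ are each incident to exactly one non-$3$-face of $v$, so any $7$-vertex among $\{v_1,v_2\}$ is $7(6^-)$. For part $(a)$: if $v$ has no $7$-neighbour then $\sum d(v_i)\leq 24$ and $d_2(v)\leq 20$, contradiction. Otherwise, suppose for contradiction that $v$ has no $7(5^-)$-neighbour and at most one $7(6)$-neighbour. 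By the observation, $v_3,v_4$ cannot be $7$-vertices, and without loss of generality $v_1$ is $7(6)$ while $v_2,v_3,v_4$ are $6^-$-vertices, so $\sum d(v_i)\leq 25$. Now the $7(6)$-structure of $v_1$ kicks in: of the seven face-incidences at $v_1$, the unique non-$3$-face is $f_4$ (which shares the edge $vv_1$ with $f_1$ but lies on the opposite side from $v_2$), so each face between consecutive neighbours of $v_1$ in its rotation $v_2,y_1,\ldots,y_5,v$ is a $3$-face. In particular $v_1v_2y_1$ is a $3$-face, producing a common neighbour $y_1\in N(v_1)\cap N(v_2)\setminus\{v\}$. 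Since the $4$-face savings account only for shared neighbours of the pairs $(v_2,v_3),(v_3,v_4),(v_4,v_1)$, this $y_1$ yields an additional identification — a short case analysis ($y_1$ external and new, $y_1$ coinciding with an existing $4$-face vertex, or $y_1\in\{v_3,v_4\}$) confirms at least one extra saving. Hence $d_2(v)\leq 25-5=20$, and the standard minimality argument finishes the job.

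For part $(b)$, the face arrangement at $v$ is completely determined (one $3$-face and three $4$-faces), giving $d_2(v)\leq \sum d(v_i)-5$. The same structural observation shows that every $7$-neighbour is $7(6^-)$, so it suffices to prove that $v$ has at least two $7$-neighbours. If to the contrary $v$ has at most one $7$-neighbour, then $\sum d(v_i)\leq 7+3\cdot 6=25$, so $d_2(v)\leq 20$ and the standard construction of $G'$ yields the contradiction. The main obstacle will be in part $(a)$: carefully verifying that the shared neighbour $y_1$ forced by the $7(6)$-structure of $v_1$ is never entirely absorbed by the $4$-face identifications (so that it truly provides a new saving), and then selecting the right edges to add to $G'$ in each of the three arrangements of the two $4$-faces among $f_2,f_3,f_4$ so that $G'$ is planar and proper with respect to $G$.
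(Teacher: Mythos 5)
Your proposal is correct and follows essentially the same route as the paper: bound $d_2(v)$ using the $3$-face and $4$-face identifications, observe that only $v_1,v_2$ can be $7(6)$-vertices since $v_3,v_4$ meet two non-triangle faces at $v$, and extract the extra common neighbour $y_1\in N(v_1)\cap N(v_2)$ forced by the $7(6)$-structure of $v_1$ to reach $d_2(v)\le 20$, then delete $v$, add edges among $N(v)$ to keep $G'$ proper, and recolour. The only cosmetic difference is that the paper uses the single fixed edge set $\{v_2v_3,v_3v_4,v_1v_4\}$ (resp.\ $\{v_2v_3,v_1v_4\}$) irrespective of which faces among $f_2,f_3,f_4$ are $4$-faces, so the case-by-case edge selection you flag as a potential obstacle is unnecessary.
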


\begin{proof}
Let $f_1 = v_1 v v_2$.

$(a)$ Suppose $m_4(v)=2$.  
We first show that $v$ has a $7$-neighbour.  
Assume all neighbours of $v$ are $6^-$-vertices.  
Then $d_2(v) \le 20$ by Lemma~\ref{lem:d2v}.  
Setting $G' = G - v + \{ v_2 v_3, v_3 v_4, v_1 v_4 \}$ gives a graph that is proper with respect to $G$, contradicting Remark~\ref{rem:proper}.  
Thus $v$ has at least one $7$-neighbour.

Now assume $v$ has no $7(5^-)$-neighbour.  
Since $m_3(v)=1$, $v$ cannot have any $7(7)$-neighbour, so it must have a $7(6)$-neighbour.  
If the remaining neighbours of $v$ are $6^-$-vertices, then again $d_2(v) \le 20$ by Lemma~\ref{lem:d2v}, and the same replacement as above yields a  graph that is proper with respect to $G$.  
Thus $v$ must have a second $7(6)$-neighbour.

\smallskip
$(b)$ Suppose $m_4(v)=3$.  
We first show that $v$ has two $7$-neighbours.  
Assume three neighbours of $v$ are $6^-$-vertices.  
Then $d_2(v) \le 20$ by Lemma~\ref{lem:d2v}.  
Setting $G' = G - v + \{ v_2 v_3, v_1 v_4 \}$ produces a graph that is proper with respect to $G$,  
contradicting Remark~\ref{rem:proper}.   
Thus $v$ has two $7$-neighbours, and since it cannot have any $7(7)$-neighbour, these neighbours must be $7(6^-)$-vertices.
\end{proof}

\begin{lemma}\label{7lem:4-2-vertex-neighbours}
Let $v$ be a $4(2)$-vertex. 
\begin{itemize}
\item[$(a)$] If $m_4(v)=0$, then $v$ has either a $7(5^-)$-neighbour and a $6(4^-)$-neighbour, or two $7(6^-)$-neighbours.
\item[$(b)$] If $m_4(v)=1$, then $v$ has either two $7(6^-)$-neighbours and two $6(4^-)$-neighbours, or three $7(6^-)$-neighbours.
\item[$(c)$] If $m_4(v)=2$, then $v$ has either two $7(5^-)$-neighbours and two $7(6)$-neighbours, or three $7(5^-)$-neighbours.
\end{itemize}
\end{lemma}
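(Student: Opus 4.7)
The plan is to establish each of (a), (b), (c) by contradiction, following the pattern of Lemmas~\ref{6lem:4-1-vertex}, \ref{6lem:4-2-vertex}, and \ref{7lem:4-1-vertex-neighbours}. For each part I assume the stated conclusion fails, verify that $d_2(v)\le 20$, and then delete $v$ from $G$ and reinsert a short list of chords among $v_1, v_2, v_3, v_4$ so that the resulting planar graph $G'$ is proper with respect to $G$. By minimality, $G'$ admits a $2$-distance $21$-coloring, which extends to $G$ because $v$ has at most $20$ forbidden colors, contradicting the choice of $G$.

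The arithmetic driving every case starts from the inequality
\[
d_2(v) \;\le\; \sum_{i=1}^{4} d(v_i) \;-\; 2\,|E(G[N(v)])| \;-\; m_4(v),
\]
in which each edge among the $v_i$'s collapses a neighbour into the first-neighbourhood (save $2$) and each $4$-face at $v$ supplies a shared external second-neighbour (save $1$). Since $m_3(v)=2$ forces at least two edges in $G[N(v)]$, the baseline for all-degree-$7$ neighbours reduces to $24-r$ with $r=m_4(v)$, so parts (a), (b), (c) require additional savings of $4$, $3$, $2$, respectively, each of which may come either from a neighbour of degree below $7$ or from a further overlap among the second-neighbourhoods.

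The key structural observation is that $v_i$ can be $7(7)$ only when both faces at $v$ incident to $v_iv$ are $3$-faces, so $m_3(v)=2$ allows at most one $7(7)$-neighbour and only in the adjacent-$3$-face configuration. A $7(7)$-neighbour $v_i$ forces each of the edges $v_{i-1}v_i$ and $v_iv_{i+1}$ to lie in an additional external $3$-face, yielding two extra shared second-neighbours; a $7(6)$-neighbour gives at least one analogous save unless its unique non-$3$-face sits at $v$. Combined with the fact that the negation of each stated conclusion caps the number of $7(6^-)$- and $7(5^-)$-neighbours and therefore forces the remaining neighbours to be $6^-$-vertices, these structural savings together with the degree savings bring $d_2(v)$ down to at most $20$ in every sub-case. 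The bridging edges needed for $G'$ are chosen as in Lemma~\ref{6lem:4-2-vertex}---for instance $\{v_1v_4\}$ in the adjacent-$3$-face configuration and $\{v_1v_4, v_2v_3\}$ in the non-adjacent one---and the resulting $G'$ is easily checked to be planar and proper with respect to $G$.

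I expect the main obstacle to be the bookkeeping in parts (a) and (b): one has to split on the two $3$-face configurations, on the placement of the $4$-face(s) relative to the $3$-faces, and on the small number of distributions of $7(7)$-, $7(6)$-, $7(5^-)$-, and $6^-$-neighbours permitted by the negation, verifying in each branch that the combined degree and overlap savings reach the required threshold. No new deletion/insertion technique beyond those already appearing in the preceding lemmas seems to be needed.
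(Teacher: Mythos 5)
Your proposal takes essentially the same route as the paper: assume the conclusion fails, bound $d_2(v)\le 20$, delete $v$ and add bridging chords among $v_1,\dots,v_4$ so that $G'$ is proper with respect to $G$, and invoke minimality. The explicit inequality $d_2(v)\le\sum_i d(v_i)-2|E(G[N(v)])|-m_4(v)$ and the tally of extra saves contributed by $7(7)$-, $7(6)$-, or $6^-$-neighbours is a cleaner bookkeeping device than the paper's ad hoc per-case bounds, and the structural observations you list (e.g.\ that a $7(7)$-neighbour is possible only when both $3$-faces at $v$ are adjacent, and that it forces two extra external triangles) are exactly the ones the paper exploits.

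Two caveats. First, your example chord set for the adjacent-$3$-face configuration is wrong: with $f_1=v_1vv_2$ and $f_2=v_2vv_3$ as the two triangles, adding only $\{v_1v_4\}$ leaves $v_3$ and $v_4$ at distance $3$ in $G'$ (via $v_3v_2v_1v_4$), so $G'$ need not be proper. The correct chord is $\{v_2v_4\}$, as in the paper and in Lemma~\ref{6lem:4-2-vertex} which you cite (your non-adjacent chord set $\{v_1v_4,v_2v_3\}$ is fine). Second, you stop at the plan: you have not actually run through the branches to confirm that the combined degree and overlap savings reach the required threshold under the negation of each of (a), (b), (c). That case analysis is the bulk of the paper's proof, particularly in part (b) where one must further split on how many of the edges $v_iv_{i+1}$, $v_jv_{j+1}$ lie in two triangles. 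As a proof sketch the approach is sound and matches the paper, but the verification step is still outstanding.
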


\begin{proof}
Let $f_i = v_i v v_{i+1}$ and $f_j = v_j v v_{j+1}$ be the two $3$-faces incident to $v$, with $i<j$ (see Figure~\ref{7fig:4-2-config}).

Whenever $d_2(v) \le 20$, we reduce $G$ as follows:
If $(i,j) = (1,2)$, we set $G' = G - v + \{ v_2 v_4 \}$.
If $(i,j) = (1,3)$, we set $G' = G - v + \{ v_2 v_3, v_1 v_4 \}$.
In each case, $G'$ is proper with respect to $G$, contradicting Remark~\ref{rem:proper}.  
Therefore, throughout the proof we may assume  
$d_2(v) \ge 21$.

$(a)$ Let $m_4(v)=0$.
Suppose first that $v$ has a $7(7)$-neighbour.  
Then $v$ cannot have two $6^-$-neighbours; otherwise $d_2(v) \le 20$ by Lemma \ref{lem:d2v}, a contradiction.  
Thus $v$ has two $7(6^-)$-neighbours.

Now suppose $v$ has no $7(7)$-neighbour.  
Since $v$ cannot have all neighbours of degree at most $6$, it follows that $v$ has at least one $7$-neighbour; let it be $v_r$.  
Assume that $v$ has exactly one $7(6^-)$-neighbour (otherwise the conclusion already holds).  
Then all other neighbours of $v$ are $6^-$-vertices.  
If $v_r$ were a $7(6)$-vertex, or if $v$ had any $6(5^+)$-neighbour, then again $d_2(v)\le 20$ by Lemma \ref{lem:d2v}, a contradiction.  
Hence $v_r$ must be a $7(5^-)$-vertex, and $v$ must have a $6(4^-)$-neighbour.

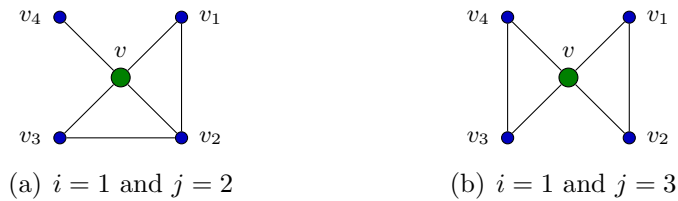
\begin{figure}[htb]
\centering   
\subfigure[$i=1$ and $j=2$]{
\begin{tikzpicture}[scale=.8]
\node [nodr] at (0,0) (v) [label=above: {\scriptsize $v$}] {};
\node [nod2] at (1,-1) (v3) [label=right: {\scriptsize $v_2$}] {}
	edge  (v);		
\node [nod2] at (-1,-1) (v4) [label=left: {\scriptsize $v_3$}] {}
	edge [] (v)
	edge [] (v3);
\node [nod2] at (-1,1) (v6) [label=left: {\scriptsize $v_4$}] {}
	edge [] (v);	
\node [nod2] at (1,1) (v1) [label=right: {\scriptsize $v_1$}] {}
	edge [] (v)
	edge [] (v3);
\node at (2,0) {};	
\node at (-2,0) {};				
\end{tikzpicture}  }\hspace*{2cm}
\subfigure[$i=1$ and $j=3$]{
\begin{tikzpicture}[scale=.8]
\node [nodr] at (0,0) (v) [label=above: {\scriptsize $v$}] {};
\node [nod2] at (1,-1) (v3) [label=right: {\scriptsize $v_2$}] {}
	edge  (v);		
\node [nod2] at (-1,-1) (v4) [label=left: {\scriptsize $v_3$}] {}
	edge [] (v);
\node [nod2] at (-1,1) (v6) [label=left: {\scriptsize $v_4$}] {}
	edge [] (v)
	edge [] (v4);	
\node [nod2] at (1,1) (v1) [label=right: {\scriptsize $v_1$}] {}
	edge [] (v)
	edge [] (v3);
\node at (2,0) {};	
\node at (-2,0) {};				
\end{tikzpicture}}
\caption{Two possible $3$-faces $f_i=v_ivv_{i+1}$ and $f_j=v_jvv_{j+1}$ incident to a $4(2)$-vertex.}
\label{7fig:4-2-config}
\end{figure}

\smallskip
$(b)$ Let $m_4(v)=1$.
We first claim that $v$ has at least two $7$-neighbours.  
Indeed, if three neighbours of $v$ were $6^-$-vertices, then $d_2(v)\le 20$ (since $m_4(v)=1$), a contradiction.
Note also that $v$ has at most one $7(7)$-neighbour, because $m_3(v)=2$.
We now distinguish cases depending on how many of the edges $v_i v_{i+1}$ and $v_j v_{j+1}$ lie in two $3$-faces.

Suppose that both $v_i v_{i+1}$ and $v_j v_{j+1}$ lie in two $3$-faces.
Then $v$ cannot have any $6^-$-neighbour (otherwise $d_2(v)\le 20$).  
Hence all its neighbours are $7$-vertices, and thus $v$ has three $7(6^-)$-neighbours.

Suppose next that exactly one of $v_i v_{i+1}$, $v_j v_{j+1}$ lies in two $3$-faces.
Then $v$ cannot have any $7(7)$- or $6(6)$-neighbour, so it has exactly two $7(6^-)$-neighbours, say $v_1$ and $v_2$.  
Consider $v_3$ and $v_4$.  
If one is a $5^-$-vertex, or if both are $6$-vertices, then $d_2(v)\le 20$, a contradiction.  
Thus one of $v_3,v_4$ is a $7(6^-)$-vertex and the other is a $6^+$-vertex.  
Hence $v$ has at least three $7(6^-)$-neighbours.

Finally suppose that neither $v_i v_{i+1}$ nor $v_j v_{j+1}$ lies in two $3$-faces.
Then $v$ has no $7(7)$-, $7(6)$-, $6(6)$-, or $6(5)$-neighbour.  
Thus $v$ has two $7(5^-)$-neighbours, say $v_1$ and $v_2$.  
For $v_3,v_4$:  
If one is a $5^-$-vertex, then the other must be a $7(5^-)$-vertex (else $d_2(v)\le 20$).  
If neither is a $5^-$-vertex, then either one is a $7(5^-)$-vertex or both are $6(4^-)$-vertices.
This completes part (b).

\smallskip
$(c)$ Let  $m_4(v)=2$.
If $v$ had two $6^-$-neighbours, or a $7(7)$-neighbour, or a $6(6)$-neighbour, then $d_2(v)\le 20$ (since $m_4(v)=2$), a contradiction.  
Thus $v$ has exactly three $7(6^-)$-neighbours.

If $v$ had three $7(6)$-neighbours, then both $v_i v_{i+1}$ and $v_j v_{j+1}$ would have to lie in two $3$-faces, forcing again $d_2(v)\le 20$, a contradiction.  
Thus $v$ has at most two $7(6)$-neighbours.

If $v$ has a $7(6)$-neighbour, then at least one of $v_i v_{i+1}$ or $v_j v_{j+1}$ lies in two $3$-faces, say $v_i v_{i+1}$.  
This forces that $v_{i+1}$ is a $7(6^-)$-vertex, and all neighbours of $v$ other than $v_i, v_{i+1}$ are  $7(5^-)$-vertices (otherwise $d_2(v)\le 20$).  
Hence $v$ has either:
 two $7(6)$-neighbours and two $7(5^-)$-neighbours, or  
 three $7(5^-)$-neighbours.
\end{proof}

\begin{lemma}\label{7lem:4-3-vertex-neighbours}
Let $v$ be a $4(3)$-vertex.
\begin{itemize}
\item[$(a)$] If $m_4(v)=0$, then $v$ has one of the following:
 four $7(5^-)$-neighbours, or
 three $7(5^-)$- and one $7(6)$-neighbours, or
 three $7(5^-)$- and one $6(4^-)$-neighbours, or
  two $7(5^-)$- and two $7(6)$-neighbours.

\item[$(b)$] If $m_4(v)=1$, then all neighbours of $v$ are $7(5^-)$-vertices.
\end{itemize}
\end{lemma}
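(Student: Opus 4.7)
The plan is to mimic the contradiction–reduction framework from Lemmas \ref{7lem:4-1-vertex-neighbours} and \ref{7lem:4-2-vertex-neighbours}. Label the neighbours $v_1,\ldots,v_4$ in cyclic order and the incident faces $f_1,\ldots,f_4$. Since $m_3(v)=3$, the three triangular faces must be consecutive, so we may take $f_i=vv_iv_{i+1}$ for $i\in[3]$; hence $v_1v_2,v_2v_3,v_3v_4\in E(G)$, while $f_4$ is a $5^+$-face in part (a) and a $4$-face in part (b). Whenever $d_2(v)\le 20$ we take $G'=G-v+\{v_1v_4\}$, which is proper with respect to $G$ (all pairwise distances at most $2$ among the $v_i$ are preserved either by the added edge or by the three existing edges $v_iv_{i+1}$), apply minimality to obtain a $2$-distance $21$-colouring of $G'$, and extend it to $v$. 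So the whole argument reduces to showing that any neighbourhood profile outside the listed ones yields $d_2(v)\le 20$.

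The counting backbone is
\[
d_2(v)\;\le\;\sum_{i=1}^4 d(v_i)\;-\;6\;-\;S,
\]
where the $-6$ accounts for the three edges of $G[N(v)]$ and their link contributions, and $S$ records additional overlaps: each edge $v_iv_{i+1}$ whose second bounding face is also a triangle contributes $1$ to $S$ through the third vertex of that outer triangle, which is a common neighbour of $v_i$ and $v_{i+1}$ lying outside $N(v)$. In part (b), the non-$v$ vertex of the $4$-face $f_4$ is automatically a common neighbour of $v_1$ and $v_4$, giving $S\ge 1$ unconditionally.

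For part (a) I would eliminate bad neighbour types in four stages. First, a $5^-$-neighbour, or two $6$-neighbours, pushes $\sum d(v_i)\le 26$ and hence $d_2(v)\le 20$; so $v$ has at most one $6$-neighbour and no $5^-$-neighbour. Second, planarity together with $m_3(v)=3$ prevents $v_1$ or $v_4$ from being a $7(7)$- or $6(6)$-vertex, since the resulting link condition would create an extra triangular face $vv_1y$ or $vv_4y$, violating $m_3(v)=3$. Third, $v_2$ or $v_3$ being $7(7)$ forces both incident link-edges $v_iv_{i\pm 1}$ into a second triangle, giving $S\ge 2$; a $6(5)$- or $6(6)$-neighbour at any position combines its degree deficit with a forced double-triangle edge to produce $d_2(v)\le 20$. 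Thus any $6$-neighbour is a $6(4^-)$-vertex and every $7$-neighbour is a $7(6^-)$-vertex. Fourth, I would count $7(6)$-neighbours: each one has a unique non-triangular incident face, which must be $f_4$ when the vertex sits at $v_1$ or $v_4$, and hence forces the edge $v_1v_2$ or $v_3v_4$ to be doubly triangular; at the middle positions $v_2,v_3$ the non-triangular face is one of the outer ones and at least one of the two link-edges is forced to be doubly triangular. A position-by-position check then shows that three or more $7(6)$-neighbours, or two $7(6)$-neighbours forcing distinct edges, yield $S\ge 2$; the surviving arrangements are precisely the four listed profiles.

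Part (b) is shorter: the unconditional $+1$ in $S$ tightens the bound to $d_2(v)\le\sum d(v_i)-7-S'$, so any degree-$6$ neighbour gives $d_2(v)\le 20$, and any $7(6)$- or $7(7)$-neighbour gives $S'\ge 1$ and again $d_2(v)\le 20$. Hence every neighbour of $v$ must be a $7(5^-)$-vertex.

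The main obstacle will be the position-sensitive bookkeeping of $7(6)$-neighbours in part (a): one has to verify that two $7(6)$-neighbours at the middle positions $v_2,v_3$ can share a single common neighbour---the third vertex of the triangle on the outer side of $v_2v_3$---keeping $S=1$, while any other pairing of $7(6)$-neighbours, or any coexistence of a $7(6)$-neighbour with a $6(4^-)$-neighbour, forces either two distinct contributions to $S$ or a degree deficit and triggers reduction. Matching the non-reducible cases to exactly the four listed multisets $\{7(5^-)^4\}$, $\{7(5^-)^3,7(6)\}$, $\{7(5^-)^3,6(4^-)\}$, $\{7(5^-)^2,7(6)^2\}$ is the delicate combinatorial step.
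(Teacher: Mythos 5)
Your plan is sound and follows essentially the same route as the paper: reduce to showing that any forbidden neighbour profile forces $d_2(v)\le 20$, take $G'=G-v+\{v_1v_4\}$, and apply minimality. Your overlap count $S$ makes explicit the paper's central structural claim that at most one edge of $G[N(v)]$ can lie on two $3$-faces, and the restrictions you derive (no $5^-$-, $6(5^+)$-, $7(7)$-neighbours; at most one $6(4^-)$; at most two $7(6)$; a $6(4^-)$ excludes any $7(6)$) coincide with those in the paper's proof.
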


\begin{proof}
Let $f_i=v_ivv_{i+1}$ for $i\in [3]$.
If $d_2(v) \le 20$, then $G'=G-v+\{v_1v_4\}$ yields a graph that is proper with respect to $G$, contradicting Remark~\ref{rem:proper}.
Therefore, throughout the proof we may assume  
$d_2(v) \ge 21$. Note that at most one of the edges $v_1v_2$, $v_2v_3$, $v_3v_4$ is contained in two $3$-faces, and so
$v$ has no $6(6)$- or $7(7)$-neighbour. 
Observe also that
$v$ has no $5^-$- or $6(5)$-neighbour;
$v$ has at most one $6$-neighbour; and  
 $v$ has at most two $7(6)$-neighbours.

\smallskip
$ (a) $ Let $m_4(v)=0$.  
As noted above, $v$ has at most one $6(4^-)$-neighbour.  
If $v$ has such a $6(4^-)$-neighbour, then it cannot have a $7(6)$-neighbour, since otherwise $d_2(v)\le 20$, a contradiction.  
Thus all other neighbours of $v$ are $7(5^-)$-vertices.
If $v$ has no $6(4^-)$-neighbour, then   the only possible neighbourhood types are exactly those listed in the statement, since $v$ has at most two $7(6)$-neighbours.

\smallskip
$ (b) $
Let $m_4(v)=1$.  
If $v$ has a $7(6^+)$- or a $6$-neighbour, then $d_2(v)\le 20$ by Lemma~\ref{lem:d2v}, a contradiction.  
Hence every neighbour of $v$ must be a $7(5^-)$-vertex.
\end{proof}

\begin{lemma}\label{7lem:5-4-vertex}
Let $v$ be a $5(4)$-vertex. 
\begin{itemize}
\item[$(a)$] $v$ cannot have two $5(5)$-neighbours.
\item[$(b)$] If $v$ has a $5(5)$-neighbour, then $v$ has no $4$-neighbour.
\item[$(c)$] If $v$ has a $7(6)$-neighbour, then $v$ is adjacent to at most one $4$-vertex.
\item[$(d)$] If $m_4(v)=0$, then $v$ has at least two neighbours that are neither $5^-$- nor $6(6)$-vertices.
\item[$(e)$] If $m_4(v)=1$, then $v$ has at least three $6^+$-neighbours different from $6(6)$-vertex.
\item[$(f)$] If $m_4(v)=1$ and $v$ has a $7(7)$-neighbour, then $v$ has at least four $6^+$-neighbours different from $6(6)$-vertex.
\end{itemize}
\end{lemma}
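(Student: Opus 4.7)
The plan is to prove all six items by a uniform template. Let $v$ be a $5(4)$-vertex with clockwise neighbours $v_1,\ldots,v_5$ and incident faces $f_1,\ldots,f_5$ where $f_i=v_ivv_{i+1}$; without loss of generality $f_5$ is the unique $4^+$-face and $f_1,\ldots,f_4$ are $3$-faces. For each item the template is: suppose the stated conclusion fails, derive $d_2(v)\leq 20$, set $G':=G-v+\{v_1v_5\}$ (which is proper with respect to $G$, since every pair of neighbours of $v$ other than $\{v_1,v_5\}$ is already at distance at most $2$ in $G-v$ through the consecutive triangle edges $v_1v_2,v_2v_3,v_3v_4,v_4v_5$, and the one added edge takes care of $\{v_1,v_5\}$), apply minimality to obtain a $2$-distance $21$-colouring of $G'$, and extend it to $v$ using one of the at least $21-20\geq 1$ colours avoided on $N_2(v)$.

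The main counting tool is the baseline bound
\[
d_2(v)\;\leq\;\sum_{i=1}^{5}d(v_i)\;-\;8\;-\;s,
\]
where $-8$ collects the forced triangle adjacencies $|N(v_i)\cap N(v)|\geq 2$ for $i\in\{2,3,4\}$ and $\geq 1$ for $i\in\{1,5\}$, and $s$ counts distance-$2$ vertices shared between two $v_i$'s outside $N(v)$. Each edge $v_iv_{i+1}$ contained in two $3$-faces contributes one unit to $s$ through the apex of the outer triangle, and if $m_4(v)=1$, the fourth vertex of $f_5$ contributes one more unit as a common neighbour of $v_1$ and $v_5$ lying outside $N(v)$. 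The key structural input is that a $5(5)$-, $6(6)$- or $7(7)$-neighbour $v_i$ of $v$ must lie in $\{v_2,v_3,v_4\}$, since every face at such a $v_i$ is a $3$-face while $v_1,v_5$ are both incident to $f_5$; and for any such $v_i$, both edges $v_{i-1}v_i$ and $v_iv_{i+1}$ are forced into two $3$-faces, each contributing to $s$.

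Plugging in, each part reduces to a short check. For (a), two $5(5)$-neighbours give $\sum d(v_i)\leq 31$ and $s\geq 3$. For (b), a $5(5)$-neighbour plus a $4$-neighbour give $\sum d(v_i)\leq 30$ and $s\geq 2$. For (c), a $7(6)$-neighbour $u=v_i$ plus two $4$-neighbours give $\sum d(v_i)\leq 29$, and $s\geq 1$ because of the seven cyclic faces around $u$ exactly one is non-triangular, so at least one of the edges $v_{i-1}v_i$, $v_iv_{i+1}$ lies between two triangles. For (d), (e), (f), I would enumerate the multiset of $5^-$- and $6(6)$-neighbours permitted by the negated claim, using that each $6(6)$-neighbour in $\{v_2,v_3,v_4\}$ forces its two incident edges in $G[N(v)]$ into two $3$-faces, that the $7(7)$-neighbour in (f) does the same, and that $m_4(v)=1$ gives an extra $+1$ to $s$ in (e) and (f); combined with the degree savings from low-degree neighbours, each sub-case yields $d_2(v)\leq 20$.

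The principal obstacle is the cyclic analysis in (c): if $v_i\in\{v_2,v_3,v_4\}$, one must observe that the unique non-triangular face at $u$ occupies one of the five non-$v$ cyclic slots at $u$, so at most one of the two edges $v_{i-1}v_i$, $v_iv_{i+1}$ can be blocked from lying between two triangles, leaving the other in two; and if $v_i\in\{v_1,v_5\}$, then $f_5$ itself is forced to be the unique non-triangular face at $u$, so the relevant outer edge of $v_i$ (namely $v_1v_2$ or $v_4v_5$) is automatically between two triangles. Once the structural observations are in place, everything else reduces to routine arithmetic verification.
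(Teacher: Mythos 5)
Your proposal is correct and follows essentially the same approach as the paper: reduce each part to showing $d_2(v)\le 20$, remove $v$ and add one edge to restore distance-two adjacencies among $v_1,\ldots,v_5$, then invoke minimality and extend. The paper leaves the arithmetic behind ``$d_2(v)\le 20$'' entirely implicit, whereas you make it explicit via $d_2(v)\le \sum_i d(v_i)-8-s$ and spell out the key structural observations (that $5(5)$-, $6(6)$-, $7(7)$-neighbours are forced into $\{v_2,v_3,v_4\}$ since $v_1,v_5$ lie on the $4^+$-face $f_5$, that each such neighbour puts both of its incident edges of $G[N(v)]$ between two triangles, and that $m_4(v)=1$ supplies one extra shared distance-two vertex through the fourth corner of $f_5$). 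You also correctly use $G'=G-v+\{v_1v_5\}$ where the paper writes $G'=G-v+\{v_4v_5\}$ (an apparent typo, since $v_4v_5$ is already an edge of the triangle $f_4$); your version is the intended one. One minor imprecision: you describe every ``edge in two $3$-faces'' as contributing to $s$ through an apex outside $N[v]$, but if that apex happens to be some $v_j$ the saving is absorbed instead by a larger $|N(v_i)\cap N(v)|$ term, so the $-8$ tightens rather than $s$ increasing; the net bound is unchanged, so this does not affect correctness.
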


\begin{proof}
Let $f_i=v_ivv_{i+1}$ for $i\in [4]$.  
If $d_2(v)\le 20$, then setting $G'=G-v+\{v_4v_5\}$ produces a graph $G'$ that is proper with respect to $G$, contradicting Remark~\ref{rem:proper}.  
Thus throughout the proof we assume $d_2(v)\ge 21$.

\smallskip
$ (a)$ 
If $v$ had two $5(5)$-neighbours, then there would exist at least three edges in $E(v)$ such that each of them lies on two $3$-faces, which forces $d_2(v)\le 20$ by Lemma~\ref{lem:d2v}, a contradiction.

\smallskip
$ (b)$
If $v$ had both a $5(5)$-neighbour and a $4$-neighbour, then again $d_2(v)\le 20$ by Lemma~\ref{lem:d2v}, a contradiction.

\smallskip
$ (c)$ 
If $v$ had a $7(6)$-neighbour and two $4$-neighbours, then some edge of $E(v)$ would be contained in two $3$-faces, which gives $d_2(v)\le 20$ by Lemma~\ref{lem:d2v}, a contradiction.

\smallskip
$ (d)$
Let $m_4(v)=0$.  
Suppose that $v$ has at most one neighbour that is a $6(5^-)$- or $7$-vertex.  
Then all remaining neighbours of $v$ are $6(6)$- or $5^-$-vertices.  
If $v$ has a $6(6)$-neighbour, then two edges of $E(v)$ lie in two $3$-faces.  
Considering all possible distributions of $5^-$- and $6(6)$-neighbours, one always obtains $d_2(v)\le 20$ by Lemma~\ref{lem:d2v}, a contradiction.  
Thus $v$ must have at least two neighbours that are neither $5^-$- nor $6(6)$-vertices.

\smallskip
$ (e)$
Let $m_4(v)=1$.  
Assume that $v$ has three neighbours forming $5^-$- or $6(6)$-vertices.  
Note that if $v$ has a $6(6)$-neighbour, then two edges of $E(v)$ lie on two $3$-faces.  
By checking all possible distributions of three neighbours among $5^-$-vertices and $6(6)$-vertices, we always obtain $d_2(v)\le 20$ by Lemma~\ref{lem:d2v}, a contradiction.

\smallskip
$ (f)$
Let $m_4(v)=1$ and suppose $v$ has a $7(7)$-neighbour.  
Then at least two edges of $E(v)$ are contained in two $3$-faces.  
If $v$ had two neighbours forming $5^-$- or $6(6)$-vertices, then $d_2(v)\le 20$ by Lemma~\ref{lem:d2v}, a contradiction.  
\end{proof}

\begin{lemma}\label{7lem:5-5-vertex}
Let $v$ be a $5(5)$-vertex. 
\begin{itemize}
\item[$(a)$] $n_4(v)=0$ and $n_5(v)\le 2$.
\item[$(b)$] $v$ cannot have two neighbours that are $5(4^+)$-, or $6(6)$-vertices.  
In particular, $v$ has at least one $7$-neighbour.
\item[$(c)$] If $v$ has four $6$-neighbours, then $v$ also has a $7(5^-)$-neighbour.
\item[$(d)$] If $n_5(v)=0$ and $v$ has both a $6(5^-)$- and a $6(6)$-neighbour, then $v$ has three $7$-neighbours, at least one of which is $7(5^-)$-vertex.
\item[$(e)$] If $n_5(v)=1$, then $v$ has no $6(6)$-neighbours; in particular, $v$ has at most two $6$-neighbours.
\item[$(f)$] If $n_5(v)=1$ and $v$ has exactly one $6(5^-)$-neighbour, then $v$ has a $7(5^-)$-neighbour.
\item[$(g)$] If $n_5(v)=1$ and $v$ has exactly two $6(5^-)$-neighbours, then $v$ has two $7(5^-)$-neighbours.
\item[$(h)$] If $n_5(v)=2$, then $v$ has three $7(5^-)$-neighbours.
\end{itemize}
\end{lemma}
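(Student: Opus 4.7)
Throughout, let $v$ be a $5(5)$-vertex with neighbours $v_1,\ldots,v_5$ in cyclic order and incident $3$-faces $f_i=v_ivv_{i+1}$ (indices modulo $5$). Since each consecutive pair $v_i,v_{i+1}$ is already joined by an edge and non-consecutive pairs $v_i,v_{i+2}$ remain at distance two through $v_{i+1}$, the graph $G'=G-v$ is proper with respect to $G$. By minimality $G'$ admits a $2$-distance $21$-coloring, which extends to $v$ whenever $d_2(v)\leq 20$. Thus in every part of the lemma it will suffice to show that the negation of the conclusion forces $d_2(v)\leq 20$.

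The engine of the proof is the estimate
\[
d_2(v)\;\leq\; 5+\sum_{i=1}^{5}\bigl(d(v_i)-3\bigr)\;-\;\alpha(v),
\]
where $\alpha(v)$ counts the edges $v_iv_{i+1}$ that lie in two $3$-faces. Each such edge produces a common neighbour of $v_i$ and $v_{i+1}$ in $N_2(v)\setminus N(v)$ that is otherwise double-counted in the union bound. The central structural observations are: a $k(k)$-neighbour $v_i$ (i.e.\ a $5(5)$- or $6(6)$-vertex) forces \emph{both} $v_{i-1}v_i$ and $v_iv_{i+1}$ to lie in two $3$-faces; a $5(4)$- or $7(6)$-neighbour forces at least one of them; a $7(7)$-neighbour forces both. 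These observations, combined with the earlier lemmas of Section~\ref{sub:7}, will feed into $\alpha(v)$ in each case.

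Parts (a)--(c) follow quickly. For $n_4(v)=0$, apply Lemma~\ref{7lem:4-vertex-neighbours}(a); for $n_5(v)\leq 2$, three $5$-neighbours already give $d_2(v)\leq 5+3\cdot 2+2\cdot 4=19$. For part (b), any two neighbours drawn from $\{5(4),5(5),6(6)\}$ contribute at least two edges in two $3$-faces (three or four when $5(5)$- or $6(6)$-neighbours are present, with one cancellation if the two heavy neighbours are adjacent in the cyclic order); substituting the degree bounds yields $d_2(v)\leq 20$ in every combination. The ``in particular'' clause follows because otherwise all five neighbours have degree at most $6$, giving $d_2(v)\leq 5+5\cdot 3=20$. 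For part (c), a fifth neighbour of type $5$ gives $d_2(v)\leq 5+2+4\cdot 3=19$; of type $7(6)$ forces one overlap, giving $d_2(v)\leq 5+4+12-1=20$; of type $7(7)$ forces two overlaps, giving $d_2(v)\leq 19$.

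The bulk of the work is parts (d)--(h), where the recipe is repeated on the finer stratification indexed by $n_5(v)$ and the number of $6(6)$- versus $6(5^-)$-neighbours. In each case, assume the stated conclusion fails (so the number of $7(5^-)$-neighbours is strictly smaller than claimed), list the admissible types of the remaining neighbours, compute the degree sum, bound $\alpha(v)$ using the structural observations above together with Lemmas~\ref{7lem:4-vertex-neighbours} and \ref{7lem:5-4-vertex} and the earlier parts (a)--(c), and verify $d_2(v)\leq 20$. The principal obstacle is the bookkeeping when two heavy neighbours are \emph{adjacent} among the $v_i$: then the overlapping edges each of them forces may coincide, shrinking $\alpha(v)$. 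To handle this, in each of (d)--(h) one separates the adjacent and non-adjacent subcases and uses parts (a)--(c) and Lemma~\ref{7lem:5-4-vertex}(a),(b) to forbid the worst clusterings of bad $5$-vertices. The individual computations are routine; the care lies entirely in exhausting the subcases.
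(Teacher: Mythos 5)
Your approach is identical to the paper's: in each part, assume the negation, show $d_2(v)\leq 20$, and extend a $2$-distance $21$-coloring of $G-v$ (which is proper with respect to $G$) to $v$. The $\alpha(v)$ bookkeeping you introduce is a cleaner way to organize what the paper argues ad hoc, the structural observations about which neighbour types force edges of $G[N(v)]$ into two $3$-faces are all correct, and your verifications of (a)--(c) check out.

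The difficulty is that you explicitly leave (d)--(h) as ``routine bookkeeping,'' and these parts are where the lemma's content lives. The very coincidence problem you flag at the end---forced edges from cyclically adjacent heavy neighbours may overlap---is precisely what makes (d) and (f) nontrivial. For (d), after assigning two forced edges to the $6(6)$-neighbour, one must still argue that three cyclically placed $7(6^+)$-neighbours contribute a third distinct forced edge, giving $\alpha(v)\geq 3$; for (f), one observes that among three $7(6^+)$-vertices on a $5$-cycle some pair is non-adjacent, so their forced edges are incident to disjoint sets of vertices and hence distinct, giving $\alpha(v)\geq 2$. These checks are short but genuinely case-dependent, and without them the proposal is a plan rather than a proof. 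Incidentally, your assertion in (b) that two $5(4)$-neighbours force ``at least two edges'' fails when they are cyclically adjacent and both force the edge between them, so that $\alpha(v)$ may be only $1$; the bound $d_2(v)\leq 20$ there survives only because the degree sum is already lower. The slip is harmless in (b), but it is exactly the kind of error that becomes fatal in (d)--(h) if the coincidence analysis is not carried out explicitly.
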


\begin{proof}
Let $f_i = v_i v v_{i+1}$ for $i\in[5]$ in cyclic order.
If $d_2(v)\le 20$, then taking $G' = G-v$ gives a graph that is proper with respect to $G$, contradicting Remark~\ref{rem:proper}.  
We therefore assume $d_2(v)\ge 21$.

\smallskip
$ (a) $
By Lemma~\ref{7lem:4-vertex-neighbours}(a), $v$ has no $4$-neighbour.  
If $v$ had three $5$-neighbours, then $d_2(v)\le 19$ by Lemma~\ref{lem:d2v}, a contradiction.  

\smallskip
$ (b) $
If $v$ had two neighbours consisting of $5(4^+)$-, or $6(6)$-vertex, then $d_2(v)\le 20$ by Lemma~\ref{lem:d2v}, a contradiction.  
Similarly, if all neighbours of $v$ were $6^-$-vertices, then again $d_2(v)\le 20$, a contradiction.  
Hence $v$ must have at least one $7$-neighbour.

\smallskip
$ (c) $
Suppose that $v$ has four $6$-neighbours.  
By (b), the fifth neighbour must be a $7$-vertex, say $v_1$.  
If $v_1$ were a $7(6^+)$-vertex, then some edge in $E(v)$ would be contained in two $3$-faces, implying $d_2(v)\le 20$ by Lemma~\ref{lem:d2v}, a contradiction.  
Hence $v_1$ is a $7(5^-)$-vertex.

\smallskip
$ (d) $ 
Let $n_5(v)=0$, and suppose that $v$ has both a $6(5^-)$- and a $6(6)$-neighbour.  
If $v$ had only two $7$-neighbours, then $d_2(v)\le 20$  by Lemma~\ref{lem:d2v}, a contradiction.  
Thus $v$ has three $7$-neighbours.  
At least one of them must be $7(5^-)$-vertex; otherwise there would exist three edges of $E(v)$ such that each of them lies in two $3$-faces, again forcing $d_2(v)\le 20$ by Lemma~\ref{lem:d2v}.

\smallskip
$ (e) $
Let $n_5(v)=1$.  
If $v$ had a $6(6)$-neighbour or three $6$-neighbours, then $d_2(v)\le 20$ by Lemma~\ref{lem:d2v}, a contradiction.

\smallskip
$ (f) $
Let $n_5(v)=1$, and suppose that $v$ has exactly one $6(5^-)$-neighbour.  
By (e), $v$ has no $6(6)$-neighbour, so the other three neighbours are $7$-vertices.  
If all were $7(6^+)$-vertices, then at least two edges of $E(v)$ would lie in two $3$-faces, giving $d_2(v)\le 20$ by Lemma~\ref{lem:d2v}, a contradiction.  
Thus $v$ has a $7(5^-)$-neighbour.

\smallskip
$ (g) $ 
Let $n_5(v)=1$, and suppose that $v$ has exactly two $6(5^-)$-neighbours.  
By (a) and (e), $v$ has two $7$-neighbours.  
If one were a $7(6^+)$-vertex, some edge of $E(v)$ would lie in two $3$-faces, giving $d_2(v)\le 20$ by Lemma~\ref{lem:d2v}, a contradiction.  
Thus both $7$-neighbours are $7(5^-)$-vertices.

\smallskip
$ (h) $
Let $n_5(v)=2$.  
If $v$ had a $6$-neighbour or a $7(6^+)$-neighbour, then $d_2(v)\le 20$ by Lemma~\ref{lem:d2v}, a contradiction.  
\end{proof}

\begin{lemma}\label{7lem:6-5-vertex}
Let $v$ be a $6(5)$-vertex.  
Then $v$ has at most four bad $5$-neighbours.  
In particular, if $m_4(v)=1$ and $v$ has four bad $5$-neighbours, then $v$ has two $7(5^-)$-neighbours.
\end{lemma}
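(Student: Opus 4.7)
Following the template of the earlier lemmas in Section \ref{sub:7}, I label the neighbours of $v$ as $v_1,\ldots,v_6$ in clockwise order with incident faces $f_1,\ldots,f_6$, where $f_i=v_i v v_{i+1}$ is a $3$-face for $i\in[5]$ and $f_6$ is a $4^+$-face. The plan is, in any configuration that would violate the statement, to establish $d_2(v)\leq 20$, then set $G'=G-v+\{v_1v_6\}$ (the chord drawn through $f_6$ to preserve planarity), verify $G'$ is proper with respect to $G$, apply minimality to obtain a $2$-distance $21$-coloring of $G'$, and extend it to $v$. My starting bound is $d_2(v)\leq \sum_{i=1}^6 d(v_i)-2|E(G[N(v)])|$, reduced by one per ``overcount'' (a distance-$2$ vertex of $v$ counted in more than one $N(v_i)$).

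For the first assertion, I will first observe that each bad $5$-neighbour $v_k$ of $v$ forces at least one of the edges in $\{v_{k-1}v_k,v_kv_{k+1}\}\cap \{v_1v_2,\ldots,v_5v_6\}$ to be contained in two $3$-faces. For $k\in\{2,3,4,5\}$ this follows by inspecting the five faces around $v_k$: two are the $v$-side triangles $f_{k-1},f_k$, so at least two of the three remaining faces are triangles, and at least one of those two must be adjacent to $f_{k-1}$ or $f_k$. For $k\in\{1,6\}$, since $m_3(v_k)\geq 4$ and $f_6$ is non-triangular at $v_k$, $f_6$ must be the unique non-triangle at $v_k$, so the face sharing the edge $v_1v_2$ (resp.\ $v_5v_6$) with $f_1$ (resp.\ $f_5$) is a triangle. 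A standard covering argument (five demanders each needing an adjacent edge, each edge serving at most two) shows at least three of the edges $v_1v_2,\ldots,v_5v_6$ are each contained in two $3$-faces, yielding three overcounts. Combined with $|E(G[N(v)])|\geq 5$ and $\sum_i d(v_i)\leq 5\cdot 5+7=32$, this gives $d_2(v)\leq 32-10-3=19$, the desired contradiction.

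For the second assertion, I assume $m_4(v)=1$, exactly four bad $5$-neighbours, and for contradiction that $v$ has at most one $7(5^-)$-neighbour. The $4$-face $f_6$ contributes one overcount: a vertex $x$ on its boundary lies in $N(v_1)\cap N(v_6)$, distinct from any edge-overcount. Let $m$ denote the minimum number of edges in $\{v_1v_2,\ldots,v_5v_6\}$ required to cover the four bad $5$-neighbours; enumerating the $\binom{6}{4}=15$ positions shows $m\in\{2,3\}$. If $m=3$, three edge-overcounts plus the $f_6$-overcount give $d_2(v)\leq 34-10-4=20$. If $m=2$, then either one of the two non-bad-$5$ neighbours has degree $\leq 6$ (so $\sum_i d(v_i)\leq 33$ and $d_2(v)\leq 33-10-3=20$), or both are degree $7$ and by the hypothesis at least one is a $7(6^+)$-vertex; such a vertex must be $7(6)$ (since $f_6$ is already a non-triangle incident to it if it is $v_1$ or $v_6$, and in any case it has at most one non-triangular face), and its structure forces an additional edge among $\{v_1v_2,\ldots,v_5v_6\}$ into two $3$-faces beyond the two of the min-covering, giving three edge-overcounts and again $d_2(v)\leq 20$.

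The main obstacle is the subcase $m=2$ with both non-bad-$5$ neighbours of degree $7$: the bound is tight and the argument relies jointly on the $f_6$-overcount and the structural forcing at a $7(6^+)$-neighbour. Confirming that the latter produces a genuinely new overcount edge (not already among the two in the min-covering) requires a short enumeration over the six $m=2$ configurations, but in each case the geometry of the $7(6)$-neighbour's unique non-triangle face places the newly forced edge at the ``wrong'' position to coincide with the min-covering. A secondary subtlety---that the $f_6$-overcount and the various edge-overcounts correspond to pairwise distinct distance-$2$ vertices---follows since the $f_6$-duplicate $x$ is a common neighbour of the non-adjacent pair $v_1,v_6$ in $G[N(v)]$, whereas every edge-overcount is a common neighbour of a cyclically adjacent pair $v_i,v_{i+1}$.
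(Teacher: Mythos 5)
Your bound of $d_2(v)\leq 19$ (resp.\ $\leq 20$) agrees with the paper's, and your covering argument deriving the three forced doubled edges is a valid and more explicit version of what the paper only asserts. However, the reduction step is flawed: $G' = G-v+\{v_1v_6\}$ is not proper with respect to $G$. After deleting $v$, the neighbours $v_1,\ldots,v_6$ form only a $6$-cycle, so $v_1$ and $v_4$ (likewise $v_2,v_5$ and $v_3,v_6$) are at distance $3$ in $G'$ while they are at distance $2$ in $G$ via $v$. A $2$-distance coloring of $G'$ may therefore give $v_1$ and $v_4$ the same colour, and that coloring does not restrict to a partial $2$-distance coloring of $G$, so the extension step breaks. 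The paper instead sets $G'=G-v+\{v_1v_6,\,v_1v_3,\,v_3v_5\}$, which makes every pair among $v_1,\ldots,v_6$ lie at distance at most $2$; and since $v_1$ and $v_3$ thereby gain a degree, it first argues from the abundance of bad $5$-neighbours that the labels can be chosen so these two chord endpoints are $5$-vertices, keeping $\Delta(G')\leq 7$. This extra chording, together with the degree bookkeeping it forces, is exactly what your proof omits.

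A secondary inaccuracy: in the $m=2$ subcase you assert the $7(6^+)$-neighbour ``must be $7(6)$.'' This is forced only when it sits at position $1$ or $6$ (where it is incident to $f_6$). When the two non-bad-$5$ positions are $\{3,4\}$ (bad $5$-neighbours at $\{1,2,5,6\}$), that neighbour may be a $7(7)$-vertex. This happens to be harmless, since a $7(7)$-vertex at an interior position doubles both flanking edges $v_{i-1}v_i$ and $v_iv_{i+1}$, which only strengthens the bound, but the claim as written is incorrect and should be repaired by arguing separately for the $7(7)$ case.
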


\begin{proof}
Let $f_i=v_ivv_{i+1}$ for $i\in[5]$.
First, we claim that $v$ cannot have five bad $5$-neighbours. Indeed, if all five neighbours were bad $5$-vertices, then at least three edges in $E(v)$ would each lie in two $3$-faces, forcing $d_2(v)\le 19$ by Lemma~\ref{lem:d2v}.  
Since $v_1$ or $v_6$ must be a $5$-vertex (say $v_1$) and similarly one of $v_3,v_5$ must be a $5$-vertex (say $v_3$), we may take
$
G' = G-v+\{v_1v_6,\, v_3v_1,\, v_3v_5\},
$
and $G'$ is proper with respect to $G$, contradicting Remark~\ref{rem:proper}.  
Thus $v$ has at most four bad $5$-neighbours.

Now assume $m_4(v)=1$ and $v$ has four bad $5$-neighbours.
Then at least two edges in $E(v)$ lie in two $3$-faces.  
If $v$ had at most one $7(5^-)$-neighbour, then $d_2(v)\le 20$ by Lemma~\ref{lem:d2v}, a contradiction.  
\end{proof}

Let $v$ be a $7$-vertex, and let $x$ be a $4$-neighbour of $v$.  
If $v$ and $x$ have two common neighbours (equivalently, the edge $vx$ is contained in two $3$-faces), then  
$x$ is called a \emph{support neighbour} of $v$.

A vertex is said to be \emph{poor} if it is a $4$-vertex or a $5(5)$-vertex.  
Since $G$ has no $4(4)$-vertices, the poor vertices are exactly  
$4(0)$-, $4(1)$-, $4(2)$-, $4(3)$-, and $5(5)$-vertices.

\begin{lemma}\label{7lem:7-vertex}
Let $v$ be a $7$-vertex. Then:
\begin{itemize}
\item[$(a)$] If $v$ has a support neighbour, then $d_2(v)\ge 21$.
\item[$(b)$] If $4\le m_3(v)\le 5$, then $v$ has at most six $4$-neighbours.
\item[$(c)$] If $m_3(v)=5$ and $n_4(v)\le 2$, then $v$ has at most four poor neighbours.
\item[$(d)$] If $v$ is a $7(6)$-vertex, then $v$ has at most five poor neighbours.
\item[$(e)$] If $n_4(v)=3$, then $v$ has at most two $5(5)$-neighbours.
\item[$(f)$] If $n_4(v)=4$, then $v$ has at most one $5(5)$-neighbour.
\item[$(g)$] If $m_3(v)=5$, $n_4(v)=4$, $v$ has a support neighbour, and $v$ has both a $5(4)$- and a $5(5)$-neighbour, then $v$ must be adjacent to a $7$-vertex.
\item[$(h)$] If $m_3(v)=5$, $n_4(v)\geq 3$, $v$ has a support neighbour, and $v$ has  a $3$-neighbour, then   $v$ has at most four poor neighbours.
\item[$(i)$] If $m_3(v)=5$, $n_4(v)=5$, then   $v$ has at most one $5(4)$-neighbour. In particular, if $v$ has a $5(4)$-neighbour or two $6(5)$-neighbours, then $v$ is incident to two $5^+$-faces.

\item[$(j)$] If $m_3(v)=5$, $n_4(v)=6$, then $v$ is incident to two $5^+$-faces; in particular,  $v$ has no bad $5$-neighbour.
\end{itemize}
\end{lemma}

\begin{proof}
$(a)$
Let $v_1$ be a support neighbour of $v$.  
Then $v_1$ is a $4$-vertex adjacent to both $v_2$ and $v_7$.  
Assume for a contradiction that $d_2(v)\le 20$.  
Take
$
G' = G - v + \{v_1v_3,\, v_1v_4,\, v_1v_5,\, v_1v_6\}.
$
Then $G'$ is proper with respect to $G$, contradicting Remark~\ref{rem:proper}.  
Thus $d_2(v)\ge 21$.

\smallskip
$(b)$ 
Let $4\le m_3(v)\le 5$.  
If all neighbours of $v$ were $4$-vertices, then $d_2(v)\ge 20$; and since $m_3(v)\ge 4$, at least one neighbour must be a support neighbour.  
By part (a), this yields a contradiction.  
Hence $v$ has at most six $4$-neighbours.

\smallskip
$(c)$  
Assume $m_3(v)=5$ and $n_4(v)\le 2$.  
Suppose for a contradiction that $v$ has five poor neighbours.  
By Lemma~\ref{7lem:5-5-vertex}(b), a $5(5)$-vertex is adjacent to at most one $5(5)$-vertex.  
Since $m_3(v)=5$, $v$ can have at most three $5(5)$-neighbours.  
If $v$ has five poor neighbours, then it must have exactly three $5(5)$-neighbours and two $4$-neighbours, contradicting Lemma~\ref{7lem:5-5-vertex}(a), which states that $5(5)$-vertices have no $4$-neighbours.

\smallskip
$(d)$   
Let $v$ be a $7(6)$-vertex.  
By Lemma~\ref{7lem:5-5-vertex}(a),(b), a $5(5)$-vertex has no $4$-neighbour and is adjacent to at most one bad $5$-vertex.  
Also, by Lemma~\ref{7lem:4-vertex-neighbours}(b), a $4(1^+)$-vertex is adjacent to at most one $4$-vertex.  
These restrictions prevent $v$ from having six poor neighbours.

\smallskip
$(e)$  
Let  $n_4(v)=3$.  
Since a $5(5)$-vertex has no $4$-neighbour and is adjacent to at most one bad $5$-vertex (Lemma~\ref{7lem:5-5-vertex}(a),(b)),  
$v$ can have at most two $5(5)$-neighbours.

\smallskip
$(f)$  
Let $n_4(v)=3$. Then similar reasoning as in (d) implies that $v$ can have at most one $5(5)$-neighbour.

\smallskip
$(g)$   
Assume $m_3(v)=5$, $n_4(v)=4$, $v$ has a support neighbour, and $v$ has both a $5(4)$- and a $5(5)$-neighbour.  
If the seventh neighbour of $v$ were a $6^-$-vertex, then $d_2(v)\le 20$ by Lemma~\ref{lem:d2v}, contradicting part (a).  
Hence the remaining neighbour must be a $7$-vertex.

\smallskip
$(h)$
Assume the stated conditions.
If $v$ had five poor neighbours, then $d_2(v)\le 20$ by Lemma~\ref{lem:d2v},
contradicting~(a).

\smallskip
$(i)$
Assume $m_3(v)=5$ and $n_4(v)=5$.
Then $v$ has a support neighbour.
If $v$ had two $5(4)$-neighbours, then $d_2(v)\le 20$ by Lemma~\ref{lem:d2v},
contradicting~(a). Thus $v$ has at most one such neighbour.
Similarly, if $v$ has one $5(4)$-neighbour or two $6(5)$-neighbours,
then $v$ must be incident to two $5^+$-faces.

\smallskip
$(j)$
Assume $m_3(v)=5$ and $n_4(v)=6$.
Then $v$ has a support neighbour.
If $v$ were not incident to two $5^+$-faces, then $d_2(v)\le 20$ by Lemma~\ref{lem:d2v},
contradicting~(a). Thus $v$ is incident to two $5^+$-faces.  Similarly we deduce that  $v$ has no $5(4)$- and $5(5)$-neighbours..
\end{proof}

We now apply discharging to show that $G$ does not exist. 
We use the same initial charges as in the case $\Delta=6$, and redistribute charge according to the following rules. \medskip

\noindent  
\textbf{{Discharging Rules}} \medskip

We apply the following discharging rules.

\begin{itemize}
\setlength\itemsep{.2em}
\item[\textbf{R1:}] Every $3$-face  receives $\frac{1}{3}$  from each of its incident vertices.
\item[\textbf{R2:}] Every $5^+$-face gives $\frac{1}{5}$  to each of its incident vertices.
\item[\textbf{R3:}] Every $6(3^-)$-vertex gives $\frac{1}{6}$  to each of its neighbours.
\item[\textbf{R4:}] Every $6(4)$-vertex gives $\frac{1}{9}$  to each of its neighbours.
\item[\textbf{R5:}] Every $6(5)$-vertex gives $\frac{1}{9}$  to each of its  bad $5$-neighbours.
\item[\textbf{R6:}] Every $7(3^-)$-vertex gives $\frac{2}{7}$  to each of its neighbours.
\item[\textbf{R7:}] Let $v$ be a $7(4)$- or $7(5)$-vertex. Then $v$ gives
\begin{itemize}
\item[$(a)$]  $\frac{1}{5}$  to each of its $3$-neighbours,
\item[$(b)$]  $\frac{1}{4}$  to each of its bad $4$-neighbours,
\item[$(c)$]  $\frac{2}{9}$  to each of its $5(5)$-neighbours,
\item[$(d)$]  $\frac{1}{9}$  to each of its $5(4)$-neighbours,
\item[$(e)$]  $\frac{1}{18}$  to each of its $6(5)$-neighbours.
\end{itemize}

\item[\textbf{R8:}] Let $v$ be a $7(6)$-vertex. Then $v$ gives
\begin{itemize}
\item[$(a)$]  $\frac{1}{6}$  to each of its $5(5)$-neighbours,
\item[$(b)$]  $\frac{1}{9}$  to each of its $5(4)$-neighbours,
\item[$(c)$] $\frac{1}{6}$  to each of its  bad $4$-neighbours.
\end{itemize}

\item[\textbf{R9:}] Let $v$ be a $7(7)$-vertex. Then $v$ gives
\begin{itemize}
\item[$(a)$]  $\frac{1}{6}$  to each of its $5(5)$-neighbours,
\item[$(b)$]  $\frac{1}{12}$  to each of its $5(4)$-neighbours.
\end{itemize}

\end{itemize}

\vspace*{1em}

\noindent
\textbf{Checking} $\mu^*(v), \mu^*(f)\geq 0$ for $v\in V(G), f\in F(G)$.\medskip

First we show that $\mu^*(f)\geq 0$ for each $f\in F(G)$.  
Recall that every face $f$ has initial charge $\mu(f)=\ell(f)-4$.  
If $f$ is a $3$-face, then by R1 it receives $\frac{1}{3}$ from each of its incident vertices, and hence  
$\mu^*(f)\geq -1 + 3\times \frac{1}{3} = 0$.
If $f$ is a $4$-face, then $\mu(f)=0$ and it neither sends nor receives charge, so $\mu^*(f)=\mu(f)=0$.
Let $f$ be a $5^+$-face. By R2, the face $f$ sends $\frac{1}{5}$ to each of its incident vertices. Therefore,  
$\mu^*(f)= \ell(f)-4 - \ell(f)\times \frac{1}{5} = \frac{4\ell(f)}{5} - 4 \geq 0$.
Consequently, every face $f\in F(G)$ satisfies $\mu^*(f)\geq 0$.  
\medskip

Next, let $v\in V(G)$ be a vertex of degree $d(v)=k$.  
By Lemma~\ref{7lem:min-deg-3}, we have $k\geq 3$.  
\medskip

\textbf{(1).} Let $k=3$.  The initial charge of $v$ is $\mu(v)=d(v)-4=-1$. By Lemma \ref{7lem:3-vertex}, $m_3(v)=0$, $m_4(v)\leq 1$ and each neighbour of $v$ is a $7(5^-)$-vertex. This implies that $v$ is incident to at least two $5^+$-faces. Then $v$ receives  $\frac{1}{5}$ from each of its incident $5^+$-faces by R2, and at least $\frac{1}{5}$ from each of its $7(5^-)$-neighbours by R6, R7(a). Hence, $\mu^*(v)\geq -1+2\times \frac{1}{5}+3\times\frac{1}{5}=0$. \medskip

\textbf{(2).} Let $k=4$.  The initial charge of $v$ is $\mu(v)=d(v)-4=0$. We have $m_3(v)\leq 3$ by Lemma \ref{7lem:4-vertex-m3v}.  If $m_3(v)=0$, then $\mu^*(v)\geq 0$ since $v$ does not give a charge to its any incident faces. So we may assume that  $1 \leq m_3(v)\leq 3$. \medskip

\textbf{(2.1).}  Let $m_3(v)= 1$.  
If $m_4(v)\leq 1$, then $v$ is incident to two $5^+$-faces, and by R2, $v$ receives $\frac{1}{5}$ from each of those $5^+$-faces. Thus,  $\mu^*(v)\geq 2\times \frac{1}{5}-\frac{1}{3}>0$ after  $v$ sends $\frac{1}{3}$ to its incident $3$-face by R1. 
If $m_4(v)=2$, then $v$ has either a $7(5^-)$-neighbour or two $7(6)$-neighbours by Lemma \ref{7lem:4-1-vertex-neighbours}(a). In both cases, $v$ receives at least $\frac{1}{4}=\min\{\frac{1}{4}, 2\times\frac{1}{6} \}$ from its $7$-neighbours by R6-R8. On the other hand, $v$ receives $\frac{1}{5}$ from its incident $5^+$-face by R2.
Thus,   $\mu^*(v)\geq  \frac{1}{4}+\frac{1}{5}-\frac{1}{3}>0$ after  $v$ sends $\frac{1}{3}$ to its incident $3$-face by R1. 
If $m_4(v)=3$, then $v$ has two $7(6^-)$-neighbours by Lemma \ref{7lem:4-1-vertex-neighbours}(b). By applying R6-R8, $v$ receives totally at least $\frac{1}{3}$ from its $7$-neighbours.  Thus,   $\mu^*(v)\geq  \frac{1}{3}-\frac{1}{3}=0$ after  $v$ sends $\frac{1}{3}$ to its incident $3$-face by R1. \medskip

\textbf{(2.2).}  Let $m_3(v)=2$. Since $0\leq m_4(v) \leq 2$, we consider the following cases:

If $m_4(v)=0$, then $v$ has either  a  $7(5^-)$-neighbour and a $6(4^-)$-neighbour or  two $7(6^-)$-neighbours by Lemma \ref{7lem:4-2-vertex-neighbours}(a). It follows that $v$ receives totally at least $\frac{1}{3}=\min\{\frac{1}{4}+\frac{1}{9}, 2\times\frac{1}{6} \}$ from its $6^+$-neighbours by R3-R4 and R6-R8. In addition, $v$ is incident to two $5^+$-faces, and by R2, $v$ receives $\frac{1}{5}$ from each of those $5^+$-faces.  Thus,   $\mu^*(v)\geq  \frac{1}{3}+2\times\frac{1}{5}-2\times\frac{1}{3}>0$ after  $v$ sends $\frac{1}{3}$ to each of its incident $3$-faces by R1. 

If $m_4(v)=1$, then $v$ has either  two $7(6^-)$-neighbours and two $6(4^-)$-neighbours or three $7(6^-)$-neighbours by Lemma \ref{7lem:4-2-vertex-neighbours}(b). In both cases, $v$ receives totally at least $\frac{1}{2}=\min\{2\times\frac{1}{6}+2\times\frac{1}{9}, 3\times\frac{1}{6} \}$ from its $6^+$-neighbours by R3-R4 and R6-R8. Also, $v$ receives $\frac{1}{5}$ from its incident $5^+$-face by R2. Thus,   $\mu^*(v)\geq  \frac{1}{2}+\frac{1}{5}-2\times\frac{1}{3}>0$ after  $v$ sends $\frac{1}{3}$ to each of its incident $3$-faces by R1. 

If $m_4(v)=2$, then $v$ has either  two  $7(5^-)$-neighbours and two $7(6)$-neighbours or  three $7(5^-)$-neighbours by Lemma \ref{7lem:4-2-vertex-neighbours}(c). In both cases, $v$ receives totally at least $\frac{3}{4}=\min\{2\times\frac{1}{4}+2\times\frac{1}{6}, 3\times\frac{1}{4} \}$ from its $7^+$-neighbours by R6-R8. Thus,   $\mu^*(v)\geq  \frac{3}{4}-2\times\frac{1}{3}>0$ after  $v$ sends $\frac{1}{3}$ to each of its incident $3$-faces by R1.  \medskip

\textbf{(2.3).}  Let $m_3(v)=3$. Suppose first that $m_4(v)=0$. It then follows from Lemma \ref{7lem:4-3-vertex-neighbours}(a) that then $v$ has either four $7(5^-)$-neighbours   or three $7(5^-)$-neighbours and a $7(6)$-neighbour or three $7(5^-)$-neighbours and a $6(4^-)$-neighbour or two $7(5^-)$-neighbours and two $7(6)$-neighbours. In each case, $v$ receives at least $\frac{5}{6}=\min\{4\times \frac{1}{4}, 3\times \frac{1}{4}+\frac{1}{6},3\times \frac{1}{4}+\frac{1}{9}, 2\times\frac{1}{4}+2\times\frac{1}{6} \}$ from its $6^+$-neighbours by R3-R8. In addition, $v$ receives $\frac{1}{5}$ from its incident $5^+$-face by R2.  Thus,   $\mu^*(v)\geq  \frac{5}{6}+ \frac{1}{5} -3\times\frac{1}{3}>0$ after  $v$ sends $\frac{1}{3}$ to each of its incident $3$-faces by R1. Now, we suppose that  $m_4(v)=1$. By Lemma \ref{7lem:4-3-vertex-neighbours}(b), all neighbours of $v$ are $7(5^-)$-vertices, and so $v$ receives totally $4\times \frac{1}{4}$ from its  $7(5^-)$-neighbours by R6-R7. Thus,  $\mu^*(v)\geq  4\times\frac{1}{4} -3\times\frac{1}{3}=0$ after  $v$ sends $\frac{1}{3}$ to each of its incident $3$-faces by R1. \medskip

\textbf{(3).} Let $k=5$.  The initial charge of $v$ is $\mu(v)=d(v)-4=1$. We distinguish three cases  according to the number of $3$-faces incident to $v$ as follows. \medskip

\textbf{(3.1).}  Let $m_3(v)\leq 3$. Obviously, $\mu^*(v)\geq 1-3\times\frac{1}{3}= 0$ after  $v$ sends $\frac{1}{3}$ to each of its incident $3$-faces by R1. \medskip 

\textbf{(3.2).}  Let $m_3(v)=4$. Suppose first that $m_4(v)=0$. By Lemma \ref{7lem:5-4-vertex}(d),  $v$ has two neighbours different from $5^-$- and $6(6)$-vertices, i.e., $v$ has two neighbours consisting of  $6(5^-)$- or $7$-vertices. It follows that $v$ receives totally at least $\frac{1}{6}=\min \{2\times \frac{1}{9},\frac{1}{9}+\frac{1}{12}, 2\times \frac{1}{12} \}$ from its $6^+$-neighbours by R3-R9, and $\frac{1}{5}$ from its incident $5^+$-face by R2.  Thus,  $\mu^*(v)\geq  1+\frac{1}{6}+\frac{1}{5} -4\times\frac{1}{3}>0$ after  $v$ sends $\frac{1}{3}$ to each of its incident $3$-faces by R1. 

Next we suppose that $m_4(v)=1$. By Lemma \ref{7lem:5-4-vertex}(e), $v$ has at least three $6^+$-neighbours different from $6(6)$-vertex. If $v$ has no $7(7)$-neighbour, then  $v$ receives totally at least $3\times \frac{1}{9}$ from its $6^+$-neighbours by R3-R8. 
If $v$ has a $7(7)$-neighbours, then $v$ has four $6^+$-neighbours different from $6(6)$-vertex by Lemma \ref{7lem:5-4-vertex}(f), and so $v$ receives totally at least $4\times \frac{1}{12}$ from its $6^+$-neighbours by R3-R9. Thus,  $\mu^*(v)\geq  1+\frac{1}{3} -4\times\frac{1}{3}=0$ after  $v$ sends $\frac{1}{3}$ to each of its incident $3$-faces by R1. \medskip

\textbf{(3.3).}  Let $m_3(v)=5$. Note that $v$ has no $4^-$-neighbour, and $v$ has at most two $5$-neighbours by Lemma \ref{7lem:5-5-vertex}(a). \medskip

\textbf{(3.3.1).} Let $n_5(v)=0$. By Lemma \ref{7lem:5-5-vertex}(b), $v$ has at most one $6(6)$-neighbour, i.e., $v$ has at least four $6^+$-neighbours different from $6(6)$-vertex. First, suppose that $v$ has a $6(6)$-neighbour. If $v$ has no $6(5^-)$-neighbour, then all the neighbours of $v$ different from $6(6)$-vertex are $7$-vertices,  and by applying R6-R9,  $v$ receives totally at least $4\times\frac{1}{6}$ from its $7$-neighbours. So,   $\mu^*(v)\geq  1+4\times\frac{1}{6} -5\times\frac{1}{3}=0$ after  $v$ sends $\frac{1}{3}$ to each of its incident $3$-faces by R1.
On the other hand, if $v$ has a $6(5^-)$-neighbour, then,  by Lemma \ref{7lem:5-5-vertex}(d), $v$ has three $7$-neighbours, at least one of which is a $7(5^-)$-vertex. 
Then $v$ receives at least $\frac{1}{9}$ from its $6(5^-)$-neighbour by R3-R5, at least $\frac{2}{9}$ from each of its $7(5^-)$-neighbours by R6-R7, at least $\frac{1}{6}$ from each of its other $7$-neighbours by R8-R9. So,  $v$ receives totally at least $\frac{1}{9}+ \frac{2}{9}+2\times\frac{1}{6}=\frac{2}{3}$ from its $6^+$-neighbours.   Thus,   $\mu^*(v)\geq  1+\frac{2}{3} -5\times\frac{1}{3}=0$ after  $v$ sends $\frac{1}{3}$ to each of its incident $3$-faces by R1.

Suppose now that $v$ has no $6(6)$-neighbour. That is, all neighbours of $v$ are $6^+$-vertices different from $6(6)$-vertex. By Lemma \ref{7lem:5-5-vertex}(b), $v$ has at least one $7$-neighbour, say $x$. Note that if $v$ has exactly four $6$-neighbours, then $x$ must be a $7(5^-)$-vertex by Lemma \ref{7lem:5-5-vertex}(c). This means that  $v$ has either four $6$-neighbours and one $7(5^-)$-neighbour or at most three $6$-neighbours and two $7$-neighbours. In each case, $v$ receives totally at least $\frac{2}{3}=\min \{4\times \frac{1}{9}+\frac{2}{9}, 3\times \frac{1}{9}+2\times\frac{1}{6}\}$  from its $6^+$-neighbours by R3-R9. Thus,  $\mu^*(v)\geq  1+\frac{2}{3}-5\times\frac{1}{3}=0$ after  $v$ sends $\frac{1}{3}$ to each of its incident $3$-faces by R1.\medskip

\textbf{(3.3.2).} Let $n_5(v)=1$. Notice that $v$ has no $6(6)$-neighbours by Lemma \ref{7lem:5-5-vertex}(e), i.e., all neighbours of $v$ but one are $7$- or $6(5^-)$-vertices. Moreover, $v$ has at most two $6(5^-)$-neighbours by Lemma \ref{7lem:5-5-vertex}(e). We distinguish three cases  according to the number of $6(5^-)$-vertices adjacent to $v$ as follows. 

First suppose that $v$ has no $6(5^-)$-neighbour. So, $v$ has four $7$-neighbours,  and by applying R6-R9,  $v$ receives totally at least $4\times\frac{1}{6}$ from its $7$-neighbours. So,   $\mu^*(v)\geq  1+4\times\frac{1}{6} -5\times\frac{1}{3}=0$ after  $v$ sends $\frac{1}{3}$ to each of its incident $3$-faces by R1.

Suppose next that $v$ has exactly one $6(5^-)$-neighbour. Then $v$ has three $7$-neighbours, and   by Lemma \ref{7lem:5-5-vertex}(f), one of which is a $7(5^-)$-vertex. 
Thus, $v$ receives totally at least $\frac{1}{9}+2\times \frac{1}{6}+\frac{2}{9}$ from its $6^+$-neighbours by R3-R9.   Hence,  $\mu^*(v)\geq  1+\frac{2}{3} -5\times\frac{1}{3}=0$ after  $v$ sends $\frac{1}{3}$ to each of its incident $3$-faces by R1.

Finally, suppose that $v$ has exactly two $6(5^-)$-neighbours. Then, by Lemma \ref{7lem:5-5-vertex}(g), $v$ has two $7(5^-)$-neighbours. 
So, $v$ receives totally at least $2\times\frac{1}{9}+2\times\frac{2}{9}$ from its $6^+$-neighbours by R3-R7.   Hence,  $\mu^*(v)\geq  1+\frac{2}{3} -5\times\frac{1}{3}=0$ after  $v$ sends $\frac{1}{3}$ to each of its incident $3$-faces by R1.\medskip

\textbf{(3.3.3).} Let $n_5(v)=2$.  Then $v$ has three $7(5^-)$-neighbours by Lemma \ref{7lem:5-5-vertex}(h), and so $v$ receives $\frac{2}{9}$ from each of its $7(5^-)$-neighbours by R6-R7.  Thus,  $\mu^*(v)\geq  1+3\times\frac{2}{9} -5\times\frac{1}{3}=0$ after  $v$ sends $\frac{1}{3}$ to each of its incident $3$-faces by R1. \medskip

\textbf{(4).} Let $k=6$. The initial charge of $v$ is $\mu(v)=d(v)-4=2$. If $m_3(v)\leq 3$, then $\mu^*(v)\geq  2 -3\times \frac{1}{3}-6\times\frac{1}{6}=0$  after  $v$ sends $\frac{1}{3}$ to each of its incident $3$-faces by R1, and $\frac{1}{6}$ to each of its neighbours by R3. Similarly, if  $m_3(v)=4$, then $\mu^*(v)\geq  2 -4\times \frac{1}{3}-6\times\frac{1}{9}=0$  after  $v$ sends $\frac{1}{3}$ to each of its incident $3$-faces by R1, and $\frac{1}{9}$ to each of its neighbours by R4.
On the other hand,  if $m_3(v)=6$, then $\mu^*(v)\geq  2 -6\times\frac{1}{3}=0$ after  $v$ sends $\frac{1}{3}$ to each of its incident $3$-faces by R1.
Therefore, we further assume that $m_3(v)=5$.  By Lemma \ref{7lem:6-5-vertex}, $v$ has at most four bad $5$-neighbours. Notice that if  $v$ has at most three bad $5$-neighbours, then $\mu^*(v)\geq  2 -5\times\frac{1}{3}-3\times\frac{1}{9}=0$ after  $v$ sends $\frac{1}{3}$ to each of its incident $3$-faces by R1, and $\frac{1}{9}$ to each of its bad $5$-neighbours by R5. Next we assume that $v$ has exactly four bad $5$-neighbours. 
If $m_4(v)=0$, then $v$ receives $\frac{1}{5}$ from its incident $5^+$-face by R2, and so $\mu^*(v)\geq  2+\frac{1}{5} -5\times \frac{1}{3}-4\times\frac{1}{9}>0$ after  $v$ sends $\frac{1}{3}$ to each of its incident $3$-face by R1, and $\frac{1}{9}$ to each of its bad $5$-neighbours by R5.
If $m_4(v)=1$, then $v$ has two $7(5^-)$-neighbours by Lemma \ref{7lem:6-5-vertex}, and so $v$ receives at least $\frac{1}{18}$ from each of its $7(5^-)$-neighbours by R6 and R7(e).  
Thus, $\mu^*(v)\geq  2+2\times\frac{1}{18} -5\times \frac{1}{3}-4\times\frac{1}{9}=0$ after  $v$ sends $\frac{1}{3}$ to each of its incident $3$-face by R1, and $\frac{1}{9}$ to each of its bad $5$-neighbours by R5. \medskip

\textbf{(5).} Let $k=7$. The initial charge of $v$ is $\mu(v)=d(v)-4=3$.  Notice first that if $m_3(v)\leq 3$, then we have $\mu^*(v)\geq  3 -3\times\frac{1}{3}-7\times\frac{2}{7}=0$ after  $v$ sends $\frac{1}{3}$ to each of its incident $3$-faces by R1, and $\frac{2}{7}$ to each of its neighbours by R6. Therefore, we may  assume that $4 \leq m_3(v) \leq 7$. \medskip

\textbf{(5.1).}  Let $m_3(v)=4$. By Lemma \ref{7lem:7-vertex}(b), $v$ has at most six $4$-neighbours. If  $n_4(v)\leq 4$, then $v$ sends $\frac{1}{4}$ to each of its bad $4$-neighbours by R7(b), and at most $\frac{2}{9}$ to each of its other neighbours by R7. Consequently, $\mu^*(v)\geq  3 -4\times\frac{1}{4}-3\times\frac{2}{9}-4\times\frac{1}{3}=0$ after  $v$ sends $\frac{1}{3}$ to each of its incident $3$-faces by R1. Suppose now that $5\leq n_4(v)\leq 6$. Clearly, $v$ has no $5(5)$-neighbour, since a $4$ vertex is not adjacent to any $5(5)$-vertex by Lemma \ref{7lem:4-vertex-neighbours}(a). 
If $v$ has a $3$-neighbour $x$, then the edge $xv$ is contained in at least one $5^+$-face by Lemma \ref{7lem:3-vertex}. So, $v$ receives at least $\frac{1}{5}$ from its incident $5^+$-face by R2. Thus $\mu^*(v)\geq  3+\frac{1}{5} -6\times\frac{1}{4}-\frac{1}{5}-4\times\frac{1}{3}>0$ after  $v$ sends $\frac{1}{4}$ to each of its bad $4$-neighbours by R7(b),  at most $\frac{1}{5}$ to each of its neighbours other than $4$-vertex by R7, and $\frac{1}{3}$ to each of its incident $3$-faces by R1.
On the other hand, if $v$ has no $3$-neighbour, then   $\mu^*(v)\geq  3 -6\times\frac{1}{4}-\frac{1}{9}-4\times\frac{1}{3}>0$ after $v$ sends $\frac{1}{4}$ to each of its bad $4$-neighbours by R7(b),  at most $\frac{1}{9}$ to each of its neighbours other than $4$-vertex by R7, and $\frac{1}{3}$ to each of its incident $3$-faces by R1. \medskip

\textbf{(5.2).}  Let $m_3(v)=5$. Notice that $v$ has at most one $3$-neighbour, since a $3$-vertex does not incident to any $3$-faces by Lemma \ref{7lem:3-vertex}. First we assume that $v$ has a $3$-neighbour $x$. In such a case, all $3$-faces incident to $v$ must have a  consecutive ordering. Moreover, the edge $xv$ is contained in at least one $5^+$-face by Lemma \ref{7lem:3-vertex}. So, $v$ receives at least $\frac{1}{5}$ from its incident $5^+$-face by R2. 
Observe first that if $v$ has three $4$-neighbours, then at least one of them must be a support neighbour of $v$. In such a case, $v$ has at most four poor neighbours by Lemma \ref{7lem:7-vertex}(h), and recall that a poor vertex receives at most $\frac{1}{4}$ from $v$ by R7(b),(c).  It then follows that  $\mu^*(v)\geq  3 +\frac{1}{5}-5\times\frac{1}{3}-\frac{1}{5}-4\times\frac{1}{4}-2\times\frac{1}{9}>0$ after $v$ sends $\frac{1}{3}$ to each of its incident $3$-faces by R1, $\frac{1}{5}$ to its $3$-neighbour $x$ by  R7(a), at most $\frac{1}{4}$ to each of its poor neighbours by R7(b),(c), and  at most $\frac{1}{9}$ to each of its other neighbours by R7(d),(e).   
We now suppose that $v$ has at most two $4$-neighbours. It then follows from Lemma \ref{7lem:7-vertex}(c) that $v$ has at most four poor neighbours. Thus, similarly as above, we have $\mu^*(v)\geq  3 +\frac{1}{5}-5\times\frac{1}{3}-\frac{1}{5}-2\times\frac{1}{4}-2\times\frac{2}{9}-2\times\frac{1}{9}>0$ after $v$ sends $\frac{1}{3}$ to each of its incident $3$-faces by R1, $\frac{1}{5}$ to its $3$-neighbour $x$ by  R7(a), at most $\frac{1}{4}$ to each of its bad $4$-neighbours by R7(b), $\frac{2}{9}$ to each of its $5(5)$-neighbours by R7(c), and  at most $\frac{1}{9}$ to each of its other neighbours by R7(d),(e).

We may further assume that $v$ has no $3$-neighbours. By Lemma \ref{7lem:7-vertex}(b), $v$ has at most six $4$-neighbours, i.e., $n_4(v)\leq 6$. On the other hand, recall that $m_3(v)=5$. Therefore, if $v$ has more than four $4$-neighbours, then there exists $v_i\in N(v)$ such that $vv_i$ is contained in two $3$-faces, i.e., $v_i$ is a support neighbour. \medskip

\textbf{(5.2.1).}  Let $ n_4(v)\leq 2$. Note that $v$ has at most four poor neighbours by Lemma \ref{7lem:7-vertex}(c). 
Then $\mu^*(v)\geq  3-5\times \frac{1}{3} -2\times \frac{1}{4}-2\times \frac{2}{9} -3\times\frac{1}{9}>0$ after $v$ sends $\frac{1}{3}$ to each of its incident $3$-faces by R1, $\frac{1}{4}$ to each of its bad $4$-neighbours by R7(b), $\frac{2}{9}$ to each of its $5(5)$-neighbours by R7(c), at most $\frac{1}{9}$ to each of its other neighbours by R7(d),(e). \medskip

\textbf{(5.2.2).}  Let $n_4(v)=3$. By Lemma \ref{7lem:7-vertex}(e), $v$ has at most two $5(5)$-neighbours. Suppose first that $v$ has exactly two $5(5)$-neighbours  $x, y$. Since a $4$-vertex has no $5(5)$-neighbours by Lemma \ref{7lem:4-vertex-neighbours}(a), we deduce that $x$ and $y$ are adjacent. On the other hand, a $5(5)$-vertex has at most one bad $5$-neighbour by Lemma \ref{7lem:5-5-vertex}(b). This implies that the neighbours of $v$ other than $4$- and $5(5)$-vertices are $5(3^-)$- or $6^+$-vertices. Thus we have $\mu^*(v)\geq  3-5\times \frac{1}{3} -3\times \frac{1}{4}-2\times \frac{2}{9} -2\times\frac{1}{18}>0$ after $v$ sends $\frac{1}{3}$ to each of its incident $3$-faces by R1,  $\frac{1}{4}$ to each of its bad $4$-neighbours by R7(b), $\frac{2}{9}$ to each of its $5(5)$-neighbours by R7(c), at most $\frac{1}{18}$ to each of its other neighbours by R7(e). 
Suppose now that $v$ has at most one $5(5)$-neighbour. In such a case,  we have again $\mu^*(v)\geq  3-5\times \frac{1}{3} -3\times \frac{1}{4}- \frac{2}{9} -3\times\frac{1}{9}>0$ after $v$ sends $\frac{1}{3}$ to each of its incident $3$-faces by R1,  $\frac{1}{4}$ to each of its bad $4$-neighbours by R7(b), $\frac{2}{9}$ to each of its $5(5)$-neighbours by R7(c), at most $\frac{1}{9}$ to each of its other neighbours by R7(d),(e). \medskip

\textbf{(5.2.3).}  Let $n_4(v)=4$. By Lemma \ref{7lem:7-vertex}(f), $v$ has at most one $5(5)$-neighbour. First, suppose that $v$ has no $5(5)$-neighbours, then $\mu^*(v)\geq  3-5\times \frac{1}{3} -4\times \frac{1}{4} -3\times\frac{1}{9}=0$ after $v$ sends $\frac{1}{3}$ to each of its incident $3$-faces by R1,  $\frac{1}{4}$ to each of its bad $4$-neighbours by R7(b), at most $\frac{1}{9}$ to each of its other neighbours by R7(d)-(e). We may further assume that $v$ has exactly one $5(5)$-neighbour, say $v_1$. Recall that each of $v_2,v_7$ is a $5^+$-vertex by Lemmas \ref{7lem:3-vertex} and \ref{7lem:4-vertex-neighbours}(a). 
Moreover, for $i\in\{2,7\}$,  if $v_i$ is adjacent to a $4$-vertex, then $v_i$ is different from $5(4)$-vertex by Lemma \ref{7lem:5-4-vertex}(b).  Recall also that a $4(1^+)$-vertex is adjacent to at most one $4$-vertex by Lemma \ref{7lem:4-vertex-neighbours}(b).

Suppose first that $v$ has a support neighbour $x$. If $v$ has no $5(4)$-neighbour, then  $\mu^*(v)\geq  3-5\times \frac{1}{3} -4\times \frac{1}{4} -\frac{2}{9}-2\times\frac{1}{18}=0$ after $v$ sends $\frac{1}{3}$ to each of its incident $3$-faces by R1,  $\frac{1}{4}$ to each of its bad $4$-neighbours by R7(b), $\frac{2}{9}$ to its $5(5)$-neighbour by R7(c), at most $\frac{1}{18}$ to each of its  other neighbours by R7(e). If $v$ has a $5(4)$-neighbour, then $v$ has also a $7$-neighbour by Lemma \ref{7lem:7-vertex}(g).  Thus, $\mu^*(v)\geq  3-5\times \frac{1}{3} -4\times \frac{1}{4} -\frac{2}{9}-\frac{1}{9}=0$ after $v$ sends $\frac{1}{3}$ to each of its incident $3$-faces by R1,  $\frac{1}{4}$ to each of its bad $4$-neighbours by R7(b), $\frac{2}{9}$ to its $5(5)$-neighbour by R7(c),  $\frac{1}{9}$ to its $5(4)$-neighbour by R7(d).

Next, suppose that $v$ has no support neighbour. This means that each $4$-neighbour of $v$ is incident to a $4^+$-face containing $v$.  Obviously, $v$ has one of the two configurations depicted in Figure \ref{7fig:7-5-config}, where $v_i,v_j,v_k,v_\ell$ are $4$-vertices. By Lemma \ref{7lem:4-vertex-neighbours}(a), a $4$-vertex has no $5(5)$-neighbours, so the configuration in Figure \ref{7fig:7-5-config}(a) is not possible for $v$. So the neighbours of $v$ can only form as depicted in Figure \ref{7fig:7-5-config}(b). We then infer that $v$ cannot have any $5(4)$-neighbour by Lemma \ref{7lem:5-4-vertex}(b). Thus $\mu^*(v)\geq  3-5\times \frac{1}{3} -4\times \frac{1}{4} -\frac{2}{9}-2\times\frac{1}{18}=0$ after $v$ sends $\frac{1}{3}$ to each of its incident $3$-faces by R1,  $\frac{1}{4}$ to each of its bad $4$-neighbours by R7(b), $\frac{2}{9}$ to its $5(5)$-neighbour by R7(c), at most $\frac{1}{18}$ to each of its  other neighbours by R7(e). \medskip

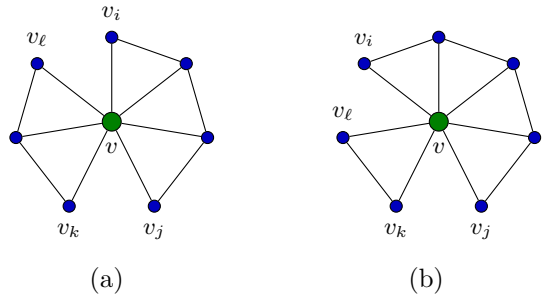
\begin{figure}[htb]
\centering   
\subfigure[]{
\begin{tikzpicture}[scale=.7]
\node [nodr] at (0,0) (v) [label=below: {\scriptsize $v$}] {};
\node [nod2] at (.8,-1.6) (v3)  [label=below:{\scriptsize $v_j$}] {}
	edge  (v);		
\node [nod2] at (-.8,-1.6) (v4)  [label=below:{\scriptsize $v_k$}] {}
	edge [] (v);
\node [nod2] at (-1.8,-.3) (v5)   {}
	edge [] (v)
	edge [] (v4);
\node [nod2] at (-1.4,1.1) (v6)  [label=above:{\scriptsize $v_\ell$}] {}
	edge [] (v)
	edge [] (v5);	
\node [nod2] at (1.4,1.1) (v1)  {}
	edge [] (v);
\node [nod2] at (1.8,-.3) (v2)   {}
	edge [] (v)
	edge [] (v3)
	edge [] (v1);
\node [nod2] at (0,1.6) (v7)   [label=above:{\scriptsize $v_i$}] {}
	edge [] (v)
	edge [] (v1);				
\end{tikzpicture}  }\hspace*{1cm}
\subfigure[]{
\begin{tikzpicture}[scale=.7]
\node [nodr] at (0,0) (v) [label=below: {\scriptsize $v$}] {};
\node [nod2] at (.8,-1.6) (v3)  [label=below:{\scriptsize $v_j$}] {}
	edge  (v);		
\node [nod2] at (-.8,-1.6) (v4) [label=below:{\scriptsize $v_k$}] {}
	edge [] (v);
\node [nod2] at (-1.8,-.3) (v5)   [label=above:{\scriptsize $v_\ell$}] {}
	edge [] (v)
	edge [] (v4);
\node [nod2] at (-1.4,1.1) (v6) [label=above:{\scriptsize $v_i$}] {}
	edge [] (v);	
\node [nod2] at (1.4,1.1) (v1)  {}
	edge [] (v);
\node [nod2] at (1.8,-.3) (v2)   {}
	edge [] (v)
	edge [] (v3)
	edge [] (v1);
\node [nod2] at (0,1.6) (v7)   {}
	edge [] (v)
	edge [] (v1)
	edge [] (v6);				
\end{tikzpicture}  }
\caption{Two possible configurations of a $7(5)$-vertex $v$.}
\label{7fig:7-5-config}
\end{figure}

\textbf{(5.2.4).}  Let $n_4(v)=5$. Then $v$ has no $5(5)$-neighbours by Lemma \ref{7lem:4-vertex-neighbours}(a). Also, $v$ has at most one $5(4)$-neighbour by Lemma \ref{7lem:7-vertex}(i). 
Suppose first that $v$ has a $5(4)$-neighbour or two $6(5)$-neighbours. Then  $v$ is incident to two $5^+$-face by Lemma \ref{7lem:7-vertex}(i). By applying R2, $v$ receives $\frac{1}{5}$ from each of its incident $5^+$-faces.
In such a case, we have $\mu^*(v)\geq  3+2\times\frac{1}{5}-5\times \frac{1}{3} -5\times \frac{1}{4} -\frac{1}{9}-\frac{1}{18}>0$ after $v$ sends $\frac{1}{3}$ to each of its incident $3$-faces by R1,  $\frac{1}{4}$ to each of its bad $4$-neighbours by R7(b), $\frac{1}{9}$ to its $5(4)$-neighbour by R7(d),   $\frac{1}{18}$ to each of its $6(5)$-neighbours by R7(e).
Now we suppose that $v$ has no $5(4)$-neighbour, and $v$ has at most one $6(5)$-neighbour.  Then $\mu^*(v)\geq  3-5\times \frac{1}{3} -5\times \frac{1}{4} -\frac{1}{18}>0$ after $v$ sends $\frac{1}{3}$ to each of its incident $3$-faces by R1,  $\frac{1}{4}$ to each of its bad $4$-neighbours by R7(b),  $\frac{1}{18}$ to its  $6(5)$-neighbour by R7(e). \medskip

\textbf{(5.2.5).} Let $n_4(v)=6$. By Lemma \ref{7lem:7-vertex}(j),  $v$ is incident to two $5^+$-faces, in particular, $v$ has no $5(4)$- or $5(5)$-neighbours. By R2, $v$ receives $\frac{1}{5}$ from each of its incident $5^+$-faces. Thus we have  $\mu^*(v)\geq  3+2\times\frac{1}{5}-5\times \frac{1}{3} -6\times \frac{1}{4} -\frac{1}{18}>0$ after $v$ sends $\frac{1}{3}$ to each of its incident $3$-faces by R1,  $\frac{1}{4}$ to each of its bad $4$-neighbours by R7(b),  $\frac{1}{18}$ to its  $6(5)$-neighbour by R7(e). \medskip

\textbf{(5.3).}  Let $m_3(v)=6$.  By Lemma \ref{7lem:7-vertex}(d), $v$ has at most five poor neighbours. 
If $v$ has at most four poor neighbours, then  $\mu^*(v)\geq  3-6\times \frac{1}{3}-4\times \frac{1}{6} -3\times\frac{1}{9}=0$ after $v$ sends $\frac{1}{3}$ to each of its incident $3$-faces by R1,  $\frac{1}{6}$ to each of its poor neighbours by R8(a),(c),  $\frac{1}{9}$ to each of its  $5(4)$-neighbours by R8(b). Suppose further that  $v$ has exactly five poor neighbours. First, observe that $v$ has no $4(0)$-neighbours as $m_3(v)=6$, i.e., all $4$-neighbours of $v$ are bad $4$-vertices. Note that, by Lemma \ref{7lem:5-4-vertex}(a),(b), a $5(4)$-neighbour of $v$ is not adjacent to two $5(5)$-vertices, and  is not adjacent to both  a $5(5)$- and $4$-vertices. Moreover, a $4$-vertex  has no $5(5)$-neighbours by Lemma \ref{7lem:4-vertex-neighbours}(a). Furthermore,  a bad $4$-vertex  has at most one $4$-neighbour by Lemma \ref{7lem:4-vertex-neighbours}(b), and   has no both $4$- and $5(4)$-neighbour by Lemma \ref{7lem:4-vertex-neighbours}(c). In addition, by Lemma \ref{7lem:5-5-vertex}(b), a $5(5)$-vertex is adjacent to at most one $5(5)$-vertex.
All those facts imply that $v$ has at most one $5(4)$-neighbour. Hence, $\mu^*(v)\geq  3-6\times \frac{1}{3}-5\times \frac{1}{6} -\frac{1}{9}>0$ after $v$ sends $\frac{1}{3}$ to each of its incident $3$-faces by R1,  $\frac{1}{6}$ to each of its poor neighbours by R8(a),(c),  $\frac{1}{9}$ to its  $5(4)$-neighbour by R8(b). \medskip

\textbf{(5.4).}  Let $m_3(v)=7$. By Lemma \ref{7lem:5-5-vertex}(b), a $5(5)$-vertex is adjacent to at most one $5(5)$-vertex. So, we deduce that $v$ has at most four $5(5)$-neighbours.  In particular, if $v$ has four such neighbours, then $v$ cannot have any $5(4)$-neighbours by Lemmas \ref{7lem:5-4-vertex}(a) and \ref{7lem:5-5-vertex}(b). Thus $\mu^*(v)\geq  3-7\times \frac{1}{3}-4\times \frac{1}{6}=0$ after  $v$ sends $\frac{1}{3}$ to each of its incident $3$-faces by R1 and $\frac{1}{6}$ to each of its $5(5)$-neighbours by R9(a). 
If $v$ has three $5(5)$-neighbours, then  $v$ has at most two  $5(4)$-neighbours by Lemmas \ref{7lem:5-4-vertex}(a) and \ref{7lem:5-5-vertex}(b), and so $\mu^*(v)\geq  3-7\times \frac{1}{3}-3\times \frac{1}{6}-2\times \frac{1}{12}=0$  after  $v$ sends $\frac{1}{3}$ to each of its incident $3$-face by R1, $\frac{1}{6}$ to each of its $5(5)$-neighbour by R9(a), and  $\frac{1}{12}$ to each of its $5(4)$-neighbours by R9(b).
Suppose now that $v$ has at most two $5(5)$-neighbours. By Lemma \ref{7lem:5-5-vertex}(b), a $5(5)$-vertex has at most one bad $5$-neighbour. We then deduce that $v$ has either seven $5(4)$-neighbours or at most  six bad $5$-neighbours. In the worst case, $\mu^*(v)\geq  3-7\times \frac{1}{3}-2\times \frac{1}{6}-4\times \frac{1}{12}=0$  after  $v$ sends $\frac{1}{3}$ to each of its incident $3$-face by R1, $\frac{1}{6}$ to each of its $5(5)$-neighbour by R9(a), and  $\frac{1}{12}$ to each of its $5(4)$-neighbours by R9(b).\medskip







\subsection{The case \texorpdfstring{$\D=8$}{D8} } \label{sub:8}~~\medskip

Recall that $G$ does not admit any $2$-distance $23$-coloring, whereas any planar graph $G'$ obtained from $G$ with smaller value of $|V(G')|+|E(G')|$ admits a $2$-distance $23$-coloring. \medskip

We begin by establishing several structural properties of $G$, analogous to those obtained for the cases  $\Delta=6$ and $\Delta=7$.  
The proof of the following lemma is omitted, as it follows from the same arguments used in Lemma~\ref{6lem:min-deg-4}.

\begin{lemma}\label{8lem:min-deg-3}
$\delta(G)\geq 3$.
\end{lemma}

\begin{lemma}\label{8lem:3-vertex}
Let $v$ be a $3$-vertex. Then $m_3(v)=0$ and $m_4(v)\leq 1$. In particular,
\begin{itemize}
\item[$(a)$] $v$ has no $6^-$-neighbour,
\item[$(b)$] if $m_4(v)=0$, then $v$ has two $8(6^-)$-neighbours,
\item[$(c)$] if $m_4(v)=1$, then $v$ has three $8(6^-)$-neighbours.
\end{itemize}
\end{lemma}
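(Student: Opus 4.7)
The plan is to follow the template of Lemmas~\ref{6lem:3-vertex} and~\ref{7lem:3-vertex}, with the target bound $2\Delta+7=23$ when $\Delta=8$. Write $N(v)=\{v_1,v_2,v_3\}$ in cyclic order. In every case I would exhibit a planar graph $G'$ obtained from $G$ by deleting $v$ and inserting a few edges inside $N(v)$, such that $G'$ is proper with respect to $G$ and $|V(G')|+|E(G')|<|V(G)|+|E(G)|$. By minimality $G'$ admits a $2$-distance $23$-coloring, and this extends to $v$ as soon as $d_2(v)\le 22$.

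The basic count is $d_2(v)\le d(v)+\sum_{i=1}^{3}(d(v_i)-1)\le 24$. Each $3$-face incident to $v$ sharpens this by $2$, since the two neighbours of $v$ on that face are in $N(v)$ but also in each other's neighbourhood; each $4$-face incident to $v$ sharpens it by $1$, via its fourth vertex, which is a common neighbour of two of the $v_i$'s. For the first two assertions I would then run the standard reductions: if $v_1vv_2$ is a $3$-face, these two savings yield $d_2(v)\le 22$ and $G'=G-v+\{v_1v_3\}$ is proper, a contradiction; if $v_1vv_2x$ and $v_2vv_3y$ are two $4$-faces at $v$, they contribute two savings (whether or not $x=y$), so again $d_2(v)\le 22$ and $G'=G-v+\{v_1v_3\}$ works. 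For part~(a), once $m_3(v)=0$ and $m_4(v)\le 1$ are available, a single $6^-$-neighbour $v_1$ alone gives $d_2(v)\le 3+5+7+7=22$, and $G'=G-v+\{v_1v_2,v_1v_3\}$ is proper via the path $v_2v_1v_3$ of length $2$.

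The observation driving (b) and (c) is structural: since $m_3(v)=0$, both faces containing the edge $vv_i$ are $4^+$-faces, so at most $d(v_i)-2$ of the faces incident to $v_i$ can be $3$-faces. In particular, every degree-$8$ neighbour of $v$ is automatically an $8(6^-)$-vertex, and combined with (a) each $v_i$ is either a $7$-vertex or an $8(6^-)$-vertex. For (b), with $m_4(v)=0$ no face contributes any overlap; if at most one $v_i$ had degree $8$, then two of the $v_i$'s would be $7$-vertices and $d_2(v)\le 3+6+6+7=22$, yielding a contradiction via $G'=G-v+\{v_1v_2,v_1v_3\}$. Hence at least two $v_i$'s have degree $8$, so both are $8(6^-)$-vertices. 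For (c), with $m_4(v)=1$ the single $4$-face saves one overlap, and already a single $7$-neighbour gives $d_2(v)\le 3+6+7+7-1=22$ and the same type of reduction; therefore all three $v_i$'s have degree $8$, hence all three are $8(6^-)$-vertices.

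The only point requiring care is the verification that each chosen $G'$ is proper with respect to $G$: since $v$ has only three neighbours, the distance-at-most-$2$ condition needs to be preserved only for the at most $\binom{3}{2}=3$ pairs in $N(v)$ whose length-$2$ witness was $v$ itself, and this is easily arranged by the small set of edges added. Beyond that, the proof is pure bookkeeping of overlap savings from $3$- and $4$-faces at $v$, entirely analogous in spirit to the $\Delta=6$ and $\Delta=7$ treatments, with the extra slack provided by $\Delta=8$ making each estimate go through comfortably.
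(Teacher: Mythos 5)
Your proposal is correct and follows essentially the same route as the paper: you rule out $m_3(v)\geq 1$, $m_4(v)\geq 2$, and a $6^-$-neighbour by the same reductions $G'=G-v+\{v_1v_3\}$ or $G'=G-v+\{v_1v_2,v_1v_3\}$, and then derive (b) and (c) by counting $d_2(v)$ to bound the number of $7$-neighbours. The one place you go further than the paper is in explicitly justifying that any degree-$8$ neighbour must be an $8(6^-)$-vertex (because $m_3(v)=0$ forces the two faces along $vv_i$ to be $4^+$-faces); the paper leaves this step implicit, so your version is a slightly more complete write-up of the same argument.
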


\begin{proof}
If $v$ is incident to a $3$-face, say $f_1=v_1 v v_2$, then we set $G'=G-v+\{v_1v_3\}$.  
If $v$ is incident to two $4$-faces, say $f_1=v_1vv_2x$ and $f_2=v_2vv_3y$, then we again set $G'=G-v+\{v_1v_3\}$.  
If $v$ is adjacent to a $7^-$-vertex, say $v_1$, then we set $G'=G-v+\{v_1v_2,v_1v_3\}$.
In each case, $G'$ remains proper with respect to $G$, and moreover $d_2(v)\leq 22$ by Lemma~\ref{lem:d2v}.  
This yields a contradiction by Remark~\ref{rem:proper}.  
Thus, none of the above configurations can occur.  
\end{proof}

Since a $3$-vertex cannot have any $6^-$-neighbour by Lemma~\ref{8lem:3-vertex}(a), it follows that every vertex of degree $4$, $5$, or $6$ has no $3$-neighbour.  
This fact will be assumed in the remainder of this section.

\begin{lemma}\label{8lem:4-vertex-neighbours}
Let $v$ be a $4$-vertex with $m_3(v)\geq 1$.
\begin{itemize}
\item[$(a)$] If $m_3(v)=1$, then $v$ has no $5(5)$-neighbour. In particular, $v$ has at most one $4$-neighbour.
\item[$(b)$] If $m_3(v)=2$, then $v$ has at most one $5^-$-neighbour.
\item[$(c)$] If $m_3(v)\geq 3$, then $v$ has neither a $4$-neighbour nor a $5(4^+)$-neighbour.
\end{itemize}
\end{lemma}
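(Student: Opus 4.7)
My plan is to handle each part via the paper's standard template: assume the forbidden configuration, establish $d_2(v)\leq 22$ using the bound
\[
d_2(v) \;\leq\; d(v) + \sum_{i=1}^{d(v)} (d(v_i)-1) - 2\,e(G[N(v)]),
\]
construct a proper graph $G'$ obtained from $G-v$ by adding a small number of chords inside $N(v)$ (so $G'$ is planar with smaller $|V|+|E|$), apply minimality to $2$-distance $23$-color $G'$, and extend the coloring to $v$ using one of the at least $23 - d_2(v) \geq 1$ available colors.

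Part $(a)$ combines a direct structural observation with the template. At any $5(5)$-vertex $w$, the identity $2\,m_3(w) = 10 = 2\,d(w)$ together with each edge lying in at most two $3$-faces forces every edge of $w$ to lie in exactly two $3$-faces; a $5(5)$-neighbour $w$ of $v$ would thus place $vw$ in two $3$-faces of $v$, contradicting $m_3(v)=1$. For the at-most-one $4$-neighbour statement, two $4$-neighbours yield degrees $(4,4,8,8)$ and $e(G[N(v)]) \geq 1$, giving $d_2(v) \leq 4+(3+3+7+7)-2 = 22$; a suitable $G'$ is $G-v+\{v_1v_3,v_1v_4\}$ when $v_1$ is a $4$-neighbour inside the $3$-face $v_1vv_2$.

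For part $(b)$, two neighbours from $\{4,5\}$-vertices combined with $e(G[N(v)])\geq 2$ yield $d_2(v) \leq 4+(4+4+7+7)-4 = 22$; the construction of $G'$ splits along the two geometric configurations of the two $3$-faces at $v$ as in Lemma~\ref{6lem:4-2-vertex}. For part $(c)$ with a $4$-neighbour, $d_2(v) \leq 4+(3+7+7+7)-6 = 22$ directly. With a $5(4^+)$-neighbour $w$ the raw bound gives $21$ when $m_3(v)=4$ (since then $e(G[N(v)])\geq 4$) but only $23$ when $m_3(v)=3$. To save the last unit here, I exploit the cyclic ordering at $w$: the $3$-face(s) of $v$ through $w$ already prescribe one or two consecutive positions among $w$'s five cyclic neighbours, and the remaining $3$-faces forced by $m_3(w)\geq 4$ must be produced either by an extra chord $v_iv_j$ in $G[N(v)]$ (boosting $e(G[N(v)])$ by one) or by a vertex in $N(w)$ that also lies in $N(v_k)$ for some $v_k \in N(v) \setminus \{w\}$ (a shared distance-$2$ vertex saving one in the union bound), giving $d_2(v) \leq 22$ in every sub-case.

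The main obstacle is the last sub-case of part $(c)$: the crude degree-sum bound falls one short of $22$, so the recovery requires a careful planar-cyclic case analysis at the $5(4^+)$-neighbour to produce either an additional chord inside $N(v)$ or a shared external distance-$2$ vertex.
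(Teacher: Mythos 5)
Your proposal is correct and follows the same minimality/reduction template as the paper: bound $d_2(v)$, delete $v$ and add a small set of chords inside $N(v)$ to obtain a proper planar $G'$ with $\Delta(G')\leq 8$, apply minimality, and extend. Parts (a), (b), and the $4$-neighbour and $m_3(v)=4$ sub-cases of (c) match the paper's proof directly (the paper's construction for (a) is the hub-spoke $G'=G-v+\{v_1v_2,v_1v_3,v_1v_4\}$; your chord-based alternative in (b) also works since each $v_i$'s degree in $G'$ does not increase). Where you add genuine value is the last sub-case of (c): the paper merely asserts $d_2(v)\leq 22$ for $m_3(v)\geq 3$ with a $5(4^+)$-neighbour, but as you correctly compute, the naive bound $d(v)+\sum(d(v_i)-1)-2e(G[N(v)])$ yields $23$ when $m_3(v)=3$. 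Your recovery argument is the right one: since the $5(4^+)$-neighbour $w$ is incident to at most one $4^+$-face in the cyclic fan at $v$, the constraint $m_3(w)\geq 4$ forces an additional triangle at $w$ whose third vertex is either another $v_j\in N(v)$ (an extra chord, dropping the bound by $2$) or a common neighbour of $w$ and some $v_k$ outside $N(v)\cup\{v\}$ (a double-counted second-neighbour, dropping the bound by $1$). Either way $d_2(v)\leq 22$, which is precisely what the paper's terse claim requires.
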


\begin{proof}
$(a)$ Let $m_3(v)=1$.  
Clearly, $v$ cannot have any $5(5)$-neighbour since $m_3(v)=1$.  
Now assume that $v$ has two $4$-neighbours. Then $d_2(v)\leq 22$ by Lemma~\ref{lem:d2v}.  
If we set $G'=G-v+\{v_1v_2, v_1v_3, v_1v_4\}$ for a $4$-neighbour $v_1$ of $v$, then $G'$ is proper with respect to $G$.  
By Remark~\ref{rem:proper}, this yields a contradiction.

\smallskip
$(b)$ Let $m_3(v)=2$.  
Suppose that $v$ has two neighbours that are $4$- or $5$-vertices.  
Then $d_2(v)\leq 22$ by Lemma~\ref{lem:d2v}, and the same reduction as in $(a)$ leads to a contradiction.

\smallskip
$(c)$ Let $m_3(v)\geq 3$.  
If $v$ has a $4$-neighbour or a $5(4^+)$-neighbour, then again $d_2(v)\leq 22$ by Lemma~\ref{lem:d2v}.  
As in the previous cases, this contradicts Remark~\ref{rem:proper}.
\end{proof}

\begin{lemma}\label{8lem:4-1-vertex-neighbours}
Let $v$ be a $4(1)$-vertex.  
If $m_4(v)\geq 2$, then $v$ has two $7^+$-neighbours.
\end{lemma}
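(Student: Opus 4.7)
Suppose for contradiction that $v$ has at most one $7^+$-neighbour, so at least three of the neighbours $v_1, v_2, v_3, v_4$ (in clockwise order) have degree at most $6$. Let $f_1, f_2, f_3, f_4$ be the faces incident to $v$, and without loss of generality assume $f_1 = v_1 v v_2$ is the unique $3$-face, so $m_4(v) \in \{2,3\}$.

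I would first bound $d_2(v)$. By the degree hypothesis,
\[
\sum_{i=1}^{4}(d(v_i)-1) \leq (8-1) + 3(6-1) = 22.
\]
The $3$-face contributes the edge $v_1v_2$ to $G[N(v)]$, which removes $2$ from the count, and each of the $m_4(v)$ four-faces $v_iv v_{i+1}x$ produces a common neighbour of $v_i$ and $v_{i+1}$ outside $N(v)\cup\{v\}$, so a distinct distance-$2$ vertex is double-counted. Therefore
\[
d_2(v) \leq 4 + \sum_{i=1}^{4}(d(v_i)-1) - 2|E(G[N(v)])| - m_4(v) \leq 4 + 22 - 2 - 2 = 22.
\]

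Next I construct a proper graph $G'$ with fewer vertices, following the template of Lemma \ref{7lem:4-1-vertex-neighbours}. For $m_4(v)=2$ I would set $G' = G - v + \{v_2v_3, v_3v_4, v_1v_4\}$, and for $m_4(v)=3$ I would set $G' = G - v + \{v_2v_3, v_1v_4\}$. In either case the added chords can be drawn inside the face obtained by deleting $v$, so $G'$ remains planar. A direct check of pairwise distances in $\{v_1,v_2,v_3,v_4\}$, using the existing edge $v_1v_2$ and, in the $m_4(v)=3$ case, the distance-$2$ connection between $v_3$ and $v_4$ through the common neighbour supplied by the fourth $4$-face, shows that $G'$ is proper with respect to $G$.

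By minimality, $G'$ admits a $2$-distance $23$-coloring $\varphi$; since $G'$ is proper, the only uncolored vertex is $v$, and since $d_2(v)\leq 22 < 23$ there is an available color for $v$. Extending $\varphi$ yields a $2$-distance $23$-coloring of $G$, contradicting the choice of $G$. The main obstacle is a careful case analysis checking planarity and the distance condition for $G'$ across the possible cyclic arrangements of the three-face and the $4$-faces around $v$; the counting argument giving $d_2(v)\leq 22$ is uniform and immediate from the contrapositive hypothesis.
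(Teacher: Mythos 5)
Your overall approach matches the paper's: bound $d_2(v)\leq 22$ using the hypothesis that at most one $v_i$ is a $7^+$-vertex, delete $v$, add a few edges to preserve distance-$2$ relations among $v_1,\dots,v_4$, and invoke minimality. The count giving $d_2(v)\leq 22$ is fine. The problem is your choice of $G'$ in the case $m_4(v)=2$.

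You lift the construction $G'=G-v+\{v_2v_3,v_3v_4,v_1v_4\}$ from the proof of Lemma~\ref{7lem:4-1-vertex-neighbours}, but the degree hypotheses in the two lemmas are not the same, and the borrowed construction is not safe here. In that construction $v_3$ and $v_4$ each gain two new edges and lose only the edge to $v$, so their degree in $G'$ is $d_G(\cdot)+1$. In Lemma~\ref{7lem:4-1-vertex-neighbours}(a) the argument proceeds under the assumption that all neighbours of $v$ are $6^-$-vertices (or, in the refinement, the single $7$-vertex is forced onto $v_1$ or $v_2$ by the $3$-face), so $v_3,v_4$ stay below $\Delta=7$. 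Here, under ``at most one $7^+$-neighbour,'' that single $7^+$-neighbour can perfectly well be $v_3$ or $v_4$, and it can be an $8$-vertex; then $d_{G'}(v_3)$ or $d_{G'}(v_4)$ becomes $9>\Delta(G)=8$. With $\Delta(G')=9$, minimality only yields a $2$-distance $(2\cdot 9+7)=25$-coloring of $G'$, not the $23$-coloring you need, so the extension step collapses. This is precisely why the paper's proof first observes that the hypothesis forces at least one of $v_1,v_2$ (the two vertices on the unique $3$-face) to be a $6^-$-vertex, calls it $v_1$, and then routes \emph{all} added edges through $v_1$, taking $G'=G-v+\{v_1v_3,v_1v_4\}$. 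Then only $v_1$ gains degree (at most $+1$, ending at $\leq 7\leq 8$), and this single construction also handles $m_4(v)=3$ without needing a separate case or the apex of a $4$-face. To repair your proof you would need to replace your $m_4(v)=2$ construction by one whose added edges all emanate from a vertex known to be a $6^-$-vertex; the paper's choice is the natural one.
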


\begin{proof}
Since $m_3(v)=1$, the vertex $v$ is incident to a $3$-face $f_1=v_1 v v_2$.
Assume that $m_4(v)\geq 2$.  
Suppose, for a contradiction, that $v$ has at most one $7^+$-neighbour.  
Then at least one of $v_1,v_2$ must be a $6^-$-vertex, say $v_1$.  
In particular, $d_2(v)\leq 22$ by Lemma~\ref{lem:d2v}.  
If we set $G'=G-v+\{v_1v_3, v_1v_4\}$, then $G'$ is proper with respect to $G$, a contradiction to Remark~\ref{rem:proper}.  
\end{proof}

\begin{lemma}\label{8lem:4-2-vertex-neighbours}
Let $v$ be a $4(2)$-vertex.
\begin{itemize}
\item[$(a)$] If $m_4(v)=0$, then $v$ has two $7^+$-neighbours that are neither $7(7)$- nor $8(8)$-vertices.
\item[$(b)$] If $m_4(v)=1$, then $v$ has either two $8(7^-)$-neighbours or three $7^+$-neighbours different from $7(7)$- and $8(8)$-vertices.
\item[$(c)$] If $m_4(v)=2$, then $v$ has one of the following:
 one $8(7^-)$- and three $7(6^-)$-neighbours, or
two $8(7^-)$- and one $7(6^-)$-neighbour, or
 three $8(7^-)$-neighbours.
\end{itemize}
\end{lemma}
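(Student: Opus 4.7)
The plan is to prove all three parts by the contradiction template used for Lemma~\ref{7lem:4-2-vertex-neighbours}. Label the neighbours of $v$ as $v_1,v_2,v_3,v_4$ in clockwise order, and let $f_i=v_ivv_{i+1}$, $f_j=v_jvv_{j+1}$ with $i<j$ be the two $3$-faces incident to $v$, so that either $i=1,j=2$ (adjacent $3$-faces meeting at $vv_2$) or $i=1,j=3$ (non-adjacent). Assuming the conclusion of the relevant part fails, I aim to derive $d_2(v)\le 22$; then taking $G'=G-v+\{v_2v_4\}$ in the adjacent subcase, respectively $G'=G-v+\{v_2v_3,v_1v_4\}$ in the non-adjacent one, yields a graph proper with respect to $G$, since these auxiliary edges ensure every pair among $v_1,\dots,v_4$ remains at distance at most $2$ in $G'$ (the only distance-$2$ pairs of $G$ affected by deleting $v$). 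By minimality, $G'$ admits a $2$-distance $23$-coloring, and the bound $d_2(v)\le 22$ leaves an available colour for $v$, contradicting the choice of $G$.

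The bound rests on the identity
\[
d_2(v)=\sum_{\ell=1}^{4}d(v_\ell)-2|E(G[N(v)])|-\sum_{w\in N_2(v)\setminus N(v)}\bigl(|N(w)\cap N(v)|-1\bigr).
\]
The two $3$-faces always contribute the edges $v_iv_{i+1}$ and $v_jv_{j+1}$ to $G[N(v)]$ (a term $-4$), and each $4$-face incident to $v$ forces one external vertex shared by a consecutive pair of $v_\ell$'s (a further $-m_4(v)$). A $7(7)$- or $8(8)$-neighbour $v_\ell$ is possible only when both faces around $vv_\ell$ are $3$-faces, which restricts $\ell=2$ in the adjacent subcase and is impossible in the non-adjacent one; when present, the all-triangular cyclic neighbourhood of $v_\ell$ produces two further external vertices shared with $v_{\ell-1}$ and $v_{\ell+1}$, contributing an additional $-2$. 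Writing $\sigma\in\{0,2\}$ for this extra saving,
\[
d_2(v)\le \sum_{\ell=1}^{4}d(v_\ell)-4-m_4(v)-\sigma.
\]

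In (a) with $m_4(v)=0$, the negation yields at least three neighbours of type $6^-$ or $\{7(7),8(8)\}$; since at most one can be $7(7)/8(8)$, either $\sum d(v_\ell)\le 26$ with $\sigma=0$, or $\sum d(v_\ell)\le 28$ with $\sigma=2$, both giving $d_2(v)\le 22$. In (b) with $m_4(v)=1$, the negation is ``at most one $8(7^-)$-neighbour and at most two $7^+$-neighbours different from $7(7)/8(8)$''; enumerating profiles $(a,b)=(\#8(7^-),\#7(6^-))$ with $a\le 1$ and $a+b\le 2$, together with $e\in\{0,1\}$ and $d=4-a-b-e$ of $6^-$-vertices, and using $\sum d(v_\ell)\le 8a+7b+8e+6d$, yields $d_2(v)\le 22$ in every case.

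The main obstacle is part (c) with $m_4(v)=2$, where the budget is tightest. Writing $(a,b,e,d)$ for the numbers of $8(7^-)$-, $7(6^-)$-, $\{7(7),8(8)\}$-, and $6^-$-neighbours respectively (with $e\in\{0,1\}$ and $e=0$ in the non-adjacent subcase), the negation of the three options of (c) is equivalent to $a\le 2$, $(a\le 1)\vee(b=0)$, and $\neg(a=1\wedge b=3)$, subject to $a+b+e+d=4$. Enumerating all such tuples and using $\sum d(v_\ell)\le 8a+7b+8e+6d$ with $\sigma=2e$ gives $d_2(v)\le (8a+7b+8e+6d)-6-2e\le 22$, with equality attained at a handful of tight profiles such as $(0,4,0,0)$, $(1,2,1,0)$, and $(2,0,1,1)$. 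This completes the argument.
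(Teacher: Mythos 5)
Your proposal is correct and takes essentially the same route as the paper: assume the relevant conclusion fails, argue $d_2(v)\le 22$, pass to $G'=G-v+\{v_2v_4\}$ or $G'=G-v+\{v_2v_3,v_1v_4\}$ depending on whether the two triangles are adjacent, and invoke minimality. The key structural observations match as well, in particular that a $7(7)$- or $8(8)$-neighbour can only occur at the vertex between two adjacent $3$-faces, hence at most once, and forces an extra saving.

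What you do differently is purely presentational. The paper asserts ``clearly $d_2(v)\le 22$'' in each configuration and lists a handful of boundary profiles, leaving the reader to see that all other profiles are dominated. You instead make the counting explicit through the identity
$d_2(v)=\sum_\ell d(v_\ell)-2|E(G[N(v)])|-\sum_w(|N(w)\cap N(v)|-1)$
and the derived bound $d_2(v)\le \sum d(v_\ell)-4-m_4(v)-\sigma$, then run a clean linear enumeration (which collapses to checking $2a+b+19\le 22$ in (b) and $2a+b+18\le 22$ in (c)). This is more systematic and arguably easier to audit; the one thing it adds in burden is the tacit claim that the $-4$, $-m_4(v)$, and $-\sigma$ savings never double-count. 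That holds because any coincidence among the external vertices only raises their multiplicity in the $\sum_w$ term, but a careful write-up should say this in a sentence, since the $\sigma=2$ estimate otherwise reads as asserting two genuinely new external vertices. A second minor stylistic point: you state the negation of (c) as $a\le 2$, $(a\le1)\vee(b=0)$, $\neg(a=1\wedge b=3)$; this is equivalent to the more transparent $(a=0\vee b\le 2)\wedge(a\le1\vee b=0)\wedge(a\le 2)$ under $a+b\le 4$, and phrasing it the second way would make the enumeration slightly easier to follow.
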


\begin{proof}
Let $f_i=v_i v v_{i+1}$ and $f_j=v_j v v_{j+1}$ with $i<j$ be the two $3$-faces incident to $v$.  
Notice first that  if $d_2(v)\leq 22$, then
we set $G'=G-v+\{v_2v_4\}$ when $(i,j)=(1,2)$; we set   
$G'=G-v+\{v_2v_3,\,v_1v_4\}$ when $(i,j)=(1,3)$. In each case, $G'$ is proper with respect to $G$. By Remark~\ref{rem:proper}, this yields a contradiction. Thus we further assume that $d_2(v)\geq 23$. 
We also note that if $v$ has a $7(7)$- or $8(8)$-neighbour, then at least two edges in $E(v)$ lie in two $3$-faces. Moreover,  $v$ cannot have two neighbours consisting of $7(7)$-, or $8(8)$-vertices since $m_3(v)=2$.

\smallskip
$(a)$  Let $m_4(v)=0$.  
Assume for a contradiction that $v$ has at most one $7^+$-neighbour that is not $7(7)$- or $8(8)$-vertices; equivalently, $v$ has three neighbours consisting of $6^-$-, $7(7)$-, or $8(8)$-vertices.  
Then $d_2(v)\leq 22$ by Lemma~\ref{lem:d2v}, a contradiction.  

\smallskip
$(b)$  Let $m_4(v)=1$.  
First suppose that $v$ has no  $7(7)$- or $8(8)$-neighbours.  
If $v$ has either  two $6^-$-neighbours and one $7$-neighbour, or  three $6^-$-neighbours, then $d_2(v)\leq 22$ by Lemma~\ref{lem:d2v}, giving a contradiction.  
Hence $v$ has either two $8(7^-)$-neighbours or three $7^+$-neighbours.
Now suppose $v$ has a $7(7)$- or $8(8)$-neighbour.  
If $v$ has two $6^-$-neighbours, then $d_2(v)\leq 21$, and if $v$ has one $6^-$- and one $7$-neighbour (other than $7(7)$), then $d_2(v)\leq 22$; in both cases we reach a contradiction as before.  
Thus the statement follows.

\smallskip
$(c)$  Let $m_4(v)=2$.  
We first show that $v$ has at least one $8(7^-)$-neighbour.  
Indeed, if all neighbours of $v$ were $7^-$- or $8(8)$-vertices, then $d_2(v)\leq 22$ by Lemma~\ref{lem:d2v}, a contradiction.  
Thus $v$ has at least one $8(7^-)$-neighbour.
If $v$ has exactly one $8(7^-)$-neighbour and exactly two $7(6^-)$-neighbours, then $d_2(v)\leq 22$.  
Likewise, if $v$ has exactly two $8(7^-)$-neighbours and no $7(6^-)$-neighbours, then again $d_2(v)\leq 22$.  
In each case we obtain a contradiction.  
Therefore, the remaining possibilities are those listed in the statement:  
$v$ has either one $8(7^-)$- and three $7(6^-)$-neighbours, or two $8(7^-)$- and one $7(6^-)$-neighbour, or three $8(7^-)$-neighbours.
\end{proof}

\begin{lemma}\label{8lem:4-3-vertex-neighbours}
Let $v$ be a $4(3)$-vertex.
\begin{itemize}
\item[$(a)$] If $m_4(v)=0$, then $v$ has one of the following:
one $8(7^-)$- and three $7(6^-)$-neighbours, or
 two $8(7^-)$- and one $7(6^-)$-neighbour, or
two $8(7^-)$- and two $8(8)$-neighbours, or
three $8(7^-)$-neighbours.

\item[$(b)$] If $m_4(v)=1$, then $v$ has either
two $8(7^-)$- and two $7(6^-)$-neighbours, or
 three $8(7^-)$-neighbours.
\end{itemize}
\end{lemma}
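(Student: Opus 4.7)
The plan is to mirror Lemma~\ref{7lem:4-3-vertex-neighbours} (the $\Delta=7$ analogue), with the bounds rescaled to the $23$-colour palette. Fix a $4(3)$-vertex $v$ with incident faces $f_1,f_2,f_3,f_4$, where $f_i=v_ivv_{i+1}$ is a $3$-face for $i\in[3]$ and $v_1,v_2,v_3,v_4$ are in clockwise order. My universal reduction, invoked in every sub-case, is as follows: whenever $d_2(v)\le 22$, set $G'=G-v+\{v_1v_4\}$ (adding the edge only if $v_1v_4\notin E(G)$). Then $G'$ is planar and proper with respect to $G$, and by minimality it admits a $2$-distance $23$-colouring; since $d_2(v)\le 22$, at least one colour remains free to extend to $v$, contradicting the choice of $G$.

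The task thus reduces to identifying the neighbour patterns that keep $d_2(v)\ge 23$. The base inequality is
\[
d_2(v)\le d(v_1)+d(v_2)+d(v_3)+d(v_4)-6,
\]
obtained by subtracting, for each triangle $f_i$, the neighbour of $v_i$ already counted on the opposite side of the triangle. Additional savings arise from each edge $v_iv_{i+1}$ that lies in a second $3$-face (one new shared neighbour of $v_i,v_{i+1}$ outside $N(v)$) and from any $4$-face incident to $v$. In particular, a $7(7)$- or $8(8)$-vertex at position $v_2$ or $v_3$ makes \emph{two} of the edges in $\{v_1v_2,v_2v_3,v_3v_4\}$ doubly-covered, while such a vertex at $v_1$ or $v_4$ makes only one of them doubly-covered. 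These savings will be assembled to push $d_2(v)$ down to $22$ whenever the neighbour profile deviates from the listed ones.

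For part~(a) ($m_4(v)=0$), I will eliminate in sequence: a $6^-$-neighbour; more than one ``badly placed'' $7(7)$- or $8(8)$-neighbour; and any $7(7^+)$- or $8(8)$-neighbour at $v_1$ or $v_4$ not compatible with one of the four allowed layouts. In each excluded case, the base inequality together with the above savings gives $d_2(v)\le 22$, so the universal reduction applies and yields a contradiction. What remains is precisely the four configurations in the statement. For part~(b) ($m_4(v)=1$), the incident $4$-face provides one further unit of saving in every sub-case, tightening each threshold by one and collapsing the enumeration to the two stated options.

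The step I expect to be the main obstacle is the sharp accounting for the ``two $8(8)$- and two $8(7^-)$-neighbours'' configuration of~(a): there, the two $8(8)$-vertices at $v_2,v_3$ make all three edges $v_1v_2,v_2v_3,v_3v_4$ doubly-covered, producing exactly three units of saving and leaving $d_2(v)$ balanced on the borderline $d_2(v)\ge 23$. To conclude correctly, I will need planarity of $G$ to guarantee that the three ``external'' common neighbours introduced by the three doubly-covered edges are genuinely distinct from one another (and from the $v_i$'s), so that no accidental coincidence drops the count below $23$ and reopens the reduction.
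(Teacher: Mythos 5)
Your universal reduction (delete $v$, add $v_1v_4$, invoke minimality whenever $d_2(v)\le 22$) is exactly the paper's mechanism, and the base count $d_2(v)\le \sum_i d(v_i)-6$ plus the per-doubly-covered-edge savings is the right tool. However, the first elimination you announce --- ``a $6^-$-neighbour'' --- does not go through and is in fact false. If, say, $v_1$ has degree $6$ while $v_2,v_3,v_4$ are all $8(7^-)$-vertices, the base count gives only $d_2(v)\le 6+8+8+8-6=24$, and none of the three edges $v_1v_2,v_2v_3,v_3v_4$ is forced to lie in two triangles, so no saving is guaranteed. Thus $d_2(v)$ may well be $23$ or $24$ and the reduction does not apply. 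Such a vertex does satisfy the lemma (it falls under ``three $8(7^-)$-neighbours'', whose fourth neighbour is left unconstrained), so there is no contradiction to be had; your proof would wrongly try to derive one. The lemma is an existence statement (\emph{at least} these many strong neighbours), not an exclusion of weak neighbours, and the paper's proof respects this by bounding $d_2(v)$ only in terms of how few $8(7^-)$-, $7(6^-)$- and $8(8)$-neighbours $v$ has, never asserting outright that $6^-$-neighbours are impossible.

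Two smaller points. First, the possibility of a $7(7)$- or $8(8)$-vertex at $v_1$ or $v_4$ that you plan to rule out never arises: $v_1$ and $v_4$ both lie on the $4^+$-face $f_4$, so they cannot have all their incident faces be triangles. Second, the distinctness worry you flag as ``the main obstacle'' is a non-issue in both directions: if two external common neighbours coincide (or coincide with some $v_j$), the overlap only \emph{increases} the savings in the counting $|N_2(v)|\le |N(v)|+\sum_i |N(v_i)\setminus N[v]|$, so $d_2(v)$ can only drop; and the lemma does not require configuration (iii) to be realizable with $d_2(v)\ge 23$ --- it only requires that every surviving vertex fall into one of the four listed patterns, which holds trivially if (iii) happens to be colourable. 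Your plan needs to be restructured around the number of $8(7^-)$-neighbours, as in the paper, rather than around forbidding weak neighbours.
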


\begin{proof}
Let $f_i = v_i v v_{i+1}$ for $i\in [3]$ be $3$-faces incident to $v$.
Notice first that if $d_2(v)\le 22$, then $G' = G - v + \{v_1 v_4\}$  a graph that is proper with respect to $G$, contradicting Remark~\ref{rem:proper}.  Thus we assume that $d_2(v)\ge 23$.

$(a)$  Let $m_4(v)=0$.  
We first show that $v$ has at least one $8(7^-)$-neighbour.  
Indeed, if all neighbours of $v$ were $7^-$- or $8(8)$-vertices, then $d_2(v)\le 22$ by Lemma~\ref{lem:d2v}, a contradiction. 
Thus $v$ has at least one $8(7^-)$-neighbour.

Suppose first that $v$ has exactly one $8(7^-)$-neighbour.  
If $v$ has no $7(6^-)$-neighbours, then $d_2(v)\le 22$.  
If it has exactly one $7(6^-)$-neighbour and two neighbours consisting of $6^-$-, $7(7)$-, or $8(8)$-vertices, then again $d_2(v)\le 22$.  
Similarly, if $v$ has two $7(6^-)$-neighbours and one neighbour consisting of $6^-$-, $7(7)$-, or $8(8)$-vertices, we again obtain $d_2(v)\le 22$.  
Each case contradicts Remark~\ref{rem:proper}.  
Hence when $v$ has exactly one $8(7^-)$-neighbour, it must have three $7(6^-)$-neighbours.

Suppose next that $v$ has exactly two $8(7^-)$-neighbours. 
If $v$ has no $7(6^-)$-neighbours, then $v$ must have two $8(8)$-neighbours; otherwise $d_2(v)\le 22$.  
Thus, when $v$ has two $8(7^-)$-neighbours, it has either one $7(6^-)$-neighbour or two $8(8)$-neighbours.

\smallskip
$(b)$  Let $m_4(v)=1$.  
If $v$ had at most one $8(7^-)$-neighbour, then $d_2(v)\le 22$ by Lemma~\ref{lem:d2v}, yielding a contradiction.  
Thus $v$ has at least two $8(7^-)$-neighbours.
If $v$ had two neighbours consisting of $6^-$-, $7(7)$-, or $8(8)$-vertices, then again $d_2(v)\le 22$.  
Similarly, if $v$ had two $8(7^-)$-neighbours, one $7(6^-)$-neighbour, and one neighbour consisting of $6^-$-, $7(7)$-, or $8(8)$-vertices, then $d_2(v)\le 22$. Both situations contradict with our assumption. 
\end{proof}

\begin{lemma}\label{8lem:4-4-vertex-neighbours}
Let $v$ be a $4(4)$-vertex.  
Then $v$ has either three $8(7^-)$- and one $7(5^-)$-neighbour, or four $8(7^-)$-neighbours.
\end{lemma}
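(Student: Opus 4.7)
Let $v$ be a $4(4)$-vertex with neighbours $v_1, v_2, v_3, v_4$ in clockwise order. Since $m_3(v) = 4$, the edges $v_i v_{i+1}$ exist for every $i \in [4]$ cyclically, so the neighbours of $v$ form a $4$-cycle. By Lemma~\ref{8lem:3-vertex} and Lemma~\ref{8lem:4-vertex-neighbours}(c), each $v_i$ is a $5(3^-)$-, $6$-, $7$-, or $8$-vertex. Because $v_1 v_2 v_3 v_4$ is already a $4$-cycle in $G$, the graph $G' := G - v$ is proper with respect to $G$: any two neighbours of $v$ remain at distance at most $2$ in $G'$ via this cycle. By the minimality of $G$, the graph $G'$ admits a $2$-distance $23$-coloring, so it suffices to show that $d_2(v) \leq 22$ whenever the conclusion fails, since then $v$ has an available colour.

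The key bound is
\[
d_2(v) \;\leq\; 4 + \sum_{i=1}^{4} \bigl(d(v_i) - 3\bigr) - s \;=\; \sum_{i=1}^{4} d(v_i) - 8 - s,
\]
where $s$ measures the over-count coming from vertices lying in more than one set $N(v_i) \setminus (\{v\} \cup N(v))$. Each edge $v_i v_{i+1}$ that lies in two $3$-faces contributes at least $1$ to $s$, because the apex of its outside triangle is a common neighbour of $v_i$ and $v_{i+1}$ distinct from $v$. Consequently, every $8(8)$- or $7(7)$-neighbour of $v$ forces both of its incident edges of the $4$-cycle to be in two $3$-faces (so it contributes $s \geq 2$), and every $7(6)$-neighbour forces at least one of them (so it contributes $s \geq 1$).

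Assume for contradiction that the conclusion fails. Then either (i)~$v$ has at most two $8(7^-)$-neighbours, or (ii)~$v$ has exactly three $8(7^-)$-neighbours and the fourth, say $v_1$, lies in $\{5(3^-), 6, 7(6), 7(7), 8(8)\}$. Case~(ii) is immediate by splitting on $d(v_1)$: if $d(v_1) \leq 6$, then $\sum d(v_i) \leq 30$; if $v_1$ is $7(6)$, then $\sum d(v_i) = 31$ and $s \geq 1$; if $v_1$ is $7(7)$ or $8(8)$, then $\sum d(v_i) \leq 32$ and $s \geq 2$. In every sub-case $d_2(v) \leq 22$. Case~(i) decomposes by $\sum d(v_i)$: values $\leq 30$ need no savings. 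The only configurations with $\sum d(v_i) \in \{31, 32\}$ contain at least one $8(8)$-neighbour (for instance, one $8(8)$, one $7$, and two $8(7^-)$ for $\sum = 31$; or two $8(8)$ and two $8(7^-)$ for $\sum = 32$), and in each such configuration the edges of the $4$-cycle forced to lie in two $3$-faces supply $s \geq 2$ or $s \geq 3$ respectively, yielding $d_2(v) \leq 22$.

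Once $d_2(v) \leq 22$ is verified in every failing case, extending the $2$-distance $23$-coloring of $G'$ to $v$ contradicts the minimality of $G$. The main tedium lies in case~(i): one must enumerate the admissible degree-type assignments for the four neighbours (with at most two $8(7^-)$) and confirm case by case that the savings from the forced outside triangles offset the growth of $\sum d(v_i)$ beyond $30$; this is a mechanical but somewhat finicky check involving the placement of the heavy $8(8)$- and $7(7)$-neighbours around the $4$-cycle.
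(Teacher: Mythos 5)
Your proposal is correct and rests on exactly the same mechanism the paper uses: delete $v$, note that the $4$-cycle $v_1v_2v_3v_4$ makes $G-v$ proper, and show $d_2(v)\leq 22$ in every failing configuration by counting $\sum_i d(v_i)-8$ and subtracting savings forced by edges of the $4$-cycle that lie in two $3$-faces. The paper organizes the failing cases differently and more directly: it shows at once that a $6^-$-, $7(6^+)$-, or $8(8)$-neighbour each forces $d_2(v)\leq 22$, and separately that two $7^-$-neighbours force $d_2(v)\leq 22$; the stated dichotomy then drops out immediately, with no enumeration over degree sums. Your reorganization by $\sum d(v_i)\in\{31,32\}$ and placement of heavy neighbours is sound (your "$s\geq 3$ when $\sum=32$" is slightly stronger than needed, but $s\geq 2$ already closes that case), and the sub-cases you defer as "mechanical but finicky" do check out; still, you would do well to note, as the paper's framing makes transparent, that the only neighbour types consistent with $d_2(v)\geq 23$ under the $4(4)$ structure are $8(7^-)$ and a single $7(5^-)$, which collapses the case split to two one-line observations.
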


\begin{proof}
Note that if $d_2(v)\le 22$, then $G' = G - v $   is proper with respect to $G$, contradicting Remark~\ref{rem:proper}.  Thus we assume that $d_2(v)\ge 23$.
If $v$ has a neighbour consisting of $6^-$-, $7(6^+)$-, or $8(8)$-vertex, then $d_2(v)\le 22$ by Lemma~\ref{lem:d2v}, a contradiction. 
Furthermore, if $v$ has two $7^-$-neighbours, then $d_2(v)\le 22$ once again.  
Thus all neighbours of $v$ must be $8(7^-)$- or $7(5^-)$-vertices, and at most one neighbour can be of type $7(5^-)$.  
Hence $v$ has either three $8(7^-)$- and one $7(5^-)$-neighbour, or four $8(7^-)$-neighbours.
\end{proof}

\begin{lemma}\label{8lem:5-4-vertex}
Let $v$ be a $5(4)$-vertex. 
\begin{itemize}
\item[$(a)$] If $m_4(v)=0$, then $v$ has two $6^+$-neighbours that are not $6(6)$-vertices.
\item[$(b)$] If $m_4(v)=1$, then $v$ has three $6^+$-neighbours that are not $6(6)$-vertices.
\end{itemize}
\end{lemma}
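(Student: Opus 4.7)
Let $v$ be a $5(4)$-vertex with neighbours $v_1,\ldots,v_5$ in clockwise order and faces $f_1,\ldots,f_5$ such that $f_i=v_ivv_{i+1}$ is a $3$-face for each $i\in[4]$. The remaining face $f_5$ contains the edges $vv_1$ and $vv_5$, with $\ell(f_5)\ge 5$ in part~(a) and $\ell(f_5)=4$ in part~(b). A key preliminary observation is that neither $v_1$ nor $v_5$ can be a $6(6)$-vertex: every edge at a $6(6)$-vertex lies in two $3$-faces, but $vv_1$ is adjacent to the two faces $f_1$ and $f_5$ with $f_5$ non-triangular, so $vv_1$ lies in only one $3$-face of $G$. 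Hence any $6(6)$-neighbour of $v$ must lie in $\{v_2,v_3,v_4\}$, and for each such $v_j$ the edges $v_{j-1}v_j$ and $v_jv_{j+1}$ of $G[N(v)]$ each lie in two $3$-faces, forcing auxiliary triangles $v_{j-1}v_jw$ and $v_jv_{j+1}w'$ with $w,w'\neq v$.

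The plan is the standard minimality argument: I assume the conclusion fails and aim to show $d_2(v)\le 22$. Once this is in hand, setting $G'=G-v+E^*$ with $E^*$ a small set of chords (e.g.\ $v_1v_5$ if not already present) chosen so that every former pair of $2$nd-neighbours of $v$ stays at distance $\le 2$, the graph $G'$ is proper with respect to $G$; by minimality it admits a $2$-distance $23$-coloring, and the bound $d_2(v)\le 22$ leaves at least one colour free for $v$, a contradiction. The estimate for $d_2(v)$ starts from
\[
d_2(v) \le d(v) + \sum_{i=1}^5 \bigl(d(v_i)-1\bigr) - 2\,|E(G[N(v)])|,
\]
further reduced by any vertex lying in the intersection of two or more $N(v_i)$'s outside $N(v)\cup\{v\}$. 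The four triangles $f_1,\ldots,f_4$ at $v$ yield $|E(G[N(v)])|\ge 4$; each $6(6)$-neighbour among $v_2,v_3,v_4$ contributes at least one further unit of saving (a fifth edge of $G[N(v)]$ when the auxiliary vertex $w$ coincides with some $v_k$, or a new shared $2$nd-neighbour otherwise); and in part~(b) the fourth vertex $u$ of the $4$-face $f_5=vv_1uv_5$ supplies one more unit as a shared vertex of $N(v_1)\cap N(v_5)$.

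For part~(a), the hypothesis that at most one neighbour is $6^+$-non-$6(6)$, combined with $v_1,v_5\notin 6(6)$, gives $\sum_i d(v_i)\le 31$, with equality only when, up to symmetry, $v_1$ has degree $8$, $v_5$ is a $5$-vertex and $v_2,v_3,v_4$ are all $6(6)$-vertices; in that case the four $6(6)$-induced auxiliary triangles save at least four further units, yielding $d_2(v)\le 31-8-4=19$, while all remaining subcases have $\sum_i d(v_i)\le 30$ and give $d_2(v)\le 22$ directly. For part~(b), the analogous extremum is $\sum_i d(v_i)=34$ (both $v_1,v_5$ of degree $8$, all of $v_2,v_3,v_4$ being $6(6)$); combining the triangle-savings, the four $6(6)$-savings and the contribution of $u$ gives $d_2(v)\le 34-8-4-1=21$. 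The main technical obstacle will be keeping the overcount bookkeeping uniform: one must verify across all subcases that each forced auxiliary vertex $w$ genuinely contributes at least one unit — either as a fresh shared $2$nd-neighbour or, when $w=v_k$, via the fifth edge it places in $G[N(v)]$ — and that these contributions do not cancel when several $6(6)$-neighbours are present. This verification parallels the analysis already carried out in Lemma~\ref{7lem:5-4-vertex}(d)--(f) for the $\Delta=7$ case.
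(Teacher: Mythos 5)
Your proof is correct and takes essentially the same approach as the paper: argue by contradiction, show $d_2(v)\le 22$, then reduce via $G'=G-v+\{v_1v_5\}$ and invoke minimality. The paper simply lists the subcases by the number of $6(6)$- versus $5^-$-neighbours and asserts the bound $d_2(v)\le 22$ in each, so your explicit degree-sum and overcount bookkeeping (including the key observation that $v_1,v_5$ cannot be $6(6)$-vertices) is a sound, more detailed way of carrying out the verification that the paper leaves implicit.
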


\begin{proof}
Let $f_i = v_i v v_{i+1}$ for $i\in[4]$.

\smallskip
$(a)$ 
Assume $m_4(v)=0$, and suppose that $v$ has at most one $6^+$-neighbour other than a $6(6)$-vertex.  
Thus all but one neighbour of $v$ are $6(6)$- or $5^-$-vertices. 
Observe that $v$ cannot have four $6(6)$-neighbours.
We now check all possibilities:  
three $6(6)$-neighbours;  
two $6(6)$- and one $5^-$-neighbour;  
one $6(6)$- and two $5^-$-neighbours;  
or four $5^-$-neighbours.  
In every case $d_2(v)\le 22$ by Lemma~\ref{lem:d2v}.  
Setting $G'=G-v+\{v_1v_5\}$ yields a graph that is proper with respect to $G$, contradicting Remark~\ref{rem:proper}.  
Therefore $v$ must have at least two $6^+$-neighbours distinct from $6(6)$-vertices.

\smallskip
$(b)$  
Assume $m_4(v)=1$, and suppose that $v$ has three neighbours $x,y,z$ that are either $5^-$- or $6(6)$-vertices.  
Checking the possibilities  
(all of $x,y,z$ are $5^-$-vertices; two $5^-$- and one $6(6)$-vertices; one $5^-$- and two $6(6)$-vertices; or all $6(6)$-vertices)  
gives $d_2(v)\le 22$ in every case by Lemma~\ref{lem:d2v}.  
This again contradicts Remark~\ref{rem:proper}.  
Thus $v$ must have at least three $6^+$-neighbours that are not $6(6)$-vertices.
\end{proof}

\begin{lemma}\label{8lem:5-vertex}
Let $v$ be a $5(5)$-vertex. 
\begin{itemize}
\item[$(a)$] If $v$  is adjacent to another $5(5)$-vertex, then $v$ has no $4$- or $5(4)$-neighbours.
\item[$(b)$]  $v$ has at most one neighbour that is a $4$- or $5(5)$-vertex.
\end{itemize}
\end{lemma}

\begin{proof}
$(a)$ 
Suppose that $v$ is a $5(5)$-vertex adjacent to a $5(5)$-vertex.  
If $v$ also has a $4$- or $5(4)$-neighbour, then $d_2(v)\le 21$ by Lemma~\ref{lem:d2v}.  
Setting $G'=G-v$ yields a graph that is proper with respect to $G$, contradicting Remark~\ref{rem:proper}.  
Thus $v$ has no such neighbours.

\smallskip
$(b)$ 
Suppose that $v$ has two neighbours consisting of  $4$- or  $5(5)$-vertices.  
If one of them is a $5(5)$-vertex, part~(a) already gives a contradiction.  
Otherwise, $v$ has two $4$-neighbours, implying $d_2(v)\le 22$ by Lemma~\ref{lem:d2v}.  
As above, this yields a contradiction.  
Thus $v$ has at most one such neighbour.
\end{proof}

\begin{lemma}\label{8lem:5-5-vertex}
Let $v$ be a $5(5)$-vertex. 
\begin{itemize}
\item[$(a)$] If $n_4(v)=1$, then $v$ has three $7^+$-neighbours that are not $7(7)$-vertices.
\item[$(b)$] If $n_4(v)=n_5(v)=0$, then $v$ has either three $8(7^-)$-neighbours or four $6^+$-neighbours different from $6(6)$-vertices such that two of which are $7^+$-vertices different from $7(7)$-vertex.
\item[$(c)$] If $n_4(v)=0$ and $n_5(v)=1$, then $v$ has either three $7^+$-neighbours different from $7(7)$-vertex or two $6(5^-)$- and two $7^+$-neighbours different from $7(7)$-vertex.
\item[$(d)$] If $n_4(v)=0$ and $n_5(v)=2$, then $v$ has three $7^+$-neighbours different from $7(7)$-vertex.
\end{itemize}
\end{lemma}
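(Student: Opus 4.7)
The plan is to follow the template established by the preceding structural lemmas in this subsection (cf.\ Lemma~\ref{7lem:5-5-vertex} for the $\Delta=7$ analogue). For each of the four parts, I would assume the stated conclusion fails, deduce $d_2(v)\le 22$ by a degree count around $v$, and then set $G'=G-v$. Observe that $G'$ is automatically proper with respect to $G$: the five triangles at $v$ make $v_1v_2v_3v_4v_5$ a cycle, so any two neighbours of $v$ remain within distance~$2$ in $G'$. Minimality then yields a $2$-distance $23$-coloring of $G'$, which extends to $v$ by picking an available colour, contradicting that $G$ is a counterexample.

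The workhorse bound is
\[
d_2(v)\;\le\;5+\sum_{i=1}^{5}\bigl(d(v_i)-3\bigr)=\sum_{i=1}^{5}d(v_i)-10,
\]
since each $v_i$ is adjacent to $v$, $v_{i-1}$, $v_{i+1}$ through the five triangles at $v$. Every edge $v_iv_{i+1}$ contained in two triangles trims $d_2(v)$ by one via a shared external neighbour; in particular each $6(6)$-, $7(7)$-, or $8(8)$-neighbour $v_j$ forces both of its cyclic-neighbour edges to lie in two triangles, shaving roughly two from the crude bound.

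For part $(a)$, negating the conclusion places at least two of $v$'s four non-$4$ neighbours into $\{5,6,7(7)\}$; a short case analysis over the multiset of these ``bad'' degrees---using the trims from $6(6)$- and $7(7)$-neighbours---gives $d_2(v)\le 22$ in every subcase. Parts $(b)$, $(c)$, and $(d)$ follow the same pattern, conditioning first on $n_5(v)\in\{0,1,2\}$ and then on which branch of the stated conclusion fails. In each leaf of the case tree, the forced presence of a $5^-$-, $6(6)$-, $7(7)$-, or $8(8)$-vertex in $N(v)$ beyond what the conclusion permits tightens the degree sum or the overlap count enough to reach $d_2(v)\le 22$.

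The main obstacle is the bookkeeping, especially in part $(b)$, where the conclusion is itself a disjunction of two configurations, so its negation splits into several subcases that must each be checked. A subtle point throughout is to correctly account for overlaps when two adjacent neighbours $v_j,v_{j+1}$ both force their shared edge to lie in two triangles---the resulting common external vertex is a single overlap, not two. Parts $(c)$ and $(d)$ are comparatively easier because each $5$-neighbour contributes only $2$ to $\sum_i(d(v_i)-3)$, so the bound $d_2(v)\le 22$ is quickly attained once the negation produces a $6$-, $7(7)$-, or $8(8)$-neighbour beyond the allowed count.
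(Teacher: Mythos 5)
Your proposal is correct and matches the paper's argument essentially step for step: negate the conclusion, bound $d_2(v)\le 22$ via a degree/overlap count, set $G'=G-v$ (which is proper because the five triangles at $v$ close the neighbourhood into a cycle), invoke minimality, and extend the coloring to $v$. The only minor imprecision is the blanket phrasing that the negation forces a ``$5^-$-, $6(6)$-, $7(7)$-, or $8(8)$-vertex'' in $N(v)$: for parts $(a)$, $(c)$, $(d)$ an $8(8)$-vertex is in fact a $7^+$-vertex different from a $7(7)$-vertex and hence a \emph{permitted} neighbour, so it is not what the negation forces there; the $8(8)$ case is genuinely relevant only in part $(b)$, where the conclusion refers to $8(7^-)$-neighbours. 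This does not affect the correctness of your case analysis, since in parts $(a)$, $(c)$, $(d)$ the ``bad'' set you actually use is $\{5,6,7(7)\}$, which is right.
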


\begin{proof}
Let $f_i = v_i v v_{i+1}$ for $i\in[5]$ in cyclic order.

$(a)$  
Assume $n_4(v)=1$ and that $v$ has at most two $7^+$-neighbours different from $7(7)$-vertices.  
Then $v$ has two neighbours consisting of  $5$-, $6$-, or $7(7)$-vertices, which forces $d_2(v)\le 22$ by Lemma~\ref{lem:d2v}.  
Removing $v$ yields a graph that is proper with respect to $G$, contradicting Remark~\ref{rem:proper}.  
Thus $v$ must have three such $7^+$-neighbours.

\smallskip
$(b)$  
Let $n_4(v)=n_5(v)=0$.  
Assume that $v$ has at most two $8(7^-)$-neighbours.  
If $v$ has at most three $6^+$-neighbours other than $6(6)$-vertices, then it must have two $6(6)$-neighbours.  
This forces  $d_2(v)\le 22$ by Lemma~\ref{lem:d2v}, a contradiction.  
Hence $v$ has four $6^+$-neighbours different from $6(6)$-vertices.
Among these four, if only one is a $7^+$-vertex different from $7(7)$-vertex, then again $d_2(v)\le 22$ by Lemma~\ref{lem:d2v}, yielding the same contradiction.  
Therefore at least two must be such $7^+$-vertices.

\smallskip
$(c)$ 
Let $n_4(v)=0$ and $n_5(v)=1$.  
If $v$ has at most one $7^+$-neighbour different from $7(7)$-vertex, then $d_2(v)\le 22$, a contradiction.  
Thus $v$ has at least two such $7^+$-neighbours.

Assume now that $v$ has exactly two $7^+$-neighbours different from $7(7)$-vertex.  
If $v$ has at most one $6(5^-)$-neighbour, then again $d_2(v)\le 22$ by Lemma~\ref{lem:d2v}.  
Thus $v$ must have two $6(5^-)$-neighbours in this configuration.

\smallskip
$(d)$  
Let $n_4(v)=0$ and $n_5(v)=2$.  
If $v$ has at most two $7^+$-neighbours different from $7(7)$-vertices, then $d_2(v)\le 22$, again contradicting Remark~\ref{rem:proper}.  
Hence $v$ has three such neighbours.
\end{proof}

\begin{lemma}\label{8lem:6-5-vertex}
Let $v$ be a $6(5)$-vertex. Then $v$ has at most four $5(4^+)$-neighbours.  
In particular, if $m_4(v)=1$ and $v$ has four $5(4^+)$-neighbours, then $v$ has two $8(6^-)$-neighbours.
\end{lemma}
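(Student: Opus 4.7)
The plan is the standard minimal-counterexample reduction, exactly parallel to the proofs of Lemmas \ref{6lem:6-5-vertex} and \ref{7lem:6-5-vertex}. Let $v$ be a $6(5)$-vertex with neighbours $v_1, \ldots, v_6$ in clockwise order and incident $3$-faces $f_i = v_i v v_{i+1}$ for $i \in [5]$, so the edges $v_1v_2, v_2v_3, \ldots, v_5v_6$ lie in $G[N(v)]$ and $f_6$ is the unique $4^+$-face incident to $v$.

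For the first assertion I would suppose for contradiction that $v$ has five $5(4^+)$-neighbours. The key structural observation is that a $5(4^+)$-neighbour $v_i$ with $i \in \{2,3,4,5\}$ has at least four of its five incident faces being $3$-faces, while the cyclic order at $v_i$ places $v_{i-1}, v, v_{i+1}$ consecutively; this forces at least one of the edges $v_{i-1}v_i, v_iv_{i+1}$ to be contained in a second $3$-face. Similarly, if $v_1$ (resp.\ $v_6$) is $5(4^+)$ then $v_1v_2$ (resp.\ $v_5v_6$) must lie in two $3$-faces. A direct covering argument over the five forcing constraints shows at least three distinct edges of $G[N(v)]$ are each in two $3$-faces, each contributing a shared second neighbour of $v$. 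Summing degrees then gives
\[
d_2(v) \leq 6 + \sum_{i=1}^{6} d(v_i) - 16 - 3 \leq 6 + 5 \cdot 5 + 8 - 19 = 20.
\]
Since five of the six neighbours are $5$-vertices, one of $v_1, v_6$ and one of $v_3, v_5$ must be $5$-vertices; after relabelling, setting $G' = G - v + \{v_1v_6, v_1v_3, v_3v_5\}$ (omitting already-present edges) yields a graph proper with respect to $G$, and minimality together with $d_2(v) \leq 20$ produces a $2$-distance $23$-colouring extendable to $v$, which is the desired contradiction.

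For the ``in particular'' clause, I would assume $m_4(v) = 1$ and $v$ has four $5(4^+)$-neighbours. The $4$-face $f_6$ provides a common neighbour of $v_1$ and $v_6$ (one overlap), and four $5(4^+)$-neighbours force at least two distinct edges of $G[N(v)]$ into two $3$-faces (two more overlaps). Suppose for contradiction that $v$ has at most one $8(6^-)$-neighbour; then at least one of the two non-$5(4^+)$-neighbours is either a $6^-$-, a $7$-, or an $8(7^+)$-vertex. In the $\leq 7$-degree sub-case the neighbour-degree sum already drops enough to give $d_2(v) \leq 22$, while in the $8(7^+)$ sub-case the same forcing principle applied at this neighbour produces one additional edge of $G[N(v)]$ in two $3$-faces, yielding one more overlap and again $d_2(v) \leq 22$. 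The same reduction $G' = G - v + \{v_1v_6, v_1v_3, v_3v_5\}$ closes the contradiction.

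The main technical obstacle is the edge-covering argument within $G[N(v)]$: one must verify that the forcing constraints from the five (respectively four) $5(4^+)$-neighbours cannot all be realised by fewer than three (respectively two) distinct edges, and that each forced edge actually supplies a distinct shared second neighbour. The asymmetry between boundary neighbours $v_1, v_6$ (which force a unique edge each) and the interior neighbours $v_2, v_3, v_4, v_5$ (which each force one of two edges) makes this count delicate. Verifying that $G'$ is proper with respect to $G$ is routine once the added edges are specified: every pair of neighbours of $v$ is at distance at most $2$ in $G'$ via the edges $v_1v_6, v_1v_3, v_3v_5$ together with the remaining edges of $G[N(v)]$.
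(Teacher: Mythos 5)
Your proof follows essentially the same route as the paper: bound $d_2(v)$ by counting edges of $G[N(v)]$ that lie in two $3$-faces, then delete $v$ and add patch edges, and extend a $2$-distance $23$-coloring of the smaller graph back to $G$. Your choice of $G'=G-v+\{v_1v_6,v_1v_3,v_3v_5\}$ (with endpoints chosen among the forced $5$-vertices so the new degrees stay at most $8$) is exactly the reduction used in the parallel statement for $\Delta=7$ (Lemma~\ref{7lem:6-5-vertex}), and it is the correct one: a $6(5)$-vertex's neighbourhood is a path $v_1\cdots v_6$, so after merely deleting $v$ pairs such as $v_1,v_4$ need not remain within distance $2$, i.e.\ $G-v$ alone need not be proper with respect to $G$. (The paper's printed proof of this lemma writes $G'=G-v$, which looks like a transcription slip from the $5(5)$-vertex lemmas, where $G[N(v)]$ is a $5$-cycle and deletion alone does preserve distance $\leq 2$.) Your overlap arithmetic, $d_2(v)\leq 6+\sum d(v_i)-16-3\leq 20$, is sound, and the identification that among the five $5(4^+)$-neighbours one of $v_1,v_6$ and one of $v_3,v_5$ must be $5$-vertices is what makes the added edges safe. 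For the second clause your case split (cheap degree sum when the non-$8(6^-)$ extra neighbour has degree at most $7$; an extra forced edge when it is an $8(7^+)$-vertex) is a correct and slightly more explicit version of what the paper asserts in one line; the $8(7^+)$ sub-case does work since (after checking the small number of placements) the edges forced by the four $5(4^+)$-neighbours together with the one forced by the $8(7^+)$-neighbour always require at least three distinct edges of $G[N(v)]$ in two $3$-faces.
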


\begin{proof}
Let $f_i = v_i v v_{i+1}$ for $i \in [5]$.
Assume first that $v$ has five $5(4^+)$-neighbours.  
Then at least three edges of $E(v)$ are contained in two $3$-faces, and Lemma~\ref{lem:d2v} gives $d_2(v)\le 20$.  
Since $v$ has five $5$-neighbours, one of $v_1,v_6$ must be a $5$-vertex, say $v_1$, and one of $v_3,v_5$ is also a $5$-vertex, say $v_3$.  
Let  
$G' = G - v + \{\,v_1v_6,\ v_3v_1,\ v_3v_5\,\}$.
Clearly, $G'$ is proper with respect to $G$, contradicting Remark~\ref{rem:proper}.  
Thus, $v$ has at most four $5(4^+)$-neighbours.

Now suppose that $m_4(v)=1$ and that $v$ has four $5(4^+)$-neighbours.  
Then there are at least two edges in $E(v)$ each contained in two $3$-faces.  
If $v$ has at most one $8(6^-)$-neighbour, then $d_2(v)\le 22$ by Lemma~\ref{lem:d2v}, and the same reduction as above yields a contradiction.  
Hence $v$ must have at least two $8(6^-)$-neighbours.
\end{proof}

\medskip

Let $v$ be a $7$-vertex (resp.\ an $8$-vertex), and let $x$ be a $5^-$-neighbour (resp.\ a $4^-$-neighbour) of $v$.  
If $v$ and $x$ have two common neighbours, (equivalently, if the edge $vx$ lies in two $3$-faces), then $x$ is called a \emph{support neighbour} of $v$.

\begin{lemma}\label{8lem:7-vertex}
Let $v$ be a $7^+$-vertex.  
\begin{itemize}
\item[$(a)$]  If $v$ has a support neighbour, then $d_2(v)\ge 23$.
\item[$(b)$] If $v$ is a $7(6)$-vertex, and has five $4(1^+)$-neighbours, then $v$ cannot have any $5$-neighbour. 
\item[$(c)$] If $v$ is a $7(6)$-vertex, and has a $5(5)$-neighbour, then $v$  has at most  four neighbours consisting of  $4$- or $5(4)$-vertices.
\item[$(d)$] If $v$ is a $7(6)$-vertex, and has two $5(5)$-neighbours, then $v$ has at most three neighbours consisting of  $4$- or $5(4)$-vertices.
\item[$(e)$] If $v$ is a $7(6)$-vertex, and has three $5(5)$-neighbours, then $v$ has at most one $4$-neighbour. In particular, $v$ has at most two neighbours consisting of  $4$- or $5(4)$-vertices. 
\item[$(f)$] If $v$ is a $7(7)$-vertex, then $v$ has at most five $5(4^+)$-neighbours.
\item[$(g)$] If $v$ is a $8(7)$-vertex, and has a support neighbour, then $v$ has at most six neighbours consisting of  $4(2^-)$- or $5(5)$-vertices. 
\end{itemize}
\end{lemma}

\begin{proof}
$(a)$ Let $d(v)=k$ and suppose that $v_1$ is a support neighbour of $v$.  
By definition, $v_1$ is adjacent to both $v_2$ and $v_k$.  
Assume for a contradiction that $d_2(v)\leq 22$.  
Consider
$
G' = G - v + \{v_1v_3, v_1v_4, \ldots, v_1v_{k-1} \}.
$
The graph $G'$ is proper with respect to $G$, contradicting Remark~\ref{rem:proper}.  
Thus, $d_2(v)\ge 23$.

\smallskip
$(b)$ 
Suppose that $v$ is a $7(6)$-vertex, and $v$ has five $4(1^+)$-neighbours. Clearly, $v$ has a support neighbour. If $v$ had a  $5$-neighbour, then it would be $d_2(v)\le 21$, contradicting~(a).  

\smallskip
$(c)$ 
Suppose that $v$ is a $7(6)$-vertex, and $v$ has a $5(5)$-neighbour. Clearly, $v$ has a support neighbour. If $v$ had five neighbours consisting of  $4$- or $5(4)$-vertices, then it would be $d_2(v)\le 22$, contradicting~(a).  

\smallskip
$(d)$ 
Suppose that $v$ is a $7(6)$-vertex with two $5(5)$-neighbours. Clearly, $v$ has a support neighbour. If $v$ had four neighbours consisting of  $4$- or $5(4)$-vertices, then  $d_2(v)\le 22$, contradicting part~(a).  

\smallskip
$(e)$ 
Suppose that $v$ is a $7(6)$-vertex with three $5(5)$-neighbours. Clearly, $v$ has a support neighbour. 
Since a $5(5)$-vertex is adjacent to at most one $5(5)$-vertex by Lemma~\ref{8lem:5-vertex}(b),  
$v$ can have at most one $4$-neighbour; otherwise $d_2(v)\le 22$, contradicting~(a).  
Similarly, $v$ cannot have three neighbours from $\{4\text{-vertices},\,5(4)\text{-vertices}\}$,  
as this would again imply $d_2(v)\le 22$, contradicting~(a). 

\smallskip
$(f)$ 
Assume,  for a contradiction, that $v$ has six $5(4^+)$-neighbours.
Clearly, at least one of such $5(4^+)$-neighbour is a support neighbour of $v$, 
and Lemma~\ref{lem:d2v} then gives $d_2(v)\le 21$,  
contradicting~(a).

\smallskip
$(g)$ 
Suppose that $v$ has seven such neighbours.
Then Lemma~\ref{lem:d2v} yields $d_2(v)\le 22$, again contradicting~(a).  
\end{proof}

\medskip

We now apply discharging to show that $G$ does not exist.  
We use the same initial charges as before, together with the following discharging rules. \medskip

\noindent  
\textbf{{Discharging Rules}} \medskip

We apply the following discharging rules.

\begin{itemize}
\setlength\itemsep{.2em}
\item[\textbf{R1:}] Every $3$-face  receives $\frac{1}{3}$  from each of its incident vertices.
\item[\textbf{R2:}] Every $5^+$-face gives $\frac{1}{5}$  to each of its incident vertices.
\item[\textbf{R3:}] Every $3$-vertex receives $\frac{1}{5}$  from each of its $8(6^-)$-neighbour.
\item[\textbf{R4:}] Every $4(4)$-vertex receives $\frac{1}{3}$  from each of its $7(5^-)$-neighbours.
\item[\textbf{R5:}] Every $6(5)$-vertex receives $\frac{1}{18}$ from each of its $8(6^-)$-neighbours.
\item[\textbf{R6:}] Every $6(5^-)$-vertex gives $\frac{1}{9}$  to each of its bad $5$-neighbours.
\item[\textbf{R7:}] Let $v$ be a $7(6^-)$-vertex. Then, $v$ gives
\begin{itemize}
\item[$(a)$]  $\frac{1}{6}$  to each of its $4(1)$-, $4(2)$- and $4(3)$-neighbours,
\item[$(b)$] $\frac{1}{9}$  to each of its $5(4)$-neighbours,
\item[$(c)$]  $\frac{2}{9}$  to each of its $5(5)$-neighbours,
\end{itemize}

\item[\textbf{R8:}] Every $7(7)$-vertex gives $\frac{1}{9}$  to each of its bad $5$-neighbours,

\item[\textbf{R9:}] Let $v$ be a $8(7^-)$-vertex. Then, $v$ gives
\begin{itemize}
\item[$(a)$]  $\frac{1}{6}$  to each of its $4(1)$-neighbours,
\item[$(b)$]  $\frac{1}{4}$  to each of its $4(2)$-neighbours,
\item[$(c)$]  $\frac{1}{3}$  to each of its $4(3^+)$-neighbours,
\item[$(d)$] $\frac{1}{9}$  to each of its $5(4)$-neighbours,
\item[$(e)$] $\frac{2}{9}$  to each of its $5(5)$-neighbours,
\end{itemize}

\item[\textbf{R10:}] Let $v$ be an $8(8)$-vertex. Then, $v$ gives
\begin{itemize}
\item[$(a)$]  $\frac{1}{9}$  to each of its $4(3)$-, and $5(4)$--neighbours,
\item[$(b)$] $\frac{2}{9}$  to each of its $5(5)$-neighbours,
\end{itemize}

\end{itemize}

\vspace*{1em}

\noindent
\textbf{Checking} $\mu^*(v), \mu^*(f)\geq 0$ for $v\in V(G), f\in F(G)$.\medskip

First we show that $\mu^*(f)\ge 0$ for each $f\in F(G)$. Given a face $f\in F(G)$,
if $f$ is a $3$-face, then it receives $\frac{1}{3}$ from each of its incident vertices by R1, and so $\mu^*(f)=\ell(f)-4+3\times \frac{1}{3}=0$. 
If $f$ is a $4$-face, then $\mu(f)=\mu^*(f)=0$.
Let $f$ be a $5^+$-face. By applying R2, $f$ sends $\frac{1}{5}$ to each of its incident vertices. 
It then follows that $\mu^*(f)\ge \frac{4\ell(f)}{5}-4\ge 0$.
Consequently, $\mu^*(f)\ge 0$ for each $f\in F(G)$.  \medskip

We now pick a vertex $v\in V(G)$ with $d(v)=k$. By Lemma~\ref{8lem:min-deg-3}, we have $k\ge 3$. \medskip

\textbf{(1).} Let $k=3$. The initial charge of $v$ is $\mu(v)=d(v)-4=-1$. By Lemma \ref{8lem:3-vertex}, $m_3(v)=0$ and $m_4(v)\leq 1$. If $m_4(v)=0$, then $v$ is adjacent to two $8(6^-)$-vertices by Lemma \ref{8lem:3-vertex}(b), and if $m_4(v)=1$, then $v$ is adjacent to three $8(6^-)$-neighbours by Lemma \ref{8lem:3-vertex}(c). Thus $\mu^*(v)\geq -1+3\times \frac{1}{5}+2\times\frac{1}{5}=0$ after $v$ receives  $\frac{1}{5}$ from each of its incident $5^+$-faces by R2, and $\frac{1}{5}$ from each of its $8(6^-)$-neighbours by R3. \medskip

\textbf{(2).} Let $k=4$. The initial charge of $v$ is $\mu(v)=d(v)-4=0$.  If $m_3(v)=0$, then $\mu^*(v)\geq 0$, since $v$ does not give any charge to its incident faces. Therefore, we may assume that  $1 \leq m_3(v) \leq 4 $. \medskip

\textbf{(2.1).}  Let $m_3(v)= 1$. If $m_4(v)\leq 1$, then $v$ is incident to two $5^+$-faces, and by R2, $v$ receives $\frac{1}{5}$ from each of those $5^+$-faces. Thus,  $\mu^*(v)\geq 2\times \frac{1}{5}-\frac{1}{3}>0$ after  $v$ sends $\frac{1}{3}$ to its incident $3$-face by R1.
If $m_4(v)\geq 2$, then  $v$ has two $7^+$-neighbours by Lemma \ref{8lem:4-1-vertex-neighbours}, which are clearly different from $7(7)$- and $8(8)$-vertices as $m_3(v)= 1$. It follows that $v$ receives $\frac{1}{6}$ from each of its $7(6^-)$- and $8(7^-)$-neighbours by R7(a), R9(a).  Thus,   $\mu^*(v)\geq  2\times\frac{1}{6}-\frac{1}{3}=0$ after $v$ sends $\frac{1}{3}$ to its incident $3$-face by R1.  \medskip

\textbf{(2.2).}  Let $m_3(v)=2$. Since $0\leq m_4(v) \leq 2$, we consider the following cases:

If $m_4(v)=0$, then  $v$ has  two $7^+$-neighbours different from $7(7)$- and $8(8)$-vertices by Lemma \ref{8lem:4-2-vertex-neighbours}(a). It follows that $v$ receives totally at least $2\times \frac{1}{6}$ from its $7(6^-)$- and $8(7^-)$-neighbours by R7(a), R9(b). In addition, $v$ is incident to two $5^+$-faces, and by R2, $v$ receives $\frac{1}{5}$ from each of those $5^+$-faces.  Thus,   $\mu^*(v)\geq  2\times\frac{1}{6}+2\times\frac{1}{5}-2\times\frac{1}{3}>0$ after  $v$ sends $\frac{1}{3}$ to each of its incident $3$-faces by R1. 

If $m_4(v)=1$, then  $v$ has either  two $8(7^-)$-neighbours or three $7^+$-neighbours different from $7(7)$- and $8(8)$-vertices by Lemma \ref{8lem:4-2-vertex-neighbours}(b). In each case, $v$ receives totally at least $\frac{1}{2}$ from its $7(6^-)$- and $8(7^-)$-neighbours by R7(a), R9(b). Also, $v$ receives $\frac{1}{5}$ from its incident $5^+$-face by R2. Thus,   $\mu^*(v)\geq  \frac{1}{2}+\frac{1}{5}-2\times\frac{1}{3}>0$ after  $v$ sends $\frac{1}{3}$ to each of its incident $3$-faces by R1. 

If $m_4(v)=2$, then $v$ has either  one  $8(7^-)$- and three $7(6^-)$-neighbours or  two  $8(7^-)$- and one $7(6^-)$-neighbours or three  $8(7^-)$-neighbours by Lemma \ref{8lem:4-2-vertex-neighbours}(c). In each case, $v$ receives totally at least $\frac{2}{3}=\min\{\frac{1}{4}+3\times\frac{1}{6}, 2\times\frac{1}{4}+\frac{1}{6}, 3\times\frac{1}{4} \}$ from its $7(6^-)$- and $8(7^-)$-neighbours by R7(a), R9(b). Thus,   $\mu^*(v)\geq  \frac{2}{3}-2\times\frac{1}{3}=0$ after  $v$ sends $\frac{1}{3}$ to each of its incident $3$-faces by R1.  \medskip

\textbf{(2.3).}  Let $m_3(v)=3$. Suppose first that $m_4(v)=0$. It then follows from Lemma \ref{8lem:4-3-vertex-neighbours}(a) that $v$ has either one $8(7^-)$- and three $7(6^-)$-neighbours  or two $8(7^-)$- and one $7(6^-)$-neighbours or two $8(7^-)$- and two $8(8)$-neighbours or three $8(7^-)$-neighbours. In each case, $v$ receives totally at least $\frac{5}{6}=\min\{\frac{1}{3}+3\times \frac{1}{6}, 2\times\frac{1}{3}+ \frac{1}{6},2\times\frac{1}{3}+2\times \frac{1}{9},3\times\frac{1}{3} \}$ from  its $7(6^-)$- and $8$-neighbours by R7(a), R9(c), R10(a). In addition, $v$ receives $\frac{1}{5}$ from its incident $5^+$-face by R2.  Thus,   $\mu^*(v)\geq  \frac{5}{6}+ \frac{1}{5} -3\times\frac{1}{3}>0$ after  $v$ sends $\frac{1}{3}$ to each of its incident $3$-faces by R1. Now, we suppose that  $m_4(v)=1$. By Lemma \ref{8lem:4-3-vertex-neighbours}(b), $v$ has either  two $8(7^-)$- and two $7(6^-)$-neighbours or three $8(7^-)$-neighbours. In both cases, $v$ receives totally at least $1=\min\{2\times\frac{1}{3}+2\times \frac{1}{6},3\times\frac{1}{3} \}$ from its $7(6^-)$- and $8(7^-)$-neighbours by R7(a), R9(c). Thus,  $\mu^*(v)\geq  1 -3\times\frac{1}{3}=0$ after  $v$ sends $\frac{1}{3}$ to each of its incident $3$-faces by R1. \medskip

\textbf{(2.4).}  Let $m_3(v)=4$. By Lemma \ref{8lem:4-4-vertex-neighbours}, 
$v$ has either   three $8(7^-)$- and one $7(5^-)$-neighbour or four $8(7^-)$-neighbours.  By applying R4 and R9(c), $v$ receives $\frac{1}{3}$ from each of its $7(5^-)$- and $8(7^-)$-neighbours.  Thus,   $\mu^*(v)\geq  4\times\frac{1}{3} -4\times\frac{1}{3}=0$ after  $v$ sends $\frac{1}{3}$ to each of its incident $3$-faces by R1.\medskip

\textbf{(3).} Let $k=5$. The initial charge of $v$ is $\mu(v)=d(v)-4=1$.  If $m_3(v)\leq 3$, then $\mu^*(v)\geq 1-3\times\frac{1}{3}= 0$ after  $v$ sends $\frac{1}{3}$ to each of its incident $3$-faces by R1. Therefore, we may assume that $4\leq m_3(v)\leq 5$. \medskip

\textbf{(3.1).}  Let $m_3(v)=4$. Suppose first that $m_4(v)=0$. By Lemma \ref{8lem:5-4-vertex}(a),  $v$ has two $6^+$-neighbours different from $6(6)$-vertex. 
It follows that $v$ receives  $\frac{1}{9}$ from each of its $6^+$-neighbours different from $6(6)$-vertex by R6-R10, and $\frac{1}{5}$ from its incident $5^+$-face by R2.  Thus,  $\mu^*(v)\geq  1+2\times\frac{1}{9}+\frac{1}{5} -4\times\frac{1}{3}>0$ after  $v$ sends $\frac{1}{3}$ to each of its incident $3$-faces by R1. 
Next we suppose that $m_4(v)=1$. By Lemma \ref{8lem:5-4-vertex}(b), $v$ has  three $6^+$-neighbours different from $6(6)$-vertex. So, $v$ receives  $\frac{1}{9}$ from each of its $6^+$-neighbours different from $6(6)$-vertex by R6-R10.  Thus,  $\mu^*(v)\geq  1+3\times\frac{1}{9} -4\times\frac{1}{3}=0$ after  $v$ sends $\frac{1}{3}$ to each of its incident $3$-faces by R1. \medskip

\textbf{(3.2).}  Let $m_3(v)=5$. We distinguish two subcases  according to the number of $4$-vertices adjacent to $v$ as follows. Recall that $0\leq n_4(v) \leq 1$ by Lemma \ref{8lem:5-vertex}(b). \medskip 

\textbf{(3.2.1).} Let $n_4(v)=0$.  If $n_5(v)=0$, then, by Lemma \ref{8lem:5-5-vertex}(b), $v$ has either three $8(7^-)$-neighbours or four $6^+$-neighbours different from $6(6)$-vertex, two of which are $7^+$-vertices different from $7(7)$-vertex. It follows from applying R6-R10 that $v$ receives totally at least $\frac{2}{3}=\min\{3\times \frac{2}{9},2\times \frac{1}{9}+2\times\frac{2}{9}\}$ from  its $6^+$-neighbours. Thus,   $\mu^*(v)\geq  1+\frac{2}{3} -5\times\frac{1}{3}=0$ after  $v$ sends $\frac{1}{3}$ to each of its incident $3$-faces by R1.

If $n_5(v)=1$, then, by Lemma \ref{8lem:5-5-vertex}(c), $v$ has either three $7^+$-neighbours different from $7(7)$-vertex or two $6(5^-)$- and  two $7^+$-neighbours different from $7(7)$-vertex. So, $v$ receives totally at least $\frac{2}{3}=\min\{3\times\frac{2}{9}, 2\times\frac{1}{9}+2\times\frac{2}{9} \}$ from  its $6^+$-neighbours by R6-R10. Thus,   $\mu^*(v)\geq  1+\frac{2}{3} -5\times\frac{1}{3}=0$ after  $v$ sends $\frac{1}{3}$ to each of its incident $3$-faces by R1.

If $n_5(v)=2$, then $v$ has three $7^+$-neighbours different from $7(7)$-vertex by Lemma \ref{8lem:5-5-vertex}(d). It follows that  $v$ receives $\frac{2}{9}$ from each of its $7^+$-neighbours different from $7(7)$-vertex by R7(c), R9(e), R10(b). Thus,   $\mu^*(v)\geq  1+3\times\frac{2}{9} -5\times\frac{1}{3}=0$ after  $v$ sends $\frac{1}{3}$ to each of its incident $3$-faces by R1.\medskip

\textbf{(3.2.2).} Let $n_4(v)=1$. By Lemma \ref{8lem:5-5-vertex}(a), $v$ has three $7^+$-neighbours different from $7(7)$-vertex. It follows that  $v$ receives $\frac{2}{9}$ from each of its $7^+$-neighbours different from $7(7)$-vertex by R7(c), R9(e), R10(b). So, $\mu^*(v)\geq  1+3\times\frac{2}{9} -5\times\frac{1}{3}=0$ after  $v$ sends $\frac{1}{3}$ to each of its incident $3$-faces by R1. \medskip

\textbf{(4).} Let $k=6$. The initial charge of $v$ is $\mu(v)=d(v)-4=2$. If $m_3(v)\leq 4$, 
then $\mu^*(v)\geq  2 -4\times \frac{1}{3}-6\times\frac{1}{9}=0$  after  $v$ sends $\frac{1}{3}$ to each of its incident $3$-faces by R1, and $\frac{1}{9}$ to each of its  $5(4^+)$-neighbours by R6.
On the other hand,  if $m_3(v)=6$, then $\mu^*(v)\geq  2 -6\times\frac{1}{3}=0$ after  $v$ sends $\frac{1}{3}$ to each of its incident $3$-faces by R1.
Therefore, we may assume that $m_3(v)=5$.  By Lemma \ref{8lem:6-5-vertex}, $v$ has at most four $5(4^+)$-neighbours. If  $v$ has at most three $5(4^+)$-neighbours, then $\mu^*(v)\geq  2 -5\times\frac{1}{3}-3\times\frac{1}{9}=0$ after  $v$ sends $\frac{1}{3}$ to each of its incident $3$-faces by R1, and $\frac{1}{9}$ to each of its $5(4^+)$-neighbours by R6. Next we assume that $v$ has exactly four $5(4^+)$-neighbours. 
If $m_4(v)=0$, then $v$ receives $\frac{1}{5}$ from its incident $5^+$-face by R2, and so $\mu^*(v)\geq  2+\frac{1}{5} -5\times \frac{1}{3}-4\times\frac{1}{9}>0$ after  $v$ sends $\frac{1}{3}$ to each of its incident $3$-faces by R1, and $\frac{1}{9}$ to each of its $5(4^+)$-neighbours by R6.
If $m_4(v)=1$, then $v$ has two $8(6^-)$-neighbours by Lemma \ref{8lem:6-5-vertex}, and so $v$ receives $\frac{1}{18}$ from each of its $8(6^-)$-neighbours by R5.  
Thus, $\mu^*(v)\geq  2+2\times\frac{1}{18} -5\times \frac{1}{3}-4\times\frac{1}{9}=0$ after  $v$ sends $\frac{1}{3}$ to each of its incident $3$-faces by R1, and $\frac{1}{9}$ to each of its $5(4^+)$-neighbours by R6. \medskip

\textbf{(5).} Let $k=7$. The initial charge of $v$ is $\mu(v)=d(v)-4=3$.  Notice first that if $m_3(v)\leq 3$, then we have $\mu^*(v)\geq  3 -3\times\frac{1}{3}-2\times\frac{1}{3}-5\times\frac{2}{9}>0$ after  $v$ sends $\frac{1}{3}$ to each of its incident $3$-faces by R1,  $\frac{1}{3}$ to each of its $4(4)$-neighbours by R4, and at most $\frac{2}{9}$ to each of its other neighbours by R7, where we note that $v$ can have at most two $4(4)$-neighbours due to $m_3(v)\leq 3$. Therefore, we may  assume that $4 \leq m_3(v) \leq 7$. \medskip

\textbf{(5.1).}  Let $m_3(v)=4$. Observe  that $v$ can have at most two neighbours consisting of  $4(4)$- or $5(5)$-vertices by Lemmas \ref{8lem:4-vertex-neighbours}(c) and \ref{8lem:5-vertex}(b). 
If $v$ has at most one $4(4)$-neighbour, then $\mu^*(v)\geq  3 -4\times\frac{1}{3}-\frac{1}{3}-6\times\frac{2}{9}=0$ after  $v$ sends $\frac{1}{3}$ to each of its incident $3$-faces by R1, $\frac{1}{3}$ to  its $4(4)$-neighbour by R4, and at most $\frac{2}{9}$ to each of its other neighbours by R7.
If $v$ has exactly two $4(4)$-neighbours, then $v$ is not adjacent to any $5(5)$-vertex as stated earlier, and so $\mu^*(v)\geq  3 -4\times\frac{1}{3}-2\times\frac{1}{3}-5\times\frac{1}{6}>0$ after  $v$ sends $\frac{1}{3}$ to each of its incident $3$-faces by R1, $\frac{1}{3}$ to each of its $4(4)$-neighbours by R4, and at most $\frac{1}{6}$ to each of its other neighbours by R7(a)-(b).\medskip

\textbf{(5.2).}  Let $m_3(v)=5$. By Lemma \ref{8lem:4-vertex-neighbours}(c), $v$ has at most two $4(4)$-neighbours. 
If $v$ has no $4(4)$-neighbour, then  $v$ is adjacent to at most three $5(5)$-neighbour by Lemma \ref{8lem:5-vertex}(b), and so $\mu^*(v)\geq  3 -5\times\frac{1}{3}-3\times\frac{2}{9}-4\times \frac{1}{6}=0$ after  $v$ sends $\frac{1}{3}$ to each of its incident $3$-faces by R1,  $\frac{2}{9}$ to each of its $5(5)$-neighbours by R7(c), and at most $\frac{1}{6}$ to each of its other neighbours by R7(a)-(b).
If $v$ has exactly one $4(4)$-neighbour, say $v_i$, then $v$ is adjacent to at most two $5(5)$-neighbours  by  Lemmas \ref{8lem:4-vertex-neighbours}(c) and \ref{8lem:5-vertex}(b). Moreover, the neighbours of $v_i$ belonging to $G[N(v)]$, say $v_j $ and $v_k$,  are neither $4$-vertices nor $5(4^+)$-vertices by Lemma \ref{8lem:4-vertex-neighbours}(c). Clearly, neither $v_j$ nor $v_k$ receives any charge from $v$.  Therefore,  $\mu^*(v)\geq  3 -5\times\frac{1}{3}-\frac{1}{3}-2\times\frac{2}{9}-2\times\frac{1}{6}>0$ after  $v$ sends $\frac{1}{3}$ to each of its incident $3$-faces by R1, $\frac{1}{3}$ to its $4(4)$-neighbour by R4, $\frac{2}{9}$ to each of its $5(5)$-neighbours by R7(c), and at most $\frac{1}{6}$ to each of its $4(1^+)$- and $5(4)$-neighbours different from $4(4)$-vertex by R7(a)-(b).
If $v$ has exactly two $4(4)$-neighbours, then $v$ is not adjacent to any $5(5)$-vertices by Lemma \ref{8lem:4-vertex-neighbours}(c). 
Moreover, similarly as above, $ v $ has at least three neighbours $ v_i $, $v_j $, and $ v_k $ that are neither $ 4 $-vertices nor $ 5(4^+) $-vertices by Lemma \ref{8lem:4-vertex-neighbours}(c), since $ v $ has two $ 4(4) $-neighbors. Clearly, none of $ v_i $, $ v_j $, or $ v_k $ receives any charge from \( v \).
Thus, $\mu^*(v)\geq  3 -5\times\frac{1}{3}-2\times\frac{1}{3}-2\times\frac{1}{6}>0$ after  $v$ sends $\frac{1}{3}$ to each of its incident $3$-faces by R1, $\frac{1}{3}$ to each of its $4(4)$-neighbours by R4, and at most $\frac{1}{6}$ to each of its $4(1^+)$- and $5(4)$-neighbours different from $4(4)$-vertex by R7(a)-(b).\medskip

\textbf{(5.3).}  Let $m_3(v)=6$. Notice first that if $v$ has at most four neighbours forming $4$- or $5$-vertices, then we have $\mu^*(v)\geq  3 -6\times\frac{1}{3}-4\times\frac{2}{9}>0$ after  $v$ sends $\frac{1}{3}$ to each of its incident $3$-faces by R1, and  at most $\frac{2}{9}$ to each of its $4(1)$-, $4(2)$-, $4(3)$-, and $5(4^+)$-neighbours by R7. Therefore we may further assume that $v$ has at least five neighbours forming $4$- or $5$-vertices.
By Lemma \ref{8lem:4-vertex-neighbours}, we conclude that a $4(1^+)$-vertex  has at most one $4$-neighbour. In addition, a  $5(5)$-vertex has at most one neighbour consisting of  $4$- or $5(5)$-vertices by Lemma \ref{8lem:5-vertex}(b).  This implies that $v$ has at most four $5(5)$-neighbours; in particular, $v$ has at most five $4(1^+)$-neighbours.

Suppose first that $v$ has no $5(5)$-neighbours. 
If $v$ has at most four $4(1^+)$-neighbours, then $\mu^*(v)\geq  3 -6\times\frac{1}{3}-4\times\frac{1}{6}-3\times \frac{1}{9}=0$ after  $v$ sends $\frac{1}{3}$ to each of its incident $3$-faces by R1,  $\frac{1}{6}$ to each of its $4(1^+)$-neighbours by R7(a), and $\frac{1}{9}$ to each of its $5(4)$-neighbours by R7(b). If $v$ has five $4(1^+)$-neighbours, then $v$ cannot have any $5(4)$-neighbours by Lemma \ref{8lem:7-vertex}(b). So,  $\mu^*(v)\geq  3 -6\times\frac{1}{3}-5\times\frac{1}{6}>0$ after  $v$ sends $\frac{1}{3}$ to each of its incident $3$-faces by R1, and $\frac{1}{6}$ to each of its $4(1^+)$-neighbours by R7(a).

Next we suppose that $v$ has exactly one $5(5)$-neighbour. By Lemma \ref{8lem:7-vertex}(c), $v$ has at most four $4$-neighbours.
If $v$ has at most two $4$-neighbours, then $\mu^*(v)\geq  3 -6\times\frac{1}{3}-\frac{2}{9}-2\times\frac{1}{6}-4\times\frac{1}{9}=0$ after  $v$ sends $\frac{1}{3}$ to each of its incident $3$-faces by R1,   $\frac{2}{9}$ to its $5(5)$-neighbour by R7(c),  $\frac{1}{6}$ to each of its $4(1^+)$-neighbours by R7(a), and  $\frac{1}{9}$ to each of its $5(4)$-neighbours by R7(b).
If $v$ has at least three $4$-neighbours, then $v$ is adjacent to at most one $5(4)$-vertices by Lemma \ref{8lem:7-vertex}(c), and so $\mu^*(v)\geq  3 -6\times\frac{1}{3}-\frac{2}{9}-4\times\frac{1}{6}-\frac{1}{9}=0$ after  $v$ sends $\frac{1}{3}$ to each of its incident $3$-faces by R1, $\frac{2}{9}$ to its $5(5)$-neighbour by R7(c), $\frac{1}{6}$ to each of its $4(1^+)$-neighbours by R7(a), and   $\frac{1}{9}$ to each of its $5(4)$-neighbours by R7(b).

Suppose now that $v$ has exactly two $5(5)$-neighbours.  Then, by Lemma \ref{8lem:7-vertex}(d), $v$ has at most three neighbours consisting of  $4$- or $5(4)$-vertices, and so $\mu^*(v)\geq  3 -6\times\frac{1}{3}-2\times\frac{2}{9}-3\times\frac{1}{6}>0$ after  $v$ sends $\frac{1}{3}$ to each of its incident $3$-faces by R1, $\frac{2}{9}$ to each of its $5(5)$-neighbours by R7(c), and  at most $\frac{1}{6}$ to each of its $4(1^+)$ and $5(4)$-neighbours by R7(a),(b).

Suppose that $v$ has exactly three $5(5)$-neighbours.  It follows that $v$ has at most one $4$-neighbour by Lemma \ref{8lem:7-vertex}(e). By the same reason, $v$ has at most two neighbours consisting of  $4$- or $5(4)$-vertices. Thus, we have  $\mu^*(v)\geq  3 -6\times\frac{1}{3}-3\times\frac{2}{9}-\frac{1}{6}-\frac{1}{9}>0$ after  $v$ sends $\frac{1}{3}$ to each of its incident $3$-faces by R1, $\frac{2}{9}$ to its $5(5)$-neighbour by R7(c), $\frac{1}{6}$ to each of its $4(1^+)$-neighbours by R7(a), and   $\frac{1}{9}$ to each of its $5(4)$-neighbours by R7(b).

Finally, suppose that $v$ has four $5(5)$-neighbours.  Then $v$ has neither $4$- nor $5(4)$-neighbours by Lemma \ref{8lem:5-vertex}(a),(b), and so $\mu^*(v)\geq  3 -6\times\frac{1}{3}-4\times\frac{2}{9}>0$ after  $v$ sends $\frac{1}{3}$ to each of its incident $3$-faces by R1,  $\frac{2}{9}$ to each of its $5(5)$-neighbours by R7(c). \medskip

\textbf{(5.4).}  Let $m_3(v)=7$. By Lemma \ref{8lem:7-vertex}(f), $v$ has at most five $5(4^+)$-neighbours, and so  $\mu^*(v)\geq  3-7\times \frac{1}{3}-5\times \frac{1}{9}>0$  after  $v$ sends $\frac{1}{3}$ to each of its incident $3$-face by R1, $\frac{1}{9}$ to each of its $5(4^+)$-neighbour by R8.\medskip

\textbf{(6).} Let $k=8$. The initial charge of $v$ is $\mu(v)=d(v)-4=4$.  Notice first that if $m_3(v)\leq 4$, then we have $\mu^*(v)\geq  4 -4\times\frac{1}{3}-8\times\frac{1}{3}=0$ after  $v$ sends $\frac{1}{3}$ to each of its incident $3$-faces by R1, and at most $\frac{1}{3}$ to each of its neighbours by R3, R5, R9. Therefore, we may  assume that $5 \leq m_3(v) \leq 8$. \medskip

\textbf{(6.1).}  Let $m_3(v)=5$. By Lemma \ref{8lem:4-vertex-neighbours}(c), a $4(3^+)$-vertex cannot be adjacent to any $4$-vertex. It follows that  $v$ has at most four $4(3^+)$-neighbours. Then  $\mu^*(v)\geq  4 -5\times\frac{1}{3}-4\times\frac{1}{3}-4\times\frac{1}{4}=0$ after  $v$ sends $\frac{1}{3}$ to each of its incident $3$-faces by R1, $\frac{1}{3}$ to each of its $4(3^+)$-neighbours by R9(c), and at most $\frac{1}{4}$ to each of its other neighbours by R3, R5, R9. \medskip


\textbf{(6.2).} Let $m_3(v)=6$. By Lemma \ref{8lem:4-vertex-neighbours}(c), $v$ has at most four $4(3^+)$-neighbours. 
Suppose first that $v$ has no $4(3^+)$-neighbour. Then $\mu^*(v)\geq  4-6\times \frac{1}{3}-8\times\frac{1}{4}=0$ after  $v$ sends $\frac{1}{3}$ to each of its incident $3$-faces by R1, and at most $\frac{1}{4}$ to each of  its  neighbours by R3, R5, R9. 

Next suppose that $v$ has exactly one $4(3^+)$-neighbour. It follows from  Lemmas \ref{8lem:3-vertex}(a) and \ref{8lem:4-vertex-neighbours}(c) that $v$ has at most six neighbours consisting of  $3$-, $4(2^-)$- or $5(4^+)$-vertices. Thus $\mu^*(v)\geq  4-6\times \frac{1}{3}- \frac{1}{3}-6\times\frac{1}{4}-\frac{1}{18}>0$ after  $v$ sends $\frac{1}{3}$ to each of its incident $3$-faces by R1, $\frac{1}{3}$ to its $4(3^+)$-neighbour by  R9(c),  at most $\frac{1}{4}$ to each of  its $3$-, $4(1)$-, $4(2)$- and $5(4^+)$-neighbours by R3, R9, and $\frac{1}{18}$ to each of its $6(5)$-neighbours by R5. 

Suppose now that $v$ has exactly two $4(3^+)$-neighbours. In such a case, $v$ has at most five neighbours consisting of  $3$-, $4(2^-)$- or $5(4^+)$-vertices by Lemmas \ref{8lem:3-vertex}(a) and \ref{8lem:4-vertex-neighbours}(c). Thus $\mu^*(v)\geq  4-6\times \frac{1}{3}-2\times \frac{1}{3}-5\times\frac{1}{4}-\frac{1}{18}>0$ after  $v$ sends $\frac{1}{3}$ to each of its incident $3$-faces by R1, $\frac{1}{3}$ to each of its $4(3^+)$-neighbours by  R9(c), at most $\frac{1}{4}$ to each of  its $3$-, $4(1)$-, $4(2)$- and $5(4^+)$-neighbours by R3, R9, and $\frac{1}{18}$ to each of its $6(5)$-neighbours by R5.

Suppose that $v$ has exactly three $4(3^+)$-neighbours. In such a case, $v$ can have at most three neighbours consisting of  $3$-, $4(2^-)$- or $5(4^+)$-vertices by Lemmas \ref{8lem:3-vertex}(a) and \ref{8lem:4-vertex-neighbours}(c). Thus $\mu^*(v)\geq  4-6\times \frac{1}{3}-3\times \frac{1}{3}-3\times\frac{1}{4}-2\times\frac{1}{18}>0$ after  $v$ sends $\frac{1}{3}$ to each of its incident $3$-faces by R1, $\frac{1}{3}$ to each of its $4(3^+)$-neighbours by  R9(c),  at most $\frac{1}{4}$ to each of  its $3$-, $4(1)$-, $4(2)$- and $5(4^+)$-neighbours by R3, R9, and $\frac{1}{18}$ to each of its $6(5)$-neighbours by R5. 

Finally suppose that $v$ has exactly four $4(3^+)$-neighbours. In such a case,  $v$ can have at most one neighbour consisting of  $3$-, $4(2^-)$- or $5(4^+)$-vertices by Lemmas \ref{8lem:3-vertex}(a) and \ref{8lem:4-vertex-neighbours}(c). Thus $\mu^*(v)\geq  4-6\times \frac{1}{3}-4\times \frac{1}{3}-\frac{1}{4}-3\times \frac{1}{18}>0$ after  $v$ sends $\frac{1}{3}$ to each of its incident $3$-faces by R1, $\frac{1}{3}$ to each of its $4(3^+)$-neighbours by  R9(c), and at most $\frac{1}{4}$ to its $3$-, $4(1)$-, $4(2)$- and $5(4^+)$-neighbours by R3, R9, and $\frac{1}{18}$ to each of its $6(5)$-neighbours by R5. \medskip

\textbf{(6.3).} Let $m_3(v)=7$. Let $f_i=v_ivv_{i+1}$ for $i\in [7]$. Clearly, $v$ has no $3$-neighbour by Lemma \ref{8lem:3-vertex}.

Let us first claim that $v$ has a support neighbour. By contradiction, assume that $v$ does not have. Then, $v_2,v_3,\ldots,v_7$ are $5^+$-vertices.
Recall that a $5(5)$-vertex has at most one $5(5)$-neighbour by Lemma \ref{8lem:5-vertex}(b). Thus we infer that $v$ has at most four $5(5)$-neighbour. If one of $v_1,v_8$ is a $5^+$-vertex, then $\mu^*(v)\geq  4-7\times \frac{1}{3}-\frac{1}{3}-4\times\frac{2}{9}-3\times \frac{1}{9}>0$ after  $v$ sends $\frac{1}{3}$ to each of its incident $3$-faces by R1,  at most $\frac{1}{3}$ to each of  its $4$-neighbours by  R9(a)-(c), $\frac{2}{9}$ to each of  its  $5(5)$-neighbours by R9(e),  $\frac{1}{9}$ to each of  its  $5(4)$-neighbours by R9(d). Thus, we assume that $v_1$ and $v_8$ are $4$-vertices.
By Lemma \ref{8lem:4-vertex-neighbours}, this infer that $v$ has at most three $5(5)$-neighbours.
It then follows that  $\mu^*(v)\geq  4-7\times \frac{1}{3}-2\times\frac{1}{3}-3\times\frac{1}{9}-3\times\frac{2}{9}=0$ after  $v$ sends $\frac{1}{3}$ to each of its incident $3$-faces by R1,  at most $\frac{1}{3}$ to each of  its $4$-neighbours by  R9(a)-(c), $\frac{1}{9}$ to each of  its  $5(4)$-neighbours by R9(d), $\frac{2}{9}$ to each of  its  $5(5)$-neighbours by R9(e). Consequently, we deduce that $v$ has a support neighbour, i.e., one of $v_2,v_3,\ldots,v_7$ is a $4^-$-vertex.


By Lemma \ref{8lem:4-vertex-neighbours}(c), a $4(3^+)$-vertex has no $4$- and $5(4^+)$-neighbours. So, we deduce that $v$ has at most four $4(3^+)$-neighbours. Indeed, if $v$ has  four such neighbours, then $v$ is not adjacent to any $4(2^-)$- or $5(4^+)$-vertices by Lemma \ref{8lem:4-vertex-neighbours}(c). Thus $\mu^*(v)\geq  4-7\times \frac{1}{3}-4\times \frac{1}{3}>0$ after  $v$ sends $\frac{1}{3}$ to each of its incident $3$-faces by R1, $\frac{1}{3}$ to each of its $4(3^+)$-neighbours by  R9(c). 
We may further assume that $v$ has at most three $4(3^+)$-neighbours. 

Suppose first that $v$ has no $4(3^+)$-neighbour. Note that $v$ has at most six neighbours consisting of  $4(2^-)$- or $5(5)$-vertices by Lemma \ref{8lem:7-vertex}(g). Moreover, if $v$ has six such  neighbours, then $v$ cannot be adjacent to any $5(4)$-vertex by Lemmas \ref{8lem:4-vertex-neighbours} and \ref{8lem:5-vertex}, and so $\mu^*(v)\geq  4-7\times \frac{1}{3}-6\times\frac{1}{4}>0$ after  $v$ sends $\frac{1}{3}$ to each of its incident $3$-faces by R1,  at most $\frac{1}{4}$ to each of  its $4(1)$-, $4(2)$- and $5(5)$-neighbours by R9(a), (b), (e).
If $v$ has at most five neighbours consisting of  $4(2^-)$- or $5(5)$-vertices, then $\mu^*(v)\geq  4-7\times \frac{1}{3}-5\times\frac{1}{4}-3\times\frac{1}{9}>0$ after  $v$ sends $\frac{1}{3}$ to each of its incident $3$-faces by R1,  at most $\frac{1}{4}$ to each of  its $4(1)$-, $4(2)$- and $5(5)$-neighbours by R9(a),(b),(e), and $\frac{1}{9}$ to each of  its  $5(4)$-neighbours by R9(d).

Next we suppose that $v$ has exactly one $4(3^+)$-neighbour. By Lemma \ref{8lem:4-vertex-neighbours}(c), $v$ has at least one neighbour different from $4$- and $5(4^+)$-vertices, which does not receive any charge from $v$. On the other hand, a $4(t)$-vertex with $1\leq t \leq 2$ has at most one neighbour consisting of  $4$- or $5(5)$-vertices by Lemma \ref{8lem:4-vertex-neighbours}(a),(b).  It then follows from Lemma \ref{8lem:4-vertex-neighbours}(c) and Lemma \ref{8lem:5-vertex}(b) that  $v$ has at most four neighbours consisting of  $4(1)$-, $4(2)$- or $5(5)$-vertices. 
Thus  $\mu^*(v)\geq  4-7\times \frac{1}{3}-\frac{1}{3}-4\times\frac{1}{4}-2\times\frac{1}{9}>0$ after  $v$ sends $\frac{1}{3}$ to each of its incident $3$-faces by R1, $\frac{1}{3}$ to each of its $4(3^+)$-neighbours by R9(c),  at most $\frac{1}{4}$ to each of  its $4(1)$-, $4(2)$- and $5(5)$-neighbours by R9(a),(b),(e), and $\frac{1}{9}$ to each of  its  $5(4)$-neighbours by R9(d).

Suppose that $v$ has exactly two $4(3^+)$-neighbours.  By Lemma \ref{8lem:4-vertex-neighbours}(c), a $4(3^+)$-vertex has neither $4$-neighbour nor $5(4^+)$-neighbour. This means that $v$ has at least two neighbours different from $4$- and $5(4^+)$-vertices, which do not receive any charge from $v$. 
Thus $\mu^*(v)\geq  4-7\times \frac{1}{3}-2\times \frac{1}{3}-4\times\frac{1}{4}=0$ after  $v$ sends $\frac{1}{3}$ to each of its incident $3$-faces by R1, $\frac{1}{3}$ to each of its $4(3^+)$-neighbours by R9(c), and  at most $\frac{1}{4}$ to each of  its $4(1)$-, $4(2)$- and $5(4^+)$-neighbours by R9(a),(b),(d),(e).

Finally suppose that $v$ has exactly three $4(3^+)$-neighbours. It follows  from Lemma \ref{8lem:4-vertex-neighbours}(c) that  $v$ has at least three neighbours different from $4$- and $5(4^+)$-vertices,  which do not receive any charge from $v$.  
Thus $\mu^*(v)\geq  4-7\times \frac{1}{3}-3\times \frac{1}{3}-2\times\frac{1}{4}>0$ after  $v$ sends $\frac{1}{3}$ to each of its incident $3$-faces by R1, $\frac{1}{3}$ to each of its $4(3^+)$-neighbours by R9(c),  at most $\frac{1}{4}$ to each of  its $4(1)$-, $4(2)$- and $5(4^+)$-neighbours by R9(a),(b),(d),(e).\medskip

\textbf{(6.4).}  Let $m_3(v)=8$. By Lemma \ref{8lem:5-vertex}(b),  $v$ has at most five $5(5)$-neighbours. 
If $v$ has at most four such neighbours, then  $\mu^*(v)\geq  4 -8\times\frac{1}{3}-4\times\frac{2}{9}-4\times\frac{1}{9}=0$ after  $v$ sends $\frac{1}{3}$ to each of its incident $3$-faces by R1, $\frac{2}{9}$ to each of its $5(5)$-neighbours by R10(b), and  at most $\frac{1}{9}$ to each of its $4(3)$- and $5(4)$-neighbours by R10(a). 
If $v$ has five $5(5)$-neighbours, then $v$ has neither $4$-neighbour nor $5(4)$-neighbour by Lemma \ref{8lem:5-vertex}(a). Thus,  $\mu^*(v)\geq  4 -8\times\frac{1}{3}-5\times\frac{2}{9}>0$ after  $v$ sends $\frac{1}{3}$ to each of its incident $3$-faces by R1, and $\frac{2}{9}$ to each of its $5(5)$-neighbours by R10(b).

\section*{Acknowledgement}

The author extends their gratitude to Kengo Aoki for pointing out a significant error in a previous proof of Lemma \ref{6lem:min-deg-4}.

\section*{Declarations}

\textbf{Conflict of interest} The author has no conflicts of interest to declare that are relevant to the content of this article. \medskip

\textbf{Availability of data and material} Data sharing not applicable to this article as no datasets were generated or analysed during the current study. \medskip



\end{document}